\renewcommand{\le}{\leqslant}
\renewcommand{\ge}{\geqslant}
\newcommand{\re}{\ \rhookdownarrow\ }
\newcommand{\matt}[1]{\left[\begin{smallmatrix}
 #1 \end{smallmatrix}\right]}
\newcommand{\matp}[1]{\!\begin{smallmatrix}
 #1\end{smallmatrix}\!}
\newcommand{\mat}[1]{\begin{bmatrix}
 #1  \end{bmatrix}}
\newcommand{\ma}[1]{\begin{matrix}
 #1  \end{matrix}}
\newcommand{\arr}[1]{\left[\begin{array}
 #1\end{array}\right]}
\newcommand{\arrr}[1]%
{\left[\begin{array}{ccc|cccc}
#1\end{array}\right]}
\newcommand{\mc}{\mathcal}
\newcommand{\mb}{\mathbb}
\newcommand{\wt}{\widetilde}
\newcommand{\D}{\Delta}
\newcommand{\da}{0^{\downarrow}}
\newcommand{\ua}{0^{\uparrow}}
\newcommand{\ra}{0^{\rightarrow}}
\newcommand{\la}{0^{\leftarrow}}
\renewcommand{\ll}{\langle}
\newcommand{\rr}{\rangle}
\newcommand{\un}{\underline}
\newcommand{\up}{\overline}
\DeclareMathOperator{\nullity}{nullity}
\newtheorem{theorem}{Theorem}[section]
\newtheorem*{fmtheorem}{Theorem I}
\newtheorem*{smtheorem}{Theorem II}
\newtheorem*{tmtheorem}{Theorem III}
\newtheorem*{fotheorem}{Theorem IV}
\newtheorem{lemma}{Lemma}[section]
\theoremstyle{definition}
\theoremstyle{remark}
\newtheorem{remark}{Remark}[section]
\newtheorem{example}{Example}[section]
\newcommand{\ze}{\fgestruckzero}
\newcommand{\ddotc}{
\text{\begin{picture}(9,6)
    \put(6,-3){$\cdot$}
\put(3,0){$\cdot$}
\put(0,3){$\cdot$}
\end{picture}}}
\newcommand{\ddotl}{
\text{\begin{picture}(0,0)
    \put(7,-3){$\cdot$}
\put(0,0){$\cdot$}
\put(-7,3){$\cdot$}
\end{picture}}}
\newcommand{\vdotc}{
\text{\begin{picture}(6,6)
    \put(1,-3){$\cdot$}
\put(1,0){$\cdot$}
\put(1,3){$\cdot$}
\end{picture}}}
\begin{document}

\title{A constructive proof of Pokrzywa's    theorem about perturbations of matrix pencils}

\author[fut]{Vyacheslav Futorny}
\ead{futorny@ime.usp.br}
\address[fut]{Department of Mathematics, University of S\~ao Paulo, Brazil}

\author[kly]{Tetiana Klymchuk}
\ead{tetiana.klymchuk@upc.edu}
\address[kly]{Universitat Polit\`{e}cnica de Catalunya, Barcelona, Spain}

\author[ser]{Vladimir V.
Sergeichuk\corref{cor}}
\ead{sergeich@imath.kiev.ua}
\address[ser]{Institute of Mathematics,
Kyiv, Ukraine}
\cortext[cor]{Corresponding author}

\author[shv]{Nadya Shvai}
\ead{nadiia.shvai@gmail.com}
\address[shv]{National University ``Kyiv-Mohyla Academy'',
Kyiv, Ukraine}

\begin{abstract}
Our purpose is to give new proofs of several known results about perturbations of matrix pencils.
Andrzej Pokrzywa (1986) described the closure of orbit of a Kronecker canonical pencil $A-\lambda B$ in terms of inequalities with pencil invariants. In more detail, Pokrzywa described all Kronecker canonical pencils $K-\lambda L$ such that each neighborhood of $A-\lambda B$ contains a pencil whose Kronecker canonical form is $K-\lambda L$. Another solution of this problem was given by Klaus Bongartz (1996) by methods of representation theory.

We give a direct and constructive proof of Pokrzywa's theorem.
We reduce its proof to the cases of matrices under similarity and of matrix pencils $P-\lambda Q$ that are direct sums of two indecomposable Kronecker canonical pencils. We
calculate the Kronecker forms of all pencils in a neighborhood of such a pencil $P-\lambda Q$.
In fact, we calculate the Kronecker forms of only those pencils that belong to  a miniversal deformation of $P-\lambda Q$, which is sufficient since all pencils  in a neighborhood of $P-\lambda Q$ are reduced to them by smooth strict equivalence
transformations.
\end{abstract}
\begin{keyword}
Matrix pencils \sep Kronecker canonical form\sep Perturbations\sep Orbit closures

\MSC 15A21  \sep 15A22
\end{keyword}

\maketitle

\section{Introduction}

For each Jordan matrix $A$, all Jordan matrices $J$ such that each neighborhood of $A$ contains a matrix that is similar to $J$ have been described by
Den  Boer and Thijsse \cite{den} and, independently, by Markus and Parilis \cite{mar}. Pokrzywa \cite{pok} extends their results to Kronecker canonical pencils $A-\lambda B$ ($A,B\in\mathbb C^{m\times n}$): he describes the set of all Kronecker canonical pencils $K-\lambda L$ such that each neighborhood of $A-\lambda B$ contains a pencil whose Kronecker canonical form is $K-\lambda L$. Pokrzywa formulates and proves his theorem in terms of inequalities for invariants of matrix pencils. A more abstract solution of this problem is given by Bongartz \cite[Section 5, Table I]{bon} by methods of representation theory (see also \cite{ben,bon_1,bon_2,bon_3}).

The main purpose of our paper is to give a direct and constructive proof of Pokrzywa's theorem.
Instead of pencils $A-\lambda B$, we consider  matrix  pairs $(A,B)$ in which both matrices have the same size. We study them up to \emph{equivalence transformations} $(SAR,SBR)$, in which $S$ and $R$ are nonsingular. The \emph{orbit} $\ll\mc A\rr$ of $\mc A=(A,B)$ is
the set of all pairs that are equivalent to $\mc A$. Let $P_{m,n}$ be the partially ordered set, whose elements are the orbits of pairs of $m\times n$ matrices with the following ordering: $\ll\mc A\rr\le\ll\mc B\rr$ if $\ll\mc A\rr$ is contained in the closure of $\ll\mc B\rr$. Thus,
\begin{equation}
\label{bbb}
\parbox[c]{0.84\textwidth}{$\ll\mc A\rr\le \ll\mc B\rr$ if and only if
a pair that is equivalent to
$\mc B$ can be obtained by an arbitrarily small perturbation of   $\mc A$.}
\end{equation}

An orbit
$\ll\mc B\rr$ \emph{immediately succeeds} $\ll\mc A\rr$ (many authors write that $\ll\mc B\rr$ \emph{covers} $\ll\mc A\rr$; see \cite{kag2}) if $\ll\mc A\rr< \ll\mc B\rr$ and there exists no $\ll\mc C\rr$ such that
$\ll\mc A\rr<\ll\mc C\rr< \ll\mc B\rr$.

The \emph{Hasse diagram} of $\mc P_{m,n}$ (which is also called the \emph{closure graph} of the orbits of pairs of $m\times n$ matrices) is the directed graph whose vertices are all elements of $\mc P_{m,n}$ and there is an arrow $\ll\mc A\rr\to \ll\mc B\rr$ if and only if  $\ll\mc B\rr$ immediately succeeds $\ll\mc A\rr$.

For example, each pair of $1\times 2$ matrices is equivalent to exactly one of the pairs
\[
([0\ 0],[0\ 0]),\quad ([1\ 0],[\lambda \ 0]),\quad
([0\ 0],[1\ 0]),\quad  ([1\ 0],[0\ 1]),
\]
in which $\lambda \in \mathbb C$ (see \eqref{iub}). The Hasse diagram of $\mc P_{1,2}$ is
\begin{equation}\label{g1}
\begin{split}\qquad\qquad
\xymatrix@R=17pt@C=8pt{
&\ll([1\ 0],[0\ 1])\rr&
                    \\
\ll([1\ 0],[\lambda \ 0])\rr\ar[ur]
 && \ll([0\ 0],[1\ 0])\rr\ar[ul]
                    \\
&\ll([0\ 0],[0\ 0])\rr\ar[ul]\ar[ur]&
} \end{split}
\end{equation}
By \eqref{bbb}, for each arrow $\ll\mc A\rr\to \ll\mc B\rr$ there exists an arbitrarily small perturbation $\Delta \mc A$ such that $\mc A+\Delta \mc A$ is equivalent to $\mc B$; we locate $\Delta \mc A$  on the corresponding arrow in \eqref{g1}:
\begin{equation}\label{g3}
\begin{split}\qquad\qquad
\xymatrix@R=30pt@C=8pt{
&\ll([1\ 0],[0\ 1])\rr&
                    \\
\ll([1\ 0],[\lambda \ 0])\rr\ar[ur]|{([0\ 0],[0\ \varepsilon ])}
 && \ll([0\ 0],[1\ 0])\rr\ar[ul]|{([0\ \varepsilon ],[0\ 0])}
                    \\
&\ll([0\ 0],[0\ 0])\rr\ar[ul]|{([\varepsilon \ 0 ],[\varepsilon \lambda\ \, 0])}\ar[ur]|{([0\ 0],[\varepsilon  \ 0])}&
} \end{split}
\end{equation}
in which $\varepsilon$ is an arbitrarily small complex number.

The Hasse diagram of $\mc P_{2,3}$ is given in \cite{f-k}. The software StratiGraph \cite{f-j-k,K-J,str} constructs the Hasse diagram of  $\mc P_{m,n}$ for arbitrary $m$ and $n$. The Hasse diagrams for congruence classes of $2\times 2$ and $3\times 3$ complex matrices and for *congruence classes of $2\times 2$ complex matrices are constructed in \cite{d-f-k,f-k-s}. Orbit closures of matrix pencils are also studied in \cite{hin}.

The main theorems of the paper are formulated in Section \ref{sss3}. \emph{Theorem I} from Section \ref{sss3} is another form of Pokrzywa's theorem; it gives
replacements (i)--(vi) of direct summands such that a Kronecker pair $\mc A$ is transformed to a Kronecker pair $\mc B$ by a sequence of replacements of types (i)--(vi) if and only if $\ll\mc A\rr<\ll\mc B\rr$. Those replacements of $\mc A$ by $\mc B$
of types (i)--(vi) for which $\ll\mc B\rr$ is an immediate successor of $\ll\mc A\rr$ are given in \emph{Theorem II}, which is derived from Theorem I in Section \ref{sdsx}.

Two main tools in our proof of Theorem I are the following:
\begin{itemize}
  \item \emph{Theorem \ref{th}} from Section \ref{nfl}, which states that each immediate successor of the orbit of a Kronecker pair $\mc A$ is the orbit of a pair that is obtained by an arbitrarily small perturbation of only one subpair of $\mc A$ of two types: a direct sum of two indecomposable direct summands of $\mc A$, or a pair $(I,J)$, in which $J$ is a Jordan matrix with a single eigenvalue $\lambda \ne 0$. Thus, it is sufficient to prove Theorem I for such pairs $(I,J)$ and for direct summands of two indecomposable Kronecker pairs; we do this in Sections \ref{pairs} and \ref{jord}.

  \item The \emph{miniversal deformation} of a matrix pair under equivalence that is given by Garc\'{\i}a-Planas and Sergeichuk
\cite{gar_ser}; it is presented in Section \ref{prel}. In Section \ref{pairs}, we calculate the Kronecker forms of pairs that are obtained by arbitrary small perturbations of $(P,Q)$. In fact, we calculate the Kronecker forms of only those pairs of simple form that belong to  the miniversal deformation of $(P, Q)$, which is sufficient since all pencils  in a neighborhood of $(P,Q)$ are reduced to them by smooth equivalence
transformations.
\end{itemize}

\section{Main theorems}
\label{sss3}

All matrices that we consider are complex matrices and both matrices in each matrix pair have the same size.
For each positive
integer $n$, we define
the matrices
\begin{equation*}       
L_{n}:=\mat{
         1&0&&0\\
         &\ddots&\ddots\\
         0&&1&0},\quad
R_{n}:=\mat{
         0&1&&0\\
         &\ddots&\ddots\\
         0&&0&1}
\quad
\text{($(n-1)$-by-$n$)},
\end{equation*}
\[
J_n(\lambda) :=
\mat{\lambda&1&&0\\
  &\lambda&\ddots&\\
  &&\ddots&1\\
0&&&\lambda}\quad
\text{($n$-by-$n$,
}\lambda\in\mathbb C).
\]
(We denote by $0_{pq}$ the nonzero matrix of size $p\times q$ for all nonnegative integers $p$ and $q$. In particular, $L_1=R_1=0_{01}$. If $M$ is an $m\times n$ matrix, then $M\oplus 0_{0q}=\mat{M&0_{mq}}$ and
$M\oplus 0_{p0}=\matt{M\\0_{pn}}$.)

We also define the matrices
\[
0^{\nwarrow}:=\mat{1\,0\dots0\\0},\ \
0^{\nearrow}:=\mat{0\dots0\,1\\0},\ \
0^{\swarrow}:=\mat{0\\1\,0\dots0},\ \
0^{\searrow}:=\mat{0\\0\dots0\,1},
\]
whose sizes will be clear from the context.

The matrix pairs
\begin{equation}\label{iub}
\begin{gathered}
\mc L_n:=(L_n,R_n),\qquad \mc L_n^T:=(L_n^T,R_n^T),
\\
{\mc D}_n(\lambda ):=
  \begin{cases}
(I_n,J_n(\lambda))& \text{if $\lambda \in\mathbb C$} \\
(J_n(0),I_n) & \text{if $\lambda=\infty$}
  \end{cases}
\end{gathered}
\end{equation}
are called \emph{indecomposable Kronecker pairs}. Leopold Kronecker proved that each matrix pair $\mc A$ is equivalent to a direct sum of indecomposable Kronecker pairs. This direct sum  is called the \emph{Kronecker canonical form} of $\mc A$; it is determined by $\mc A$ uniquely, up to permutations of direct summands.

Pokrzywa describes the closures of orbits of Kronecker canonical pencils in Theorem 3 from \cite{pok}, which is
formulated and proved in the form of systems of inequalities for invariants of matrix pencils with respect to strict equivalence (see also \cite[Theorem 2.1]{de} and \cite[Theorem 3.1]{kag2}).  However, he formulates Lemma 5 from \cite{pok} in the form of replacements of direct summands of Kronecker pairs; such replacements are also given in \cite[Section 5.1]{bol} and \cite[Theorem 2.2]{dmy}. In the proof of Lemma 5 from \cite{pok}, Pokrzywa also gives arbitrarily small perturbations that ensure these replacements.
We describe the closures of orbits of Kronecker pairs in the following theorem.

\begin{fmtheorem}\label{thg}
Let $\mc A$ and $\mc B$ be nonequivalent Kronecker pairs. Then $\ll\mc A\rr<\ll \mc B\rr$ if and only if $\mc B$ can be obtained from $\mc A$ by permutations of direct summands and replacements  of direct summands of types {\rm(i)--(vi)} listed below, in which $m,n\in\{1,2,\dots\}$ and $\lambda \in\mb C\cup\infty$. The notation $\mc P\!\!\! \re\!\!\! \mc Q$ means that $\mc P$ is  replaced by $\mc Q$. For each replacement $\mc P\!\!\! \re\!\!\! \mc Q$, we also give a pair that is obtained by an arbitrarily small perturbation (which is defined by an arbitrary nonzero complex number $\varepsilon $) of $\mc P$ and whose Kronecker canonical form is $\mc Q$.
\begin{itemize}
\item[\rm(i)]
$\mc L_m^T \oplus
\mc L_n^T\re\mc L_{m+1}^T \oplus
\mc L_{n-1}^T$ in which $m+2\le n$, via  the pair
\[
\left(
\mat{L^T_m&0\\0&L^T_n},
\mat{R^T_m&\varepsilon 0^{\nwarrow}
\\0&R^T_n}
\right),
\]
which is obtained by a perturbation of $\mc L_m^T \oplus
\mc L_n^T$.

  \item[\rm(ii)]
$\mc L_m \oplus
\mc L_n \re
\mc L_{m+1} \oplus
\mc L_{n-1}$ in which $m+2\le n$, via
\[
\left(
\mat{L_m&0\\0&L_n},
\mat{R_m&0
\\\varepsilon 0^{\nwarrow}&R_n}
\right).
\]

\item[\rm(iii)]
$\mc L_m^T\oplus {\mc D}_n(\lambda ) \re
\mc L_{m+1}^T \oplus \mc D_{n-1}(\lambda )$ (the summands ${\mc D}_0(\lambda )$ are omitted), via
\[
\setlength{\arraycolsep}{2pt}
\left(
 \mat{
L^T_m&0\\ 0&I_n},\,
\mat{
R^T_m&\varepsilon 0^{\nearrow}
\\ 0&J_n(\lambda )}
 \right)
\,\text{if }\lambda \in\mathbb C,
           \
\left(
 \mat{
L^T_m&\varepsilon 0^{\searrow}\\ 0&J_n(0)},\,
\mat{
R^T_m&0\\ 0&I_n}
 \right)
\,\text{if }\lambda =\infty.
\]

\item[\rm(iv)]
$\mc L_m\oplus {\mc D}_n(\lambda ) \re
\mc L_{m+1} \oplus \mc D_{n-1}(\lambda )$, via
\[
\setlength{\arraycolsep}{2pt}
\left(
 \mat{
L_m&0\\ 0&I_n},\,
\mat{
R_m&0
\\ \varepsilon 0^{\nwarrow}&J_n(\lambda )}
 \right)\,
 \text{if }\lambda \in\mathbb C,
           \
\left(
 \mat{
L_m&0\\ \varepsilon 0^{\nearrow}&J_n(0)},\,
\mat{
R_m&0\\ 0&I_n}
 \right)\,
\text{if }\lambda =\infty.
\]

\item[\rm(v)]
$\mc D_m(\lambda)\oplus\mc D_n(\lambda)\re\mc D_{m-1}(\lambda)
\oplus \mc D_{n+1}(\lambda)$ in which $m\le n$, via
\[
\setlength{\arraycolsep}{2pt}
\left(I_{m+n},\mat{J_m(\lambda )&\varepsilon 0^{\nwarrow}\\0&J_n(\lambda )}\right)\,\text{if }\lambda \in\mathbb C,
                        \
\left(\mat{J_m(0)&\varepsilon 0^{\nwarrow}\\0&J_n(0)},I_{m+n}\right)
\,\text{if }\lambda =\infty.
\]

\item[\rm(vi)]
$\mc L_m^T\oplus
\mc L_n \re
{\mc D}_{r_1} (\mu_1)\oplus\dots\oplus {\mc D}_{r_k}(\mu_k)$, in which $\mu_1,\dots,\mu_k\in{\mathbb C}\cup\infty$ are distinct and $r_1+\dots +r_k=m+n-1$, via
\begin{equation}\label{kyb}
\arraycolsep=0.24em
\left(
 \left[\begin{array}{ccc|cccc}
1&&&&&&\alpha _1\\
0&\ddots&&&&&\alpha _2 \\[-3pt]
&\ddots&1&&
\text{\raisebox{7pt}[0pt][0pt]{$0$}}
&&\vdots\\
&&0&&&&\alpha _m\\\hline
&&&1&\,0\!\!\!\\
&0&&&\ddots&\ddots\\
&&&&&1\!\!&\,0\!\!
 \end{array}
              \right],\,
\left[\begin{array}{ccc|cccc}
0&&&\beta_1&\beta_2&\dots &\beta _n\\
1&\ddots&&&&&\\[-3pt]
&\ddots&0& &&
\!\!\!\!\!\!0&\\
&&1&&&&\\\hline
&&&0&1\\
&0&&&\ddots&\ddots\\
&&&&&0\!\!&1\!\!
 \end{array}
 \right]
 \right),
\end{equation}
in which
\begin{equation}\label{lyv}
(-\beta _1,\dots,-\beta _n,\alpha _1,\dots,\alpha _m):=
\varepsilon (c_0,\dots,c_{r-1},1,0,\dots,0),
\end{equation}
$\varepsilon$ is any nonzero complex number, and $c_{0},\dots,c_{r-1}$ are defined by
\begin{equation}\label{amt}
c_0+c_1x+\dots+c_{r-1}x^{r-1}+x^r
:=\prod_{\lambda_i\ne\infty}
(x-\lambda _i)^{r_i}.
\end{equation}
\end{itemize}
\end{fmtheorem}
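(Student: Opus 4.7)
The theorem contains two implications. The forward one (existence of a sequence of replacements $\Rightarrow$ $\ll\mc A\rr<\ll\mc B\rr$) is a direct verification: for each of (i)--(vi) one checks that the displayed $\varepsilon$-perturbation of $\mc P$ has Kronecker canonical form $\mc Q$. The minimal indices of the perturbed pair are read off from elementary rank computations on suitable leading and trailing submatrices, and its regular part is analyzed via the Smith form of $A-\lambda B$ over $\mb C[\lambda]$. For (vi) in particular, the choice (\ref{lyv})--(\ref{amt}) of $\alpha_i,\beta_j$ is rigged precisely so that the perturbed pair is regular of size $m+n-1$, its characteristic polynomial equals $\prod_i(x-\mu_i)^{r_i}$, and the elementary divisor at each $\mu_i$ is $(x-\mu_i)^{r_i}$.

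For the backward implication, any chain $\ll\mc A\rr<\ll\mc B\rr$ refines into finitely many immediate successors (orbit dimensions strictly decrease), so I reduce to the case where $\ll\mc B\rr$ immediately succeeds $\ll\mc A\rr$. Theorem \ref{th} of Section \ref{nfl} then further restricts attention to perturbations of a single subpair of $\mc A$ of one of two types: (a) a direct sum of two indecomposable direct summands of $\mc A$, or (b) a pair $(I,J)$ in which $J$ is a Jordan matrix with a single eigenvalue $\lambda\ne 0$. Case (b) is the classical similarity-orbit problem for a single square matrix; the theorems of Den~Boer--Thijsse and Markus--Parilis (treated in Section \ref{jord}) give in this setting precisely the replacement (v) with $\lambda\in\mb C\setminus\{0\}$. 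The remaining eigenvalues $\lambda=0$ and $\lambda=\infty$ of (v) fall under (a), since in those two situations the relevant subpair is already $\mc D_m(0)\oplus\mc D_n(0)$ or $\mc D_m(\infty)\oplus\mc D_n(\infty)$.

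Case (a) is the heart of the argument and is handled by the miniversal deformation of $P\oplus Q$ from \cite{gar_ser} (Section \ref{prel}): every pair in a small neighborhood of $P\oplus Q$ is equivalent, via a smooth strict equivalence, to a parametric pair lying in the miniversal slice, which has only a few nonzero entries. I would then enumerate the unordered pair of indecomposable types $\{P,Q\}$ --- two $\mc L^T$'s, two $\mc L$'s, $\mc L^T$ with $\mc D$, $\mc L$ with $\mc D$, two $\mc D$'s with the same eigenvalue, two $\mc D$'s with different eigenvalues, and $\mc L^T$ with $\mc L$ --- and in each case compute the Kronecker canonical form of a generic perturbed pair in the slice, obtaining either no new immediate successor or exactly one of the replacements (i)--(vi), matching the $\varepsilon$-perturbations in the statement.

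\textbf{Main obstacle.} The hardest case is the mixed pair $\mc L_m^T\oplus\mc L_n$, which produces (vi). Its miniversal deformation is parametrized by $\alpha_1,\dots,\alpha_m,\beta_1,\dots,\beta_n$, and for generic values of these parameters the perturbed pair becomes regular of size $m+n-1$ (the drop by one reflects a cancellation between the $L_m^T$ and $L_n$ blocks). Matching the resulting Kronecker form to a prescribed $\mc D_{r_1}(\mu_1)\oplus\dots\oplus\mc D_{r_k}(\mu_k)$ translates into matching the characteristic polynomial of the perturbed regular pair to $\prod_i(x-\mu_i)^{r_i}$, which is precisely the content of (\ref{lyv})--(\ref{amt}); verifying that every such regular Kronecker form of total size $m+n-1$ is attained, and no other, completes the proof of Theorem~I.
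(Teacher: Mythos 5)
Your plan mirrors the paper's proof of Theorem~I: for the ``only if'' direction you reduce to immediate successors, invoke Theorem~\ref{th} to localize the perturbation to a single subpair (either two indecomposable summands, or the whole group of $\mc D$-blocks with a fixed eigenvalue), and then compute the resulting Kronecker forms --- via the Garc\'{\i}a-Planas--Sergeichuk miniversal form for the two-summand cases (Theorems~\ref{case1}--\ref{case2}) and via Den~Boer--Thijsse/Markus--Parilis for the Jordan case (Theorem~\ref{jut}); the ``if'' direction is the explicit verification of the displayed $\varepsilon$-perturbations, as in the part~(b) statements of those theorems. Two small corrections: (1) the eigenvalues $\lambda=0$ and $\lambda=\infty$ in (v) do not need a separate fallback to case (a); the paper treats $\mc D_m(\lambda)\oplus\mc D_n(\lambda)$ for every $\lambda\in\mb C\cup\infty$ uniformly through the same reduction to similarity of Jordan matrices (Theorem~\ref{kurf} via Theorem~\ref{kur}, using the Lipschitz argument at the start of Section~\ref{jord}), so there is nothing special about $0$ or $\infty$ once the subpair is identified. (2) Orbit dimensions strictly \emph{increase}, not decrease, along the closure order $<$ (the smaller orbit lies in the boundary of the closure of the larger one); the sign error is harmless since the finiteness of $\mc P_{m,n}$ already guarantees that any $<$-chain refines into immediate successors.
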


The statements (i)--(vi) of Theorem I follow, respectively, from Theorems \ref{case1}--\ref{case2}
of Section \ref{pairs} due to Theorems \ref{th} and \ref{jut}.

Up to permutations of summands, each Kronecker pair has the form
\begin{equation}\label{hbg}
  \begin{gathered}
\mc A:=\bigoplus_{i=1}^{\un s}\mc L^T_{m_i}\oplus\bigoplus_{i=1}^{\up s}
\mc L_{n_i}\oplus
\bigoplus_{i=1}^{t}
\Big(\mc D_{k_{i1}}(\lambda_i)\oplus\dots\oplus
\mc D_{k_{is_i}}(\lambda_i)\Big),
                           \\
m_1\le\cdots\le m_{\underline s}, \ \
n_1\le\cdots\le n_{\overline s},\ \
k_{i1}\le\cdots\le k_{is_{i}}\ (i=1,\dots,t),
  \end{gathered}
\end{equation}
in which $\lambda_1,\dots,\lambda_t\in\mb C\cup\infty$ are distinct.
The numbers $\un s,s_1,\dots,s_t,\up s$ can be zero, which means that the corresponding direct summands in \eqref{hbg}
 are absent.


The following theorem in the form of coin moves is given by Edelman, Elmroth, and K\r{a}gstr\"{o}m \cite[Theorem 3.2]{kag2} (see also \cite[Theorem 2.4]{joh} and \cite{ben}).

\begin{smtheorem}
Let $\mc A$ be the Kronecker pair \eqref{hbg}.
An orbit $\mc O$ immediately succeeds $\ll\mc A\rr$ if and only if $\mc O$ is the orbit of a pair that is obtained from $\mc A$ by one of the following  replacements, which are special cases of the replacements {\rm(i)}--{\rm(vi)} from Theorem I:
\begin{itemize}

  \item[\rm(i$'$)]
$\mc L^T_{m_i} \oplus
\mc L^T_{m_j} \re
\mc L^T_{m_i+1} \oplus
\mc L^T_{m_j-1}$, in which either $j =i+1$ and $m_i+2\le m_{i+1}$, or $j =i+2$ and
$(m_i,m_{i+1},m_{i+2})=(m_i,m_i+1,m_i+2)$,

  \item[\rm(ii$'$)]
$\mc L_{n_i} \oplus
\mc L_{n_j} \re
\mc L_{n_i+1} \oplus
\mc L_{n_j-1}$, in which either $j =i+1$ and $n_i+2\le n_{i+1}$, or $j =i+2$ and $(n_i,n_{i+1},n_{i+2})=(n_i,n_i+1,n_i+2)$,

\item[\rm(iii$'$)]
$\mc L^T_{m_{\un s}}\oplus {\mc D}_{k_{is_i}}(\lambda_i) \re
\mc L^T_{m_{\un s}+1}\oplus {\mc D}_{k_{is_i}-1}(\lambda_i)$,

\item[\rm(iv$'$)]
$\mc L_{n_{\up s}}\oplus {\mc D}_{k_{is_i}}(\lambda_i) \re
\mc L_{n_{\up s}\,+1}\oplus {\mc D}_{k_{is_i}-1}(\lambda_i)$,

\item[\rm(v$'$)]
$\mc D_{k_{ij}}(\lambda_i)\oplus\mc D_{k_{i,j+1}}(\lambda_i)\re
\mc D_{k_{ij}-1}(\lambda_i)\oplus\mc D_{k_{i,j+1}+1}(\lambda_i)$,

\item[\rm(vi$'$)]
$\mc L^T_{m_{\un s}}\oplus
\mc L_{n_{\up s}}
 \re
{\mc D}_{r_1} (\mu_1)\oplus\dots\oplus {\mc D}_{r_q}(\mu_q)$, in which  $q\ge t$,
\begin{equation*}\label{klc}
\mu_1=\lambda _1\text{ and }k_{1s_1}\le r_1,\ \ \dots,\ \ \mu_t=\lambda _t\text{ and } k_{ts_t}\le r_t,
\end{equation*}
$\mu_1,\dots,\mu_q\in{\mathbb C}\cup\infty$ are distinct, and $r_1+\dots +r_q=m_{\un s}+n_{\up s}-1$.
\end{itemize}
\end{smtheorem}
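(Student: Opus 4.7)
The plan is to derive Theorem II as a combinatorial corollary of Theorem I. By Theorem I, an orbit $\ll\mc B\rr$ satisfies $\ll\mc A\rr<\ll\mc B\rr$ iff $\mc B$ is obtained from $\mc A$ by a sequence of replacements of types (i)--(vi), so $\ll\mc B\rr$ immediately succeeds $\ll\mc A\rr$ iff the replacement producing $\mc B$ from $\mc A$ cannot be \emph{factored} through an intermediate Kronecker pair. The whole theorem is therefore a case analysis: for each replacement in (i)--(vi), identify which instances are unfactorable. The restrictions listed in (i$'$)--(vi$'$) should turn out to be exactly these instances.

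The pure-block moves (i), (ii), and (v) each act on a single integer partition attached to $\mc A$, namely $(m_1,\dots,m_{\un s})$, $(n_1,\dots,n_{\up s})$, or $(k_{i1},\dots,k_{is_i})$ for fixed $i$. Such a move replaces a pair of parts $(a,b)$ with $a<b$ by $(a+1,b-1)$, so the induced order coincides (up to reversal) with the usual dominance order on partitions. It is classical that the covers of the dominance order are exactly of two kinds: change adjacent parts $(a,b)$ with $b\ge a+2$, or change three consecutive parts $(a,a+1,a+2)$ into $(a+1,a+1,a+1)$. Any other instance is factored by introducing an intermediate partition that differs from both endpoints by a single box move. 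This yields (i$'$), (ii$'$), and (v$'$). For the mixed moves (iii) and (iv), an application that acts on $\mc L^T_{m_h}$ and on $\mc D_k(\lambda_i)$ with $h<\un s$ factors as a type-(iii) move on $\mc L^T_{m_{\un s}}\oplus\mc D_k(\lambda_i)$ followed by a type-(i) move on $\mc L^T_{m_h}\oplus\mc L^T_{m_{\un s}+1}$; the case $k<k_{is_i}$ factors via a type-(v) move among the Jordan tower at $\lambda_i$ followed by the trimmed (iii). Hence immediacy forces $h=\un s$ and $k=k_{is_i}$, which gives (iii$'$) and (iv$'$).

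Case (vi) is the main obstacle. Suppose $\mc L^T_{m_{\un s}}\oplus\mc L_{n_{\up s}}$ is replaced by $\mc D_{r_1}(\mu_1)\oplus\dots\oplus\mc D_{r_q}(\mu_q)$ with $\sum r_j=m_{\un s}+n_{\up s}-1$. If some $\lambda_i$ is absent from $\{\mu_1,\dots,\mu_q\}$, or if $\mu_j=\lambda_i$ with $r_j<k_{is_i}$, then I factor the move as follows: first apply a type-(vi) replacement whose regular part at $\lambda_i$ is smaller or absent, then use (v)-moves among the Jordan tower at $\lambda_i$ to rearrange block sizes until the final Kronecker type is reached; the existence of this path forces the hypotheses of (vi$'$) to be necessary. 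Conversely, when every $\lambda_i\in\{\mu_1,\dots,\mu_q\}$ and $r_i\ge k_{is_i}$ for $i\le t$, a careful bookkeeping of Kronecker invariants rules out any intermediate $\mc C$: a replacement of type (i)--(v) applied to $\mc A$ preserves at least one $\mc L^T$ summand of size $m_{\un s}$ or $\mc L$ summand of size $n_{\up s}$ (and so cannot lead to $\mc B$ by a further single replacement without returning through (vi)), while any smaller type-(vi) replacement violates the additive constraint $\sum r_j=m_{\un s}+n_{\up s}-1$ and cannot be repaired by (v)/(iii)/(iv). The most delicate verification will be excluding factorizations that combine a partial (vi)-move with subsequent (iii) or (iv) moves at the newly created eigenvalues; I would close this by comparing Segre characteristics at each $\mu_j$ in $\mc A$, $\mc C$, and $\mc B$, and exhibiting an explicit block-level contradiction in each borderline case.
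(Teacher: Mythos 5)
Your overall strategy is the paper's: by Theorem I, $\ll\mc B\rr$ immediately succeeds $\ll\mc A\rr$ iff $\mc B$ arises from $\mc A$ by a single replacement (i)--(vi) that cannot be factored into two or more such replacements, and one then decides factorability case by case. However, in every case you really argue only the necessity half (non-restricted instances factor), and even there not always correctly, while the sufficiency half --- that a (j$'$)-instance admits \emph{no} factorization, in particular none passing through Kronecker pairs with a different mix of summand types --- is missing. That half is the bulk of the paper's proof: it shows that in any chain $\mc A=\mc A_1\mapsto\cdots\mapsto\mc A_{p+1}=\mc B$ all intermediate moves must be of the single type (j), using monotone quantities such as $\un s$, $\up s$, $m_1+\dots+m_{\un s}$, $n_1+\dots+n_{\up s}$, $\sum_{p<q}(k_{iq}-k_{ip})$, and lexicographic orders, and only then concludes $p=1$. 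Your dominance-order reduction for (i), (ii), (v) silently assumes that any factorization stays inside one partition, which is exactly what has to be proved (a priori a factorization of a (v$'$)-move could pass through (iii)/(iv)-moves that also change the Jordan tower at $\lambda_i$); and for (iii), (iv), (vi$'$) the unfactorability claim is either asserted or deferred (``I would close this by comparing Segre characteristics \dots''), which is precisely the delicate content --- for (vi$'$) the paper must show that the unique (vi)-move in any chain coincides with the given one, via the inequalities $m'_{\un s}=m_{\un s}$, $n'_{\up s}=n_{\up s}$, $\rho_i\ge r_i$.

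Two further concrete problems. First, your factorization for the necessity part of case (vi) cannot work as described: if the target has $\lambda_i$ absent among the $\mu_j$, or $\mu_j=\lambda_i$ with $r_j<k_{is_i}$, you propose to do first a (vi)-move with a smaller or absent regular part at $\lambda_i$ and then (v)-moves; but (v)-moves preserve the total size of the Jordan structure at each eigenvalue, so the deficit created by the smaller (vi)-move can never be repaired, and no such intermediate lies below $\mc B$. The paper factors the other way round, first transferring size between $\mc L_{n_{\up s}}$ (or $\mc L^T_{m_{\un s}}$) and the Jordan tower by (iii)/(iv)-type moves, e.g.\ $\mc L_{n_{\up s}}\oplus\mc L^T_{m_{\un s}}\oplus\mc D_{k_{i1}}(\lambda_i)\re\mc L_{n_{\up s}+k_{i1}}\oplus\mc L^T_{m_{\un s}}\re\mc D_{r_1}(\mu_1)\oplus\dots\oplus\mc D_{r_q}(\mu_q)\oplus\mc D_{k_{i1}}(\lambda_i)$, and similarly when $r_1<k_{1s_1}$. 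Second, your appeal to ``the classical covers of the dominance order'' is not a proof and, as you state it, is not even the classical statement: Brylawski's covering theorem allows a box move between parts differing by exactly $2$ across a run of equal intermediate parts of \emph{any} length, not only the triple $(a,a+1,a+2)$; for instance the single admissible move $\mc L^T_1\oplus\mc L^T_3\re\mc L^T_2\oplus\mc L^T_2$ inside $\mc L^T_1\oplus\mc L^T_2\oplus\mc L^T_2\oplus\mc L^T_3$ is unfactorable although the two summands sit at positions $i$ and $i+3$. So you cannot simply cite the classical result and match it to the lists (i$'$), (ii$'$); you must carry out the cover analysis for these longer runs explicitly and reconcile it with the stated restrictions (this borderline family is also the one the paper's own ``to the contrary'' assumption $n_i<n_{i+1}<n_j$, $n_i+3\le n_j$ does not reach, so it deserves a careful independent treatment rather than a citation).
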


Define the matrices whose sizes will be clear from the context:
\begin{equation}\label{rdpd}
\D_r(\varepsilon):=\mat{0...0\,\varepsilon  \,0...0\\0},\qquad
\nabla_r(\varepsilon)
:=\mat{0\\0...0\,\varepsilon  \,0...0},
\end{equation}
in which
$\varepsilon $ is an arbitrary nonzero complex number that is located in the $r$th column. We often write $\D_r$ and $\nabla_r$ omitting $\varepsilon$.  Set $\D_0=\nabla_0:=0$.

The \emph{lower cone} of an orbit $\ll\mc A\rr$ is the set $\ll\mc A\rr^{\vee}$ of all orbits $\ll\mc B\rr$ such that $\ll\mc A\rr\le\ll\mc B\rr$. Theorems \ref{case1}--\ref{case2} (which are used in the proof of Theorem I) ensure the following theorem.

\begin{tmtheorem}
If $\mc A$ is an indecomposable Kronecker pair, then
$\ll\mc A\rr^{\vee}$ is a one-element set; it consists of the orbit of  $\mc A$.

The lower cones of all direct sums of two indecomposable Kronecker pairs are the following ($\varepsilon $ is an arbitrary nonzero complex number).

\begin{itemize}

  \item[\rm(i)] The cone
$\ll\mc L_m^T \oplus \mc L_n^T\rr^{\vee}$ with $1\le m\le n$
consists of the orbits of
\begin{equation}\label{ffm}
\text{$\mc L_{m+r}^T \oplus
\mc L_{n-r}^T$,\quad in which $r\ge 0$ and $m+r\le n-r$.}
\end{equation}
Each pair \eqref{ffm} is the Kronecker canonical form of
\[\left(
\mat{L^T_m&0\\0&L^T_n},
\mat{R^T_m&\D_r(\varepsilon )
\\0&R^T_n}
\right).
\]

\item[\rm(ii)]
$\ll\mc L_m \oplus
\mc L_n\rr^{\vee}$ with $1\le m\le n$
consists of the orbits of
\begin{equation}\label{ffm1}
\text{$\mc L_{m+r} \oplus
\mc L_{n-r}$,\quad in which $r\ge 0$ and $m+r\le n-r$.}
\end{equation}
Each pair \eqref{ffm1} is the Kronecker canonical form of
\[\left(
\mat{L_m&0\\0&L_n},
\mat{R_m&0
\\\D_r^T(\varepsilon )&R_n}
\right).
\]

  \item[\rm(iii)]
$\ll\mc L^T_m\oplus {\mc D}_n(\lambda )\rr^{\vee}$ with $m\ge 1$, $n\ge 1$, and $\lambda \in\mb C\cup\infty$  consists of the orbits of
\begin{equation}\label{cjr}
\mc L^T_{m+r} \oplus \mc D_{n-r}(\lambda ),\quad\text{in which } 0\le r\le n.
\end{equation}
Each pair \eqref{cjr} with $r>0$ is the Kronecker canonical form of
\[
\setlength{\arraycolsep}{2pt}
    \left(
 \mat{
L^T_m&0\\ 0&I_n},\,
\mat{
R^T_m&\D_{n-r+1}
\\ 0&J_n(\lambda )}
 \right)\,
 \text{if }\lambda \in\mathbb C,
           \
\left(
 \mat{
L^T_m&\nabla_{n-r+1}\\ 0&J_n(0)},\,
\mat{
R^T_m&0\\ 0&I_n}
 \right)\,
\text{if }\lambda =\infty.
\]

\item[\rm(iv)]
$\ll\mc L_m\oplus{\mc D}_n(\lambda )\rr^{\vee}$ with $m\ge 1$, $n\ge 1$, and $\lambda \in\mb C\cup\infty$  consists of the orbits of
\begin{equation}\label{cgr}
\mc L_{m+r}\oplus \mc D_{n-r}(\lambda ) ,\quad\text{in which } 0\le r\le n.
\end{equation}
Each pair \eqref{cgr} with $r>0$ is the Kronecker canonical form of
\[
\setlength{\arraycolsep}{2pt}
    \left(
 \mat{
 L_m&0\\ 0&I_n},\,
\mat{
R_m&0
\\\D_{r}^T&J_n(\lambda )}
 \right)\,
 \text{if }\lambda \in\mathbb C,
           \
\left(
 \mat{
J_n(0)&\nabla_{r}^T\\0 &L_m },\,
\mat{
I_n&0\\ 0&R_m}
 \right)\,
\text{if }\lambda =\infty.
\]

  \item[\rm(v)]
$\ll\mc D_m(\lambda)\oplus\mc D_n(\lambda)\rr^{\vee}$ with $1\le m\le n$ and $\lambda \in\mb C\cup\infty$
consists of the orbits of
\begin{equation}\label{htm}
\text{$\mc D_{m-r}(\lambda)
\oplus \mc D_{n+r}(\lambda)$,\quad in which $0\le r\le m$.}
\end{equation}
Each pair \eqref{htm} with $r>0$ is the Kronecker canonical form of
\[
\setlength{\arraycolsep}{2pt}
\left(I_{m+n},\mat{J_m(\lambda )&\D_r^T\\0&J_n(\lambda )}\right)\,
\text{if }\lambda \in\mathbb C,
                        \
\left(\mat{J_m(0)&\D_r^T\\0&J_n(0)},I_{m+n}\right)\,
\text{if }\lambda =\infty.
\]

  \item[\rm(vi)]
$\ll\mc L^T_m\oplus
\mc L_n\rr^{\vee}$ with $m\ge 1$ and $n\ge 1$ consists of $\ll\mc L^T_m\oplus
\mc L_n\rr$ and the orbits of
\begin{equation}\label{akt}
{\mc D}_{r_1} (\mu_1)\oplus\dots\oplus {\mc D}_{r_k}(\mu_k),\qquad  \begin{matrix}
r_1+\dots +r_k=m+n-1, \\
\mu_1,\dots,\mu_k\in{\mathbb C}\cup\infty\text{ are distinct.} \\
\end{matrix}
\end{equation}
Each pair \eqref{akt} is the Kronecker canonical form of the pair \eqref{kyb} that is determined by \eqref{lyv} and
\eqref{amt}.
\end{itemize}
\end{tmtheorem}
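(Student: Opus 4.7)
The plan is to prove Theorem III in two stages: first, use Theorem I to enumerate which orbits $\ll\mc B\rr$ lie in $\ll\mc A\rr^{\vee}$ for each of the given $\mc A$, and second, exhibit an explicit small perturbation of $\mc A$ whose Kronecker form is $\mc B$ for each such $\mc B$. The second stage is what Theorems \ref{case1}--\ref{case2} of Section \ref{pairs} supply (using the miniversal deformation machinery of Section \ref{prel}); here I concentrate on the combinatorial reduction and the structural features that make each case manageable.

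For the indecomposable case, every replacement (i)--(vi) of Theorem I requires $\mc A$ to contain a specified direct sum of two indecomposable summands. If $\mc A$ itself is indecomposable, no such replacement can be initiated, so Theorem I forces $\ll\mc A\rr^{\vee}=\{\ll\mc A\rr\}$.

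For each of the six cases in which $\mc A$ is a sum of two indecomposable pairs, I would identify the replacements from Theorem I that can be applied, and verify that iterating them produces exactly the displayed families. Starting from $\mc L_m^T\oplus \mc L_n^T$ only type (i) is applicable (the others need summands of other types), and since each application keeps the form ``two $\mc L^T$-summands'', no new replacement type ever becomes available; after $r$ iterations we reach $\mc L_{m+r}^T\oplus \mc L_{n-r}^T$ subject to $m+r\le n-r$, producing exactly \eqref{ffm}. Cases (ii)--(v) are strictly analogous: in each, a single replacement type preserves the structural form of $\mc A$ and accounts for all achievable orbits. Case (vi) is the only one in which the structure genuinely changes. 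A first application of (vi) turns $\mc L_m^T\oplus \mc L_n$ into a direct sum of $\mc D$-blocks with pairwise distinct eigenvalues; no further replacement is then applicable, since type (v) would require a repeated eigenvalue among the $\mc D$-summands, and types (i)--(iv), (vi) all require an $\mc L$-type summand. Thus $\ll\mc L_m^T\oplus \mc L_n\rr^{\vee}$ consists of $\ll\mc L_m^T\oplus \mc L_n\rr$ together with the orbits in \eqref{akt}.

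The hard part is the second stage --- verifying that each displayed perturbed pair is strictly equivalent to the claimed Kronecker canonical form --- which is the content of Theorems \ref{case1}--\ref{case2}. In cases (i)--(v) the verification amounts to constructing explicit nonsingular $S$ and $R$ that absorb the $\D_r$ or $\nabla_r$ block into the surrounding block structure via triangular row and column operations; the position of $\varepsilon$ is chosen precisely so that one shifted index is created and the two pieces reglue into the new direct sum. The most delicate case is (vi): the $m+n$ perturbation parameters in \eqref{kyb} are forced by \eqref{lyv}--\eqref{amt} to encode the monic polynomial whose roots are the eigenvalues $\mu_i$ with multiplicities $r_i$, and one must exhibit a direct-sum decomposition of the perturbed pair whose $\mc D_{r_i}(\mu_i)$-blocks match the factorization of that polynomial. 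Once these verifications are in place, Theorem III follows immediately: the combinatorial enumeration of the first stage identifies the set of orbits in $\ll\mc A\rr^{\vee}$, and the explicit perturbations of the second stage realize each one by an arbitrarily small perturbation of $\mc A$.
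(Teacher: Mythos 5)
Your proposal is correct, but it routes the enumeration differently from the paper. The paper obtains Theorem III as an immediate corollary of Theorems \ref{case1}--\ref{case2}: their parts (a) already assert that the Kronecker canonical forms of all pairs in an arbitrarily small neighborhood of each two-summand pair are exactly the displayed lists, which is precisely the lower-cone statement, and their parts (b) supply the explicit representatives, so nothing further is argued. You instead derive the cone enumeration from Theorem I by checking which replacements (i)--(vi) are applicable and iterating them --- only one type ever applies in cases (i)--(v), and in case (vi) a single application of (vi) leaves $\mc D$-summands with distinct eigenvalues to which no replacement applies --- and you invoke the Section \ref{pairs} theorems only for the representatives. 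This is not circular, since Theorem I is proved from Theorems \ref{th}, \ref{jut} and \ref{case1}--\ref{case2} without reference to Theorem III, and your combinatorial closure argument is sound, including the observation that an indecomposable pair admits no replacement at all, which settles the first claim (the paper leaves this case implicit); the only loose phrase is that the replacement ``preserves the structural form'' in cases (iii)--(v), which fails at the endpoints where a summand $\mc D_0$ or the smaller Jordan block disappears, but there the resulting pair is indecomposable and the iteration simply stops, so the ranges $0\le r\le n$ in \eqref{cjr}, \eqref{cgr} and $0\le r\le m$ in \eqref{htm} come out correctly. What your route buys is a uniform combinatorial check that the moves close up exactly on the displayed families (e.g.\ $m+r\le n-r$ in \eqref{ffm} as the running admissibility condition); what it costs is invoking the full strength of Theorem I, whose proof already contains the neighborhood enumerations of Theorems \ref{case1}--\ref{case2}(a) that the paper simply quotes, so the paper's path is shorter.
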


Let us rearrange the direct summands of \eqref{hbg} as follows:
\begin{equation}\label{hwm}
  \begin{gathered}
  \begin{aligned}
\mc A:=&\mc L^T_{m_1}\oplus
\mc L^T_{m_2}\oplus\dots\oplus
\mc L^T_{m_{\underline s}}\\
&\oplus\bigoplus_{i=1}^{t}
\Big(\mc D_{k_{i1}}(\lambda_i)\oplus
\mc D_{k_{i2}}(\lambda_i)\oplus\dots\oplus
\mc D_{k_{is_i}}(\lambda_i)\Big)
                    \\
&\oplus\mc L_{n_{\up s}}\oplus
\mc L_{n_{\up s-1}}\oplus\dots\oplus
\mc L_{n_1},
  \end{aligned}
                           \\
m_1\le\cdots\le m_{\underline s}, \ \
k_{i1}\le\cdots\le k_{is_{i}}\ (i=1,\dots,t),\ \
n_1\le\cdots\le n_{\overline s}.
  \end{gathered}
\end{equation}
By the following theorem,
each immediate successor of $\ll\mc A\rr$ is the orbit of a pair that is obtained by an arbitrarily small perturbation of only one pair of  upper diagonal blocks of $\mc A$.

\begin{fotheorem}
Let $\mc A=([A_{ij}],[A'_{ij}])$ be the
Kronecker pair \eqref{hwm} partitioned into blocks $A_{ij}$ and $A'_{ij}$ such that the pairs of diagonal blocks $(A_{11}, A_{11}')$, $(A_{22},A_{22}'),$ $\ldots$ are the direct summands
\begin{equation}\label{weq}
\begin{split}
\mc L^T_{m_1},\dots,\mc
L^T_{m_{\un s}},\ \ &
\mc D_{k_{11}}(\lambda_1), \dots,D_{k_{1s_1}}(\lambda_1),\\ &\dots,\ \ \mc D_{k_{t1}}(\lambda_t), \dots,D_{k_{ts_t}}(\lambda_t),\ \
\mc L_{n_{\up s}},\dots,
\mc L_{n_1}
\end{split}
\end{equation}
of \eqref{hwm}. Then each immediate successor of $\ll\mc A\rr$ is the orbit of some matrix pair that is obtained from
$\mc A$ by an
arbitrarily small
perturbation of only one pair $(A_{ij},A'_{ij})$ with $i<j$
of its upper diagonal blocks.
\end{fotheorem}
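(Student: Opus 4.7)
The plan is to combine the two main tools already flagged in the introduction: the miniversal deformation of a matrix pair under equivalence from Section \ref{prel}, and Theorem \ref{th} from Section \ref{nfl}, which already reduces the search for immediate successors to perturbations of subpairs of one of two restricted types. Theorem IV can be viewed as a block-position refinement of Theorem \ref{th} for the specific ordering \eqref{hwm}.

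First I would use the miniversal deformation to put every small perturbation of $\mc A$ into a normal form. After applying a smooth equivalence transformation, any pair close to $\mc A$ becomes $\mc A + \D(\xi)$, where $\D(\xi)$ has nonzero entries only in a prescribed list of positions that parametrize $\mathrm{Hom}$ between the indecomposable summands. The crucial observation, which I would verify by reading off the miniversal deformation explicitly for each ordered pair of summand types among $\mc L^T_m$, $\mc D_k(\lambda)$, $\mc L_n$ in the order chosen in \eqref{hwm}, is that all parameter positions sit in the off-diagonal blocks $(A_{ij},A'_{ij})$ with $i<j$, i.e.\ the deformation is strictly upper block-triangular with respect to the partition fixed by the direct summands \eqref{weq}. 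The chosen order $\mc L^T, \mc D, \mc L$ is exactly what makes this the case; with any other ordering one would get parameters on the lower block-triangular side as well.

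Next I would use this to prove the theorem. Let $\ll\mc B\rr$ be an immediate successor of $\ll\mc A\rr$ and let $\D = \sum_{i<j}\D_{ij}$ be a perturbation in the miniversal deformation (with $\D_{ij}$ supported in the $(i,j)$ block pair) such that $\mc A+\D$ lies in $\ll\mc B\rr$. By Theorem \ref{th} the orbit $\ll\mc B\rr$ is also achieved by an arbitrarily small perturbation of one single subpair of $\mc A$ that is either a direct sum of two indecomposable summands of $\mc A$ or a pair of the form $(I,J_n(\lambda))$ with $\lambda\ne 0$. In either case this subpair occupies exactly one pair of diagonal blocks of \eqref{hwm} (or two consecutive diagonal blocks), and the perturbation producing $\mc B$ can, after an equivalence, be re-expressed inside the miniversal deformation as supported in a single upper-triangular block position $(i_0,j_0)$; this gives a pair $\mc A+\D_{i_0j_0}$ with orbit $\ll\mc C\rr$ satisfying $\ll\mc A\rr<\ll\mc C\rr\le\ll\mc B\rr$, and immediate succession forces $\ll\mc C\rr=\ll\mc B\rr$.

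The main obstacle, and the step that demands real work rather than bookkeeping, is the claim that the single-subpair perturbation provided by Theorem \ref{th} can be \emph{replaced} by a perturbation supported in a single off-diagonal block pair of \eqref{hwm} without leaving the orbit $\ll\mc B\rr$. For the first case of Theorem \ref{th}, where the subpair is a direct sum of two indecomposable summands of $\mc A$, this is a direct application of Theorems \ref{case1}--\ref{case2} (Theorem III), each of which is stated precisely as an off-diagonal block perturbation. The genuinely delicate case is $(I,J_n(\lambda))$ with $\lambda\ne 0$, where the subpair need not consist of a single summand of $\mc A$ but of several copies of $\mc D_{k_{ij}}(\lambda_i)$; here one has to show that the resulting Kronecker form change splits as the effect of a sequence of two-summand perturbations and hence, by immediate succession, reduces to a single $(i_0,j_0)$-block perturbation, which is the content of the combinatorics of replacement (v$'$) in Theorem II.
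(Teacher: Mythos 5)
Your overall strategy — reduce via Theorem \ref{th} and then handle the two cases with Theorem III and the Jordan analysis — is the correct route and matches the paper, which deduces Theorem IV from Theorem I in one sentence by observing that the explicit perturbations listed in Theorem I(i)--(vi) each live in a single upper off-diagonal block of \eqref{hwm}.

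However, your ``crucial observation'' is false as stated. The miniversal deformation \eqref{aaa} is \emph{not} strictly upper block-triangular with respect to the fine partition into the individual indecomposable summands of \eqref{hwm}. The obstruction is the Jordan part: for a group of summands $\mc D_{k_{i1}}(\lambda_i),\dots,\mc D_{k_{is_i}}(\lambda_i)$ sharing one eigenvalue, the deformed block is $\widetilde J_{k_{i1},\dots,k_{is_i}}(\lambda_i)$ from \eqref{hju}, which carries stars \emph{on the diagonal} blocks (the terms $J_{k_{il}}(\lambda_i)+0^{\leftarrow}$) and in \emph{lower}-triangular positions (the $0^{\downarrow}$ blocks below the diagonal). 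So a generic miniversal perturbation is not of the form $\sum_{i<j}\D_{ij}$. The paper's own claim, in the proof of Theorem \ref{th}, is only that \eqref{aaa} is ``upper \emph{superblock} triangular,'' meaning triangular with respect to the coarser partition in which each fixed eigenvalue's Jordan summands are merged into a single superblock; that is a much weaker statement and does not directly give the $i<j$ constraint inside the Jordan part. This is exactly why Theorem IV is stated as a \emph{refinement} of Theorem \ref{th}: for the Jordan superblocks, Theorem \ref{th} only localizes the perturbation to a single block pair inside $\mc D_l$ without any $i<j$ constraint, and the further reduction to an upper off-diagonal pair requires the additional analysis of Jordan orbit closures.

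Fortunately, the false claim is not load-bearing in your proposal: after invoking Theorem \ref{th}, you correctly identify that the two-summand case is disposed of by Theorems \ref{case1}--\ref{case2} (whose realizing perturbations $\D_r$, $\nabla_r$, etc.\ sit in a single upper off-diagonal block when the summands are ordered as in \eqref{hwm}), while the Jordan case requires showing that immediate succession restricts the change to two Jordan summands and that the corresponding change is realized by a single strictly upper perturbation. The precise references for that second step are Theorem \ref{jut}(b), which shows every immediate successor of a single-eigenvalue Jordan orbit has the two-block form $J_{p,q}$, and Theorem \ref{kur}(b), which realizes $J_{p,q}$ by the perturbation $\D_r(\ve)^T$ placed in the $(1,2)$ block of $J_m(\lambda)\oplus J_n(\lambda)$; ``replacement (v$'$) of Theorem II'' is the \emph{consequence} of these two results translated into pair language, not the mechanism. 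If you delete the incorrect block-triangularity remark and instead cite the superblock triangularity (only for what it's worth) plus Theorem \ref{jut}/\ref{kur} for the Jordan refinement, the argument closes cleanly and coincides in substance with the paper's derivation from Theorem I.
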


Theorem IV follows from Theorem I due to the block-triangular forms of the perturbations that are given in (i)--(vi).
We move backwards in the next sections: we first give an independent proof of Theorem \ref{th}, which is a weak form of Theorem IV. Using it, we prove Theorem I in Sections \ref{pairs} and \ref{jord}.

\section{Preliminaries: miniversal deformations of matrix pencils}\label{prel}

The notion of a miniversal deformation of a square complex matrix $A$ under similarity was introduced by Vladimir Arnold in \cite{arn};  it is a family of matrices with the minimal number of parameters to which all matrices $B$ close to $A$ can be reduced by similarity
transformations that smoothly depend on the entries of $B$ (see formal definitions in \cite{arn,arn2,arn3}). For example, all matrices that are close to $J_3(\lambda )$ can be reduced to the form
\begin{equation}\label{yg4}
\mat{\lambda &1&0\\0&\lambda &1\\0&0&\lambda}+\mat{0&0&0\\0&0&0\\ *&*&*}
\end{equation}
by similarity transformations that are close to the identity.

Let us formulate Arnold's theorem.
We denote by $0^{\uparrow}_{pq}$ (respectively, $0^{\downarrow}_{pq}$, $0^{\leftarrow}_{pq}$, and $0^{\rightarrow}_{pq}$) the $p\times q$ matrix, in which all entries are zero except for the entries of the first row (respectively,  last row,  first column, and last column) that are stars.   We usually omit the indices $p$ and $q$. For example, the second matrix in \eqref{yg4} is $0^{\downarrow}_{33}$.

Let us arrange the Jordan blocks in a Jordan matrix with a single eigenvalue as follows:
\[
J_{k_1,\dots,k_s}(\lambda ):=
J_{k_1}(\lambda )\oplus\dots\oplus
J_{k_s}(\lambda ),\qquad
k_{1}\le k_{2}\le
\cdots\le k_{s},
\]
and define the matrix with stars:
\begin{equation}\label{hju}
\setlength{\arraycolsep}{-7pt}
\wt J_{k_1,\dots,k_s}(\lambda ):=
\begin{bmatrix}
  J_{k_{1}}(\lambda)+{\la}&
  {\la} &\qquad \dots\qquad & {\la}
  \\[7pt]
  {\da} & J_{k_{2}}(\lambda)+{\la}
   &\ddotl& \vdots
  \\[7pt]
  \vdots &\ddotl & \ddotl & {\la}
  \\[7pt]
  0^{\da} & \dots & {\da}
  & J_{k_{s}}(\lambda)+
  {\la}
\end{bmatrix}.
\end{equation}

The following theorem has been proved by Arnold \cite[Theorem 4.4]{arn}; see also \cite[Section 3.3]{arn2} and
\cite[\S\,30]{arn3}.

\begin{theorem}[{\cite{arn}}]\label{teo2}
Let a Jordan matrix be written in the form
\begin{equation}\label{kje}
J=J_{k_{11},\dots,k_{1s_1}}(\lambda _1)
\oplus\dots\oplus
J_{k_{l1},\dots,k_{ls_l}}(\lambda _l)
,
\quad
\begin{matrix}
k_{i1}\le k_{i2}\le
\cdots\le k_{is_i},
\\
\lambda _1,\dots,\lambda _l\in\mb C \text{ are distinct.}
\end{matrix}
\end{equation}
Then all matrices $J+X$
that are sufficiently close to $J$ can
be simultaneously reduced by some similarity
transformation
\begin{equation}\label{tef}
J+X\mapsto
S(X)^{-1}(J+X)
S(X),\qquad\begin{matrix}
\text{${S}(X)$
is analytic}\\
\text{at $0$ and }
S(0)=I,
\end{matrix}
\end{equation}
to the form
\begin{equation}\label{nyx}
\wt J:=\wt J_{k_{11},\dots,k_{1s_1}}(\lambda _1)
\oplus\dots\oplus
\wt J_{k_{l1},\dots,k_{ls_l}}(\lambda _l),
\end{equation}
in which
the stars are replaced by
complex numbers that depend
analytically on the entries of $X$. The
number of stars is minimal that can be
achieved by similarity transformations of the form
\eqref{tef}; this number is equal to the
codimension of the similarity class of
$J$.
\end{theorem}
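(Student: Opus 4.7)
The plan is to deduce Theorem \ref{teo2} from Arnold's transversality principle for versal deformations. For the $GL_n$-action $M\mapsto S^{-1}MS$, the tangent space at $J$ to its similarity orbit $\mathcal O_J$ is $T_J\mathcal O_J=[J,M_n]:=\{JT-TJ:T\in M_n(\mathbb C)\}$, and any linear complement $V\subset M_n(\mathbb C)$ to this tangent space yields a miniversal deformation $J+v$ with $v\in V$. The minimum number of parameters equals $\dim V=\dim M_n-\dim[J,M_n]=\dim C(J)$, which is the codimension of $\mathcal O_J$. Hence it suffices to check that the subspace $V$ spanned by the star positions of \eqref{nyx} is complementary to $[J,M_n]$; both the reduction \eqref{tef}--\eqref{nyx} and the minimality of the star count then follow.

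To extract the reduction itself from such a $V$, write $S=I+T$ with $T$ small and seek $Y\in V$ with $(I+T)^{-1}(J+X)(I+T)=J+Y$. Multiplying through by $I+T$ and rearranging, the equation takes the form $[J,T]-Y+X=F(T,Y,X)$, where $F$ vanishes to second order at the origin. Linearized, it reads $[J,T]-Y=-X$, which by $M_n=V\oplus[J,M_n]$ is uniquely solvable for $Y\in V$ and for $T$ modulo $C(J)$. Restricting $T$ to a fixed complement of $C(J)$ in $M_n$ and applying the implicit function theorem produces analytic $T(X)$, $Y(X)$ with $T(0)=0$, $Y(0)=0$ solving the full nonlinear equation, yielding the similarity \eqref{tef} taking $J+X$ to the form \eqref{nyx}.

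The transversality $M_n=V\oplus[J,M_n]$ further decouples: the Sylvester operator $M\mapsto J'M-MJ''$ is invertible whenever $J'$ and $J''$ have disjoint spectra, so blocks of $M_n$ between Jordan parts of distinct eigenvalues lie inside $[J,M_n]$ and need no stars. After translating by $\lambda I$ one may take $J=J_{k_1}(0)\oplus\cdots\oplus J_{k_s}(0)$ with $k_1\le\cdots\le k_s$. The star count in \eqref{hju} then matches the centralizer dimension: each diagonal block contributes $k_i$ stars (first column), each $(i,j)$-block with $i>j$ contributes $k_j$ stars (last row), and each with $i<j$ contributes $k_i$ stars (first column), summing to $\sum_{i=1}^s(2s-2i+1)k_i=\dim C(J)$. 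It remains to show $V\cap[J,M_n]=0$, which is a block-by-block check: the image of $T_{ij}\mapsto J_{k_i}(0)T_{ij}-T_{ij}J_{k_j}(0)$ is a known pattern of subdiagonals of the $(i,j)$-block, and the positions chosen in \eqref{hju} are precisely outside this pattern, so any $v\in V\cap[J,M_n]$ forces every $T_{ij}$ into the Sylvester kernel, whose assembly is $C(J)$ and contributes zero to $[J,T]$.

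The main obstacle is this final block-by-block verification. Although each individual Sylvester image is classical, the convention in \eqref{hju} (first-column stars on and above the block diagonal, last-row stars below it) must be shown to avoid every commutator image \emph{simultaneously} across all pairs $(i,j)$ for arbitrary Segre characteristics $k_1\le\cdots\le k_s$. The case of equal block sizes $k_i=k_j$ is the most delicate, since there the Sylvester kernel is largest and the chosen row or column positions threaten to overlap with the image; a careful entry-wise inspection (or an induction on $s$) is needed to confirm transversality in general.
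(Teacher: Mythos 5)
The paper does not prove this theorem: it is Arnold's result, cited from \cite{arn} with a pointer to a constructive elementary-transformations proof in \cite{k-s}, so there is no in-paper argument to compare against. Your proposal reproduces Arnold's own transversality route: the tangent space to the similarity orbit at $J$ is $[J,M_n]$, any linear complement $V$ yields a miniversal deformation via the implicit function theorem, $\dim V=\dim C(J)$ equals the codimension, and the Sylvester operator collapses cross-eigenvalue blocks so one may reduce to a single nilpotent $J$. Your star count $\sum_{i=1}^s(2s-2i+1)k_i=\dim C(J)$ is correct.

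The gap is the one you flag yourself, and it is not a cosmetic one: the verification that the specific pattern of \eqref{hju} (first-column stars on and above the block diagonal, last-row stars below) satisfies $V\cap[J,M_n]=0$. The dimension count only shows that \emph{some} complement of the right size exists, not that this particular choice of positions works, and the sentence "the positions chosen in \eqref{hju} are precisely outside this pattern" is exactly what the theorem asserts, not something one may cite. Since $J$ is block-diagonal, $[J,T]$ does decouple into the blockwise Sylvester maps $T_{ij}\mapsto J_{k_i}(0)T_{ij}-T_{ij}J_{k_j}(0)$, so your reduction to a block-by-block check is sound; what remains is an explicit inspection for each block shape and position. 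That check is routine (for a $k\times k$ block the Sylvester image is cut out by the vanishing of the $k$ sub-diagonal sums $\sum_q N_{q+m,q}=0$, $m=0,\dots,k-1$, which immediately kills a first column or a last row; the rectangular cases are similar), and your worry about $k_i=k_j$ turns out to be harmless, but until it is written out the proposal is a correct outline of Arnold's proof rather than a proof.
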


Thus, the family of matrices \eqref{nyx} is a miniversal deformation of the Jordan matrix \eqref{kje}.

A constructive proof of Theorem \ref{teo2} by elementary transformations is given by Klimenko and Sergeichuk \cite{k-s}.
Many applications of miniversal deformations are given by Mailybaev \cite{mai1999,mai2000,mai2001}; he constructs a smooth similarity transformation \eqref{tef} in the form of Taylor series.
 The radius of a neighborhood of $J$ in which
all matrices $J+X$ are reduced
to the form \eqref{nyx} by transformations \eqref{tef} is calculated in \cite{bov}, in which Theorem \ref{teo2} is extended to matrices over the field of $p$-adic numbers.

A miniversal deformation of complex matrix pencils was constructed by
Edelman, Elmroth, and  K\r{a}gstr\"{o}m in the article \cite{kag}, which was awarded the SIAM Linear Algebra Prize 2000 for the most outstanding paper published in 1997--1999. However,
their miniversal deformations contain repeating parameters, which complicates their use in the proof of Theorem I. We use the simpler miniversal deformations constructed by Garc\'{\i}a-Planas and Sergeichuk \cite[Theorem 4.1]{gar_ser}.

Denote by $Z_{pq}$ the $p\times q$ matrix with $p\le q$, in which the first $\max\{q-p,0\}$ entries of the first row are the stars and the other entries are zeros:
\begin{equation*}       \label{3.1a}
Z_{pq}:=\mat{*&\dots&*&0&\dots&0\\
&&&&\ddots&\vdots\\&0&&&&0}
\end{equation*}
(we usually omit the indices $p$ and $q$).

\begin{theorem}[{\cite[Theorem 4.1]{gar_ser}}]      \label{first}
Let $\mc A$ be the Kronecker pair \eqref{hwm}, in which $\lambda _1,\dots,\lambda _{t-1}\in\mathbb C$ are distinct and $\lambda _t=\infty$.
Then all matrix pairs $\mc A+\mc X$
that are sufficiently close to $\mc A$ can
be simultaneously reduced by some equivalence
transformation
\begin{equation}\label{ehf}
\mc A+\mc X\mapsto
R(\mc X)^{-1}(\mc A+\mc X)
S(\mc X),\quad\begin{matrix}
\text{the matrices $R(\mc X)$ and $S(\mc X)$}\\
\text{are analytic at $(0,0)$,}\\
R(0,0)=I\text{ and } S(0,0)=I,
\end{matrix}
\end{equation}
to the form
\begin{equation}\label{aaa}
\left(
\left[
\begin{MAT}(@)[0.4pt]{cccc}
   \ma{L^T_{m_1}\hspace{-18pt}&&&0
   \\&L^T_{m_2}\hspace{-18pt}
   \\&&\ddotc\hspace{-10pt}\\&&&L^T_{m_{\underline s}}}
&0
&\,\ma{\da\\\da\\\vdots\\\da}
&\ma{\,\,\ra&\!\!...&\ra}
      \\
&\,I^{\rule{0pt}{6pt}}_{\rule{0pt}{6pt}}\,
&0&0
      \\
&&\widetilde J_0^{\rule{0pt}{10pt}}
&\ma{\,\,\ra&\!\!...&\ra}
      \\
0&&&\ma{L_{n_{\up s}}\hspace{-15pt}&&&0
\\&\ddotc\hspace{-10pt}
\\&&\!\!\! L_{n_2}\hspace{-10pt}
\\&&&\!\!\!L_{n_1}}
\addpath{(1,3,2)rrr}
\addpath{(3,4,2)ddd}
\addpath{(1,4,2)ddrrr}
\addpath{(2,4,2)dddrr}
\\
\end{MAT}
\right],
\left[
\begin{MAT}(@)[0.4pt]{cccc}
   \ma{R^T_{m_1}\!\!\!&\!\!\!Z\!\!\!&\!\!\!...\!\!\!&\!\!\!Z
   \\&\!\!\!R^T_{m_2}\!\!\!&\!\!\!\ddotc\!\!\!&\vdots
   \\&&\!\!\!\ddotc\!\!\!&\!\!\!Z
   \\&&&\!\!\!R^T_{m_{\underline s}}}
&\,\ma{\ua\\\ua\\\vdots\\\ua}
&0
&\ma{\ua\\\ua\\\vdots\\\ua}
      \\
&\widetilde J^{\rule{0pt}{10pt}}
&0&\ma{\ \la&\,...\,&\la}
      \\
&&\,I^{\rule{0pt}{6pt}}_{\rule{0pt}{6pt}}\,&0
      \\
0&&&   \ma{R_{n_{\up s}}\!\!\!&\!Z^T\!\!&\!\!\!\!...\!\!\!&Z^T\!
   \\&\!\!\!\ddotc\!\!\!&\!\!\ddotc\!\!&\!\!\vdots\!
   \\&&\!\!\!\!\!R_{n_2}\!\!\!\!&\!Z^T\!
   \\&&&\!\!\!R_{n_1}\!}
\addpath{(1,3,2)rrr}
\addpath{(3,4,2)ddd}
\addpath{(1,4,2)ddrrr}
\addpath{(2,4,2)dddrr}
\\
\end{MAT}
\right]
\right)
\end{equation}
in which
\[
\wt
J:=\bigoplus\limits_{i=1}^{t-1}
\wt J_{k_{i1},\dots,k_{is_i}}(\lambda _i)
,\qquad
\wt J_0:=\wt J_{k_{t1},\dots,k_{ts_t}}(0)
\]
(see \eqref{hju}) and the stars are replaced by complex numbers that depend
analytically on the entries of the pair $\mc X$. The
number of stars is minimal that can be
achieved by equivalence transformations of the form
\eqref{ehf}.
\end{theorem}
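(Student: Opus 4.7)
The plan is to verify that the family \eqref{aaa} is a transversal slice to $\operatorname{orb}(\mc A)$ at $\mc A$ of minimal dimension, and then to invoke Arnold's transversality principle to convert transversality into the existence of the smoothly varying equivalence transformations in \eqref{ehf}. Throughout, the group $GL_m(\mb C)\times GL_n(\mb C)$ acts on $M_{m,n}(\mb C)\times M_{m,n}(\mb C)$ by $(S,R)\cdot(A,B)=(S^{-1}AR,\,S^{-1}BR)$, and I would work with the induced infinitesimal action.

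First, I would compute the tangent space to $\operatorname{orb}(\mc A)$ at $\mc A=(A,B)$, which is the image of
\[
\tau\colon M_{m,m}(\mb C)\times M_{n,n}(\mb C)\to M_{m,n}(\mb C)\times M_{m,n}(\mb C),\qquad (P,Q)\mapsto (AQ-PA,\,BQ-PB).
\]
A miniversal deformation corresponds to any affine complement to $\operatorname{im}(\tau)$ through $\mc A$; a \emph{minimal} such complement has dimension equal to $2mn-\dim\operatorname{im}(\tau)$. What must be shown is (a) that the subspace of pairs described by the stars in \eqref{aaa} is complementary to $\operatorname{im}(\tau)$, and (b) that its dimension equals this codimension.

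Second, I would reduce the verification of (a) to a block-by-block analysis by partitioning $P,Q,X,Y$ conformally with the decomposition of $\mc A$ into indecomposable direct summands $\mc A_1,\ldots,\mc A_u$ used in \eqref{hwm}. The map $\tau$ then splits into a family of smaller maps $\tau_{ij}$ governing the off-diagonal perturbation $(X_{ij},Y_{ij})$, given by $(P_{ij},Q_{ij})\mapsto(A_jQ_{ij}-P_{ij}A_i,\,B_jQ_{ij}-P_{ij}B_i)$; the diagonal contributions $\tau_{ii}$ account for the internal star patterns inside each block. I would then treat each ordered pair of summand types in turn:
(i) $\mc L_m^T$ with $\mc L_n^T$;
(ii) $\mc L_m$ with $\mc L_n$;
(iii) $\mc L_m^T$ with $\mc L_n$;
(iv) $\mc L^T$ (or $\mc L$) with $\mc D(\lambda)$;
(v) $\mc D_m(\lambda)$ with $\mc D_n(\mu)$ for $\lambda\ne\mu$;
(vi) $\mc D_{k_{ij}}(\lambda_i)$ with $\mc D_{k_{ij'}}(\lambda_i)$.
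Case (vi) for a finite eigenvalue is exactly Arnold's Theorem \ref{teo2}, giving the $\widetilde J$ pattern \eqref{hju}; case (v) gives zero cokernel because the Sylvester equation $J_m(\lambda)Q-PJ_n(\mu)=Z$ is uniquely solvable when $\lambda\ne\mu$, explaining why no stars appear between different eigenvalue groups; cases (i)--(iv) reduce to triangular systems in the entries of $P_{ij},Q_{ij}$ that can be solved by back-substitution using the explicit shapes of $L_n,R_n$, leaving as the only obstructions the star positions prescribed by the symbols $Z$, $\ua$, $\da$, $\ra$, $\la$ in \eqref{aaa}. The ordering in \eqref{hwm} (the $\mc L^T$'s, then the $\mc D$'s, then the $\mc L$'s) is chosen precisely so that the resulting stars assemble into the block shape displayed.

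Third, once the slice is shown to be complementary, I would apply the implicit function theorem to the map $(P,Q,\mc S)\mapsto (I-P)(\mc A+\mc S)(I+Q)$, where $(P,Q)$ ranges over a complement to the stabilizer Lie algebra $\ker(\tau)$ and $\mc S$ ranges over the slice \eqref{aaa}; since this map is a local diffeomorphism at the origin, inverting it produces the analytic $R(\mc X),\,S(\mc X)$ with $R(0)=I$, $S(0)=I$ that realize \eqref{ehf}. Minimality of the number of stars is automatic because, by construction, the dimension of the slice equals $\operatorname{codim}\operatorname{orb}(\mc A)$.

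The hard part is step two, and specifically case (iii), in which an $\mc L_m^T$ block interacts with an $\mc L_n$ block: the Sylvester-type system involves $L_m^T,R_m^T$ on one side and $L_n,R_n$ on the other, and identifying the exact one-dimensional obstruction (the first row of the $Z$-block) requires a careful simultaneous elimination on both equations $X R_n - L_m^T Y=Z_1$ and $X L_n - R_m^T Y =Z_2$. The mixed $\mc L^T$--$\mc D$ and $\mc L$--$\mc D$ cases (iv) are analogous but one-sided, and case (i)--(ii) are obtained from case (iii) by duality; once the one difficult case is done the rest is bookkeeping, and the pattern of stars in \eqref{aaa} is recovered by collecting the cokernels over all ordered pairs of blocks.
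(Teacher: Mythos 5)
The paper does not prove Theorem~\ref{first}; it imports it verbatim from Garc\'{\i}a-Planas and Sergeichuk \cite[Theorem 4.1]{gar_ser}, so there is no in-text argument to compare yours against. Your outline — compute the tangent space $\operatorname{im}\tau$, verify the starred subspace in \eqref{aaa} is a complement, then invoke the implicit-function/transversality argument to produce the analytic $R(\mc X)$, $S(\mc X)$ — is indeed the standard route and is, in spirit, how the cited source proceeds.

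However, what you have written is a plan rather than a proof, and the plan's crux is deferred precisely where the theorem has content. All the information in \eqref{aaa} is the exact placement of stars, and getting that placement requires, for every ordered pair of indecomposable summands $\mc A_i,\mc A_j$, solving the pair of Sylvester-type systems $A_i Q_{ij}-P_{ij}A_j=X_{ij}$, $B_i Q_{ij}-P_{ij}B_j=Y_{ij}$ and identifying exactly which $(X_{ij},Y_{ij})$ lie outside the range. You handle $\mc D(\lambda)$–$\mc D(\mu)$ (Arnold's theorem plus uniqueness of solutions of the Sylvester equation for $\lambda\neq\mu$), but you explicitly leave the $\mc L$-type interactions to ``careful simultaneous elimination'' and ``bookkeeping.'' Worse, the one concrete assertion you make about the deferred case is wrong: for the $\mc L_m^T$–$\mc L_n$ interaction (your case~(iii)), the cokernel has dimension $m+n$, realized by the $\ra$ stars (first matrix) and $\ua$ stars (second matrix) in the $(1,4)$-corner block of \eqref{aaa}; it is not a ``one-dimensional obstruction'' and it has nothing to do with ``the first row of the $Z$-block.'' The $Z$-block with a single starred row of length $\max(q-p,0)$ governs the $\mc L^T$–$\mc L^T$ interactions (your case~(i), and by transposition $\mc L$–$\mc L$). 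You also have the indices in $\tau_{ij}$ transposed: for block-diagonal $A=\bigoplus A_i$ the $(i,j)$ block of $AQ-PA$ is $A_iQ_{ij}-P_{ij}A_j$, not $A_jQ_{ij}-P_{ij}A_i$. Until the off-diagonal cokernel computations for cases (i)–(iv) are actually carried through — and the misidentification of the star pattern corrected — the proof has a genuine gap: the star pattern in \eqref{aaa} is asserted, not derived.
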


Note that the number of summands in \eqref{hwm} is $\ge 1$; i.e., the summands of each of the types and the corresponding horizontal and vertical strips in \eqref{aaa} can be absent.

By a \emph{miniversal pair} we mean a matrix pair that is obtained from \eqref{aaa} by replacing its stars by complex numbers. We use the Frobenius matrix norm
\begin{equation}\label{nor}
\|[a_{ij}]\|:=\sqrt{
\sum\nolimits_{ij}|a_{ij}|^2},\qquad a_{ij}\in\mathbb C.
\end{equation}
For  a matrix pair $\mc A=(A,A')$, we write $\|\mc A\|:=\|A\|+\|A'\|$ and define its neighborhood
\[
N_r(\mc A):=\{\mc B\,|\,\|\mc B-\mc A\|<r\},
\]
in which $r$ is a positive real number.

\begin{remark}\label{rrs}
Let $\mc A$ be the matrix pair from Theorem \ref{first}. Let $N_r(\mc A)$ be its neighborhood, in which all pairs are reduced to the form \eqref{aaa} by the analytic transformation $\mc A+\mc X\mapsto\mc A+ \widehat{\mc X}$ defined in \eqref{ehf}.
Since it is analytic, there is a positive $c\in\mathbb R$ such that
\[
\|\widehat{\mc X}\|\le c\|\mc X\|\qquad\text{for all }\mc A+\mc X\in N_r(\mc A).
\]
Hence, each pair in $N_r(\mc A)$ is equivalent to a miniversal pair from $N_{cr}(\mc A)$. Thus, if a Kronecker pair $\mc B$ is equivalent to a pair in an arbitrarily small neighborhood of $\mc A$, then $\mc B$ is equivalent to a \emph{miniversal} pair in an arbitrarily small neighborhood of $\mc A$. We use this fact in the proof of Theorem I.
\end{remark}

Miniversal deformations have been constructed for matrices under congruence \cite{f_s} and *congruence \cite{def-sesq},
for pairs of symmetric matrices under congruence \cite{dmy2},
for pairs of skew-symmetric matrices under congruence \cite{dmy1}, and for matrix pairs under contragredient equivalence \cite{gar_ser}.

\section{A direct proof of a weak form of Theorem IV}\label{nfl}

Due to the following theorem, which is a weak form of Theorem IV, it suffices to find immediate successors for all pairs \eqref{hwm} with two direct summands and for all matrix pairs of the form $
\mc D_{k_{1}}(\lambda)\oplus\dots\oplus \mc D_{k_{l}}(\lambda)$.

\begin{theorem}      \label{th}
Let $\mc A=([A_{ij}],[A'_{ij}])$ be the
Kronecker pair \eqref{hwm} partitioned such that the pairs of diagonal blocks $(A_{11},A_{11}'),(A_{22},A_{22}'),\ldots$ are the direct summands \eqref{weq} of \eqref{hwm}.
Write
\[
\mc D_i:=
\mc D_{k_{i1}}(\lambda_1)\oplus\dots\oplus \mc D_{k_{is_{i}}}(\lambda_i), \qquad i=1,\dots,t.
\]
 Then each immediate successor of $\ll\mc A\rr$ is the orbit of some matrix pair obtained from
$\mc A$ by an
arbitrarily small
perturbation of only one pair $(A_{ij},A'_{ij})$ with $i<j$ that is not contained in  $\mc D_1,\dots,\mc D_t$, or of only one pair $(A_{ij},A'_{ij})$ from $\mc D_1,\dots,\mc D_t$.
\end{theorem}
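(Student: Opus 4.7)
My plan is to combine the miniversal deformation from Theorem \ref{first} with Remark \ref{rrs} and the immediate-successor hypothesis. By Remark \ref{rrs}, any immediate successor $\ll\mc B\rr$ of $\ll\mc A\rr$ is realized by a miniversal pair $\mc M=\mc A+\mc X$ in every neighborhood of $\mc A$. The stars of $\mc X$ in the form \eqref{aaa} fall naturally into groups indexed by off-diagonal block position in the partition \eqref{weq}: stars inside a single $\wt J_{k_{i1},\dots,k_{is_i}}(\lambda_i)$ link two Jordan blocks with eigenvalue $\lambda_i$ and so lie within one $\mc D_i$, while all remaining stars correspond to interactions between different summands of \eqref{weq}. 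My aim is to show that the off-diagonal part of $\mc X$ may be chosen to occupy only one such block group.

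First I would decompose the off-diagonal part of $\mc X$ as $\mc X=\mc X^{(1)}+\dots+\mc X^{(N)}$, where $\mc X^{(\ell)}$ is supported in the $\ell$-th off-diagonal block position, and study the constructible orbit function $\psi(t_1,\dots,t_N):=\ll\mc A+\sum_\ell t_\ell\mc X^{(\ell)}\rr$ on a small polydisc $\Omega$ about $0\in\mb C^N$. Two facts about $\psi$ are central. First, because the miniversal parameter space is a transverse slice to the orbit of $\mc A$, every value of $\psi$ on $\Omega$ lies in an orbit $\ge\ll\mc A\rr$. Second, by upper semi-continuity, the set $\{t\in\Omega:\psi(t)\ge\ll\mc B\rr\}$ is closed, and $\psi$ is constant on a Zariski-open stratum of any affine subspace. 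Using these facts together with $\psi(1,\dots,1)=\ll\mc B\rr$, I would produce $t^\ast\in\Omega$ arbitrarily close to $(1,\dots,1)$, all of whose coordinates are nonzero, satisfying $\psi(t^\ast)=\ll\mc B\rr$ and lying in the open stratum where $\psi\equiv\ll\mc B\rr$.

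The crucial step is then to trace a path in $\Omega$ from $0$ to $t^\ast$ through corners of a generic subcube, activating coordinates one at a time. At each corner both bounds $\ll\mc A\rr\le\psi\le\ll\mc B\rr$ must hold: the lower bound is the first fact above, and for the upper bound I plan to use a scaling argument — setting some coordinates of $t^\ast$ to zero specializes $t^\ast$ along those coordinates, and a line-scaling analogue of the first fact (based at the partial-activation point rather than at $0$) shows the resulting orbit is a degeneration of $\ll\mc B\rr$, hence $\le\ll\mc B\rr$. By the immediate-successor property, every intermediate orbit then collapses to either $\ll\mc A\rr$ or $\ll\mc B\rr$, and there must be a single edge along which $\psi$ jumps; this edge activates exactly one coordinate $\ell_0$, so $\mc X^{(\ell_0)}$ alone realizes $\ll\mc B\rr$. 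Depending on whether block $\ell_0$ lies inside some $\mc D_k$ or crosses different summand types of \eqref{weq}, we obtain the two alternatives in the statement. The hard part will be making the upper-bound specialization rigorous: the set $\{\psi\le\ll\mc B\rr\}$ is not a priori closed, since orbits incomparable to $\ll\mc B\rr$ could in principle appear at intermediate corners, and ruling this out along the chosen path — likely by selecting the subcube sufficiently generically and invoking the constructibility of the orbit stratification together with the block-triangular structure of \eqref{aaa} — is the technical heart of the argument.
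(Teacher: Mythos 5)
Your proposal attacks the problem via a parametric family and genericity, which is genuinely different from the paper's argument, but it has two gaps that I do not think can be closed along the lines you sketch.

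The first is the one you flag yourself: at an intermediate corner $t^{(k)}=(t^*_1,\dots,t^*_k,0,\dots,0)$ you need $\psi(t^{(k)})\le\ll\mc B\rr$, and you propose to get this by specialization/line-scaling from $t^*$. For this to say $\psi(t^{(k)})\le\psi(t^*)$ you would need the point $t^*$ to be the \emph{generic} point of the line $s\mapsto(t^*_1,\dots,t^*_k,st^*_{k+1},\dots,st^*_N)$; otherwise the line limit only tells you $\psi(t^{(k)})$ lies below the generic orbit on that line, which could be strictly larger than $\ll\mc B\rr$. Choosing $t^*$ to be simultaneously generic for every specialization line you will use (and then repeating this for all coordinate orderings, since you may have to reorder to move the jump to the first edge) is not obviously possible, and the constructibility of the orbit stratification does not by itself give it. The paper avoids this entirely: it rescales a sequence $\mc B_i\to\mc A$ so the off-diagonal parts have norm $1$, extracts a compact-set limit $(Q,Q')$, then picks the specific ``left-lowest'' nonzero block $(X,X')$ of $(Q,Q')$. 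That minimality of position is precisely what makes the conformal block-diagonal rescaling $\bigl(I_a\oplus\varepsilon^{-1}I\oplus\varepsilon^{-2}I_c\bigr)\mc X\bigl(I_b\oplus\varepsilon I\oplus\varepsilon^2I_d\bigr)$ kill everything except $(X,X')$ as $\varepsilon\to0$. Your traversal, by contrast, must kill an arbitrary subset of blocks at each corner, and for an arbitrary subset no such block-diagonal scaling exists; the scaling exponents $a_i$ must satisfy $a_j-a_i=0$ on preserved blocks and $a_j-a_i>0$ on killed ones, a linear system that the block-triangular structure makes feasible only for the left-lowest selection.

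The second gap concerns your final inference. Suppose you found the jump edge from $t^{(\ell_0-1)}$ to $t^{(\ell_0)}$. The start of this edge is $\mc A+\sum_{j<\ell_0}t^*_j\mc X^{(j)}$, which by hypothesis has orbit $\ll\mc A\rr$ but is not equal to $\mc A$. Writing it as $S^{-1}\mc A R$ with $S,R$ close to $I$, you get $\mc A+t^*_{\ell_0}\,S\mc X^{(\ell_0)}R^{-1}\in\ll\mc B\rr$, but $S\mc X^{(\ell_0)}R^{-1}$ is generally supported on \emph{many} block positions, not one, so the conclusion of the theorem does not follow. The paper's $\mc Y$ is constructed as $\mc A$ itself plus a single block $(X,X')$, and the separate tangent-space argument (that $\Delta\mc Y_\varepsilon=\varepsilon\Delta\mc Y_1$ cannot be of the form $R_\varepsilon\mc A+\mc AS_\varepsilon+R_\varepsilon\mc AS_\varepsilon$ with Lipschitz bounds $\|R_\varepsilon\|,\|S_\varepsilon\|<c\varepsilon$) is what proves $\ll\mc Y\rr\neq\ll\mc A\rr$; your proposal has no analogue of this step. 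You would also need a version of the paper's Case 2 to treat the block-diagonal subcase in which only a diagonal superblock is perturbed. In short: the miniversal-deformation and immediate-successor ingredients are the right ones, but the compactness limit, the left-lowest block selection, and the tangent-space nondegeneracy argument are the technical heart of the paper's proof and your cube-corner scheme does not supply substitutes for them.
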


\begin{proof}
We consider the partition of the matrices of $\mc A=(A,A')$ into the blocks $A_{ij}$ and $A'_{ij}$. We also consider the partition of $A$ and $A'$ into the \emph{superblocks} obtained by  joining all strips that correspond to the same eigenvalue. Thus, the diagonal superblocks form the pairs
\begin{equation*}\label{y4r}
\mc L^T_{m_1},\ \dots,\ \mc L^T_{m_{\underline s}},\
\mc D_{1},\ \dots,\ \mc D_t, \
\mc L_{n_1},\ \dots,\ \mc L_{n_{\up s}}.
\end{equation*}

Let $\ll\mc B\rr$ be an immediate successor of $\ll\mc A\rr$. Then there exists a sequence
\begin{equation}\label{seq}
\mc B_1=(B_1,B'_1),\
\mc B_2=(B_2,B'_2),\ \dots
\end{equation}
of pairs
from  $\ll\mc B\rr$ that
converges to $\mc A=(A,A')$.
All matrix pairs close enough
to $\mc A$ are
reduced to the miniversal form \eqref{aaa}
by a smooth equivalence
transformation that preserves $\mc A$. Hence, all pairs
\eqref{seq} can be taken in the miniversal form \eqref{aaa}, which is
\emph{upper superblock triangular.}

We say
that a block (superblock) of $B_i$ or $B'_i$ in \eqref{seq} is
\emph{perturbed} if it
differs from the
corresponding block (superblock) of
$A$ or $A'$.
\bigskip

\noindent
\emph{Case
1: There are
infinite many
pairs \eqref{seq}, in
which at least one
upper diagonal
superblock is
perturbed.}

Then there is a
partition
\begin{equation}\label{iod}
\mc A= \left(
\begin{bmatrix}
  M & O \\
  0  & N
\end{bmatrix},
\begin{bmatrix}
  M' & O' \\
  0  & N'
  \end{bmatrix}\right)\quad\text{($O$ and $O'$ are zero)}
\end{equation}
that is coarser than the partition into superblocks, with the property:
$O$ or $O'$ is
perturbed infinitely
many times in the
sequence \eqref{seq}.
We can suppose that
$O$ or $O'$ is
perturbed in \emph{each}
pair \eqref{seq}.

Let $m\times m'$ be
the size of $(M,M')$.
Partition
\[
\mc B_i= \left(
\begin{bmatrix}
  M_i & O_i \\
  0  & N_i
\end{bmatrix},
\begin{bmatrix}
  M'_i & O'_i \\
  0  & N'_i
  \end{bmatrix}\right)
\]
conformally with \eqref{iod}, and write $\xi_i
:= (\| O_i \| + \|
O'_i \|)^{-1} $, in which $\|\cdot\|$ is the Frobenius matrix norm \eqref{nor}. Define the equivalent
pair
\begin{align*}
\widehat{\mc B}_i:=&
\begin{bmatrix}
  I_m & 0 \\
  0  & \xi_i^{-1}I
\end{bmatrix}
\mc B_i
\begin{bmatrix}
  I_{m'} & 0 \\
  0  &  \xi_iI
\end{bmatrix}
=
 {
\left( \begin{bmatrix}
  M_i & \xi_i O_i \\
  0  & N_i
\end{bmatrix},
\begin{bmatrix}
  M'_i & \xi_i O'_i \\
  0  & N'_i \\
\end{bmatrix}
  \right)}\in \ll{\cal B}\rr.
\end{align*}
Then $\| \xi_iO_i
\| + \| \xi_iO'_i
\|=1$, and so the set
of matrix pairs
$(\xi_iO_i,\xi_iO'_i)$
is compact. Chose a
fundamental
subsequence
$(\xi_{i_{k}}O_{i_{k}},
\xi_{i_{k}}O'_{i_{k}})$
and denote
its limit by $(Q,Q')$. Consider
the pair
\begin{equation*}\label{lso}
{\mc X}
:= \left(
\begin{bmatrix}
  M & Q \\
  0  & N
\end{bmatrix},
\begin{bmatrix}
  M' & Q' \\
  0  &  N'
\end{bmatrix}
\right).
\end{equation*}
We have $\ll\mc B\rr
\ge \ll\mc X\rr$
since all $\widehat{\mc B}_{i_k}
\in\ll\mc B\rr$ and
$\widehat{\mc B}_{i_k}
\to
\mc X$
as $k\to\infty$.

Make additional partitions of
$\mc X$ into blocks
conformally to the partition of
$\mc A=([A_{ij}],[A'_{ij}])$ in the theorem. Choose in
$(Q,Q')$ the nonzero pair
$(X,X')$ of conformal
blocks $X$ and $X'$ such that all columns of $Q$ to the
left of $X$ and all blocks of $Q$
exactly under $X$ are zero, and
all columns of $Q'$ to
the left of $X'$ and all blocks  of $Q'$
exactly under $X'$ are
zero:
\[\setlength{\arraycolsep}{2.5pt}
\mc X=
\left(\left[\begin{array}{ccc|ccc}
 M_1&&0&0 & * & * \\&M_2&&0 & X & *
 \\0&&M_3&0 & 0 & *
               \\\hline
 &&&N_1&&0
 \\&0&&&N_2&
 \\&&&0&&N_3
\end{array}
\right],
\left[\begin{array}{ccc|ccc}
 M'_1&&0&0 & * & * \\&M'_2&&0 & X' & *
 \\0&&M'_3&0 & 0 & *
               \\\hline
 &&&N'_1&&0
 \\&0&&&N'_2&
 \\&&&0&&N'_3
\end{array}
\right]\right).
\]
Write
\begin{equation*}\label{bbv1}
\begin{split}
{\mc Y}&=
\left(\left[\begin{array}{c|c}
 M&Y\\\hline 0&N
\end{array}\right],
\left[\begin{array}{c|c}
 M'&Y'\\\hline 0&N'
\end{array}\right]\right)
         \\&:=
\setlength{\arraycolsep}{3pt}
\left(\left[\begin{array}{ccc|ccc}
 M_1&&0&0 &0& 0 \\&M_2&&0 & X &0
 \\0&&M_3&0 & 0 &0
               \\\hline
 &&&N_1&&0
 \\&0&&&N_2&
 \\&&&0&&N_3
\end{array}
\right],
\left[\begin{array}{ccc|ccc}
 \setlength{\arraycolsep}{1pt}
 M'_1&&0&0 &0&0 \\&M'_2&&0 & X' &0
 \\0&&M'_3&0 & 0 &0
               \\\hline
 &&&N'_1&&0
 \\&0&&&N'_2&
 \\&&&0&&N'_3
\end{array}
\right]\right).
\end{split}
\end{equation*}
Then
\[
(I_a\oplus
\varepsilon^{-1}
I\oplus
\varepsilon^{-2}I_c)
\mc X
( I_b\oplus
\varepsilon I\oplus
\varepsilon^{2}
I_d)
\   \xrightarrow{\text{
as } \varepsilon  \to 0\ }
\ \mc Y,
\]
in which $a\times b$ is the size of $(M_1,M_1')$ and $c\times d$ is the size of $(N_3,N_3')$.
This implies that $\ll\mc X\rr\ge \ll\mc
Y\rr$.
Since
\begin{equation*}\label{kut1}
\mc Y_{\varepsilon}:=
\begin{bmatrix}
  I_{m} & 0 \\
  0  & \varepsilon^{-1}I
\end{bmatrix}
\mc Y
\begin{bmatrix}
  I_{m'} & 0 \\
  0  & \varepsilon I \\
\end{bmatrix}
=
\left(\mat{M&\varepsilon Y\\ 0&N},
\mat{
 M'&\varepsilon Y'\\ 0&N'}\right)
\   \xrightarrow{\text{\,as\,} \varepsilon  \to 0\ }
\
\mc A,
\end{equation*}
we have that $\ll\mc Y\rr \ge \ll\mc A\rr$. Therefore, $\ll\mc B\rr
\ge\ll\mc
X\rr\ge \ll\mc Y\rr \ge \ll\mc A\rr$.

In order to prove that $\ll\mc Y\rr$ is a desired pair,
it suffices to prove that $\ll\mc Y\rr \ne \ll\mc A\rr$ (which implies $\ll\mc B\rr
=\ll\mc Y\rr > \ll\mc A\rr$ because $\ll\mc B\rr$ is an immediate successor of $\ll\mc A\rr$).

On the contrary, suppose that $\ll\mc Y\rr = \ll\mc A\rr$. Since $\mc Y_{\varepsilon}\sim\mc Y$, $\mc Y_{\varepsilon}\in\ll\mc A\rr$ for each $\varepsilon $. Hence there exist nonsingular matrices, which we take in the form $I+R_{\varepsilon}$ and $I+S_{\varepsilon}$, such that
\begin{equation*}\label{uth}
\mc Y_{\varepsilon}=(I+R_{\varepsilon})\mc A(I+S_{\varepsilon})=\mc A+R_{\varepsilon}\mc A
+\mc AS_{\varepsilon}
+R_{\varepsilon}\mc AS_{\varepsilon}.
\end{equation*}
By Lipschitz's property for matrix pairs (see \cite{rod} or \cite{ala}), we can chose the matrices $R_{\varepsilon}$, $S_{\varepsilon}$ and a positive constant $c\in\mathbb R$ such that
\begin{equation}\label{msx}
\|R_{\varepsilon}\|<\varepsilon c,\qquad\|S_{\varepsilon}\|<\varepsilon c
\end{equation}
for all $\varepsilon$, in which $\|\cdot\|$ is the Frobenius matrix norm \eqref{nor}.

The pair $\mc Y_{\varepsilon}$ is in the miniversal form \eqref{aaa} for \eqref{iod} since all nonzero entries of $Q$ and $Q'$ are at the places of some stars.
By the construction of the miniversal deformation in \cite[Theorem 4.1]{gar_ser},
\begin{equation}\label{ajs}
\Delta \mc Y_{\varepsilon}:=\mc Y_{\varepsilon}-\mc A=
\varepsilon\left(\left[\begin{array}{c|c}
 0&\!\!\rule{0pt}{20pt}
 \begin{smallmatrix}\rule{0pt}{3pt}0&0&0
 \\0& X&0\\0&0&0\\[3pt]
 \end{smallmatrix}\!\!\!\\\hline 0&0
\end{array}\right],
\left[\begin{array}{c|c}
 0&\!\!\rule{0pt}{20pt}
 \begin{smallmatrix}\rule{0pt}{3pt}0&0&0
 \\0& X'&0\\0&0&0\\[3pt]
 \end{smallmatrix}\!\!\!\\\hline 0&0
\end{array}\right]\right)
=R_{\varepsilon}\mc A
+\mc AS_{\varepsilon}
+R_{\varepsilon}\mc AS_{\varepsilon}
\end{equation}
does not belong to the space
\[
\mathbb T:=\{R\mc A+\mc AS\,|\,R\text{ and $S$ are nonsingular matrices}\}
\]
(which is the tangent space at $\mc A$ to the orbit of $\mc A$).
Thus,
\begin{equation*}\label{fgq}
d_{\varepsilon}:=\min\big\{\|\mc Y_{\varepsilon}-\mc A-R\mc A-\mc AS\|\,\big|\,
\text{$R$ and $S$ are square matrices}\big\}\ne 0.
\end{equation*}
(which is the distance from $\mc Y_{\varepsilon}$ to the affine space $
\{\mc A+R\mc A+\mc AS\,|\,R,S\}$).

Let $R'$ and $S'$ be such that
\[
d_1=\|\mc Y_1-\mc A-R'\mc A-\mc AS'\|=
\|\Delta\mc Y_1-R'\mc A-\mc AS'\|.
\]
By \eqref{ajs}, $\Delta\mc Y_{\varepsilon}=\varepsilon\Delta\mc Y_1$,
and so
$\varepsilon d_{1}=
\|\Delta\mc Y_{\varepsilon}-(\varepsilon R')\mc A-\mc A(\varepsilon S')\|=d_{\varepsilon}.$ By \eqref{msx},
\[
\varepsilon d_1\le \|\Delta\mc Y_{\varepsilon}-R_{\varepsilon}\mc A-\mc AS_{\varepsilon}\|=\|R_{\varepsilon}\mc AS_{\varepsilon}\|\le
\|R_{\varepsilon}\|\|\mc A\|\|S_{\varepsilon}\|\le \varepsilon^2 c^2\|\mc A\|.
\]
This leads to a contradiction since $\varepsilon d_1\le \varepsilon^2 c^2\|\mc A\|$ does not hold for a sufficiently small $\varepsilon $.

\bigskip

\noindent\emph{Case
2: There is only a finite number of
pairs \eqref{seq} in
which at least one
upper diagonal
superblock is perturbed.}

Let $\mc A^{(1)}, \mc A^{(2)},\dots$ be the pairs of diagonal superblocks of $\mc A$, then $\mc A=\mc A^{(1)}\oplus \mc A^{(2)} \oplus \cdots$.
We can suppose that
all upper diagonal
superblocks are not
perturbed, and so $
\mc B_i:=\mc B^{(1)}_i\oplus \mc B^{(2)}_i \oplus \cdots,$
in which $\mc B^{(1)}_i, \mc B^{(2)}_i,\dots$ are the pairs of perturbed diagonal superblocks
of $\mc B_i$ in \eqref{seq}.

Since all $\mc B_i\sim \mc B$, we can suppose that $\mc B_1^{(l)}\sim\mc B_2^{(l)}\sim\cdots$ for each $l$. Since $\mc A\nsim\mc B$, $\mc A^{(l)}\nsim\mc B_1^{(l)}\sim\mc B_2^{(l)}\sim\cdots$ for some $l$.
Then all
\[
\mc C_i:=\mc A^{(1)}\oplus\dots \oplus \mc A^{(l-1)}\oplus \mc B^{(l)}_i \oplus \mc A^{(l+1)}\oplus\cdots
\]
are equivalent and their orbit $\ll{\mc C}_1\rr> \ll\mc A\rr$. Moreover, $\ll\mc B\rr\ge\ll{\mc C}_1\rr$ because
\[
\mc B^{(1)}_i\oplus\dots \oplus \mc B^{(l-1)}_i\oplus \mc B^{(l)}_1 \oplus \mc B^{(l+1)}_i\oplus\cdots\
\ \xrightarrow{\text{
as } i  \to \infty\ } \ \mc C_1.
\]
Since there is no
intermediate orbit
between $\ll{\cal A}\rr$ and
$\ll{\cal B}\rr$, we have that
$\ll{\cal B}\rr =
\ll{\cal C}_1\rr$.
\end{proof}

\section{Perturbations of direct sums of two indecomposable Kronecker pairs}\label{pairs}

\subsection{Perturbations of $\mc L^T_m \oplus\mc L^T_n$}

\begin{theorem}
\label{case1}
{\rm(a)}
The set of Kronecker
canonical forms of all pairs in a sufficiently small neighborhood of
\begin{equation}\label{(1)}
\mc L^T_m \oplus
\mc L^T_n,\qquad m\le n
\end{equation}
consists of the pairs
\begin{equation}\label{u5g}
\mc L^T_{m+r} \oplus
\mc L^T_{n-r},\qquad m+r\le n-r,\ r\ge 0.
\end{equation}

{\rm(b)} Each pair \eqref{u5g} with $r>0$ is equivalent to a pair of the form
\begin{equation}\label{vbf}
\left(
\mat{L^T_m&0\\0&L^T_n},
\mat{R^T_m&\D_r(\varepsilon)
\\0&R^T_n}
\right)
\end{equation}
(which is obtained by an arbitrarily small perturbation of \eqref{(1)}),
in which $\D_r(\varepsilon )$ is defined in \eqref{rdpd} and $\varepsilon$ is an arbitrary nonzero complex number.
\end{theorem}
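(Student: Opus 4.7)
The strategy is to reduce to the miniversal slice via Remark~\ref{rrs} and then compute the Kronecker canonical form (KCF) of each miniversal pair by a direct analysis of its left null space. By Theorem~\ref{first}, every pair near $\mc A:=\mc L^T_m\oplus\mc L^T_n$ is equivalent to a miniversal pair of the form
\[
\mc A_w:=\left(\mat{L^T_m & 0 \\ 0 & L^T_n},\; \mat{R^T_m & e_1w^T \\ 0 & R^T_n}\right),
\]
where $w\in\mb C^{n-1}$ is supported on the first $n-m-1$ coordinates (the slice is trivial when $n\le m+1$). Since \eqref{vbf} is precisely $\mc A_{\varepsilon e_r}$, parts~(a) and~(b) both reduce to computing the KCF of $\mc A_w$.

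Each diagonal block $L^T_k-\lambda R^T_k$ has full column rank for every $\lambda\in\mb C\cup\{\infty\}$ and the column supports of the two block-columns are compatible, so $\mc A_w(\lambda)$ has full column rank $m+n-2$ for every $\lambda$. Hence $\mc A_w$ has no right minimal indices and no elementary divisors, and it has exactly two left minimal indices $\epsilon_1\le\epsilon_2$ with $\epsilon_1+\epsilon_2=m+n-2$.

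Setting $u_k(\lambda):=(\lambda^{k-1},\dots,\lambda,1)^T$, the first block equation forces $v_{(1)}=c_1u_m$; substituting into the second block and solving a first-order recurrence yields a $\mb C(\lambda)$-basis of the left null space
\[
\alpha_1=\mat{u_m \\ v^{\mathrm{part}}},\qquad \alpha_2=\mat{0 \\ u_n},
\]
with $v^{\mathrm{part}}_j=\sum_{k=j}^{R}w_k\lambda^{m+k-j}$ for $j\le R$ and $v^{\mathrm{part}}_j=0$ otherwise, where $R$ is the largest index with $w_R\ne 0$ (and $R:=0$ if $w=0$). Thus $\deg\alpha_1=m+R-1$. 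Every polynomial left null vector is of the form $\beta=c_1\alpha_1+c_2\alpha_2$ with $c_1,c_2\in\mb C[\lambda]$, since the last entries of $u_m$ and $u_n$ both equal $1$.

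Under the hypothesis $m+r\le n-r$ of part~(b), a short degree argument rules out $\deg\beta<m+R-1$: the top $c_1u_m$ forces $\deg c_1\le R-1$, and cancelling the leading $\lambda^{\deg c_1+m+R-1}$ at position $m+1$ against $c_2\lambda^{n-1}$ would require $\deg c_2=\deg c_1+m+R-n$, which is negative since $\deg c_1\le R-1$ and $2R\le n-m$. Hence $\epsilon_1=m+R-1$, whence $\epsilon_2=n-R-1$ and the KCF of $\mc A_w$ equals $\mc L^T_{m+R}\oplus\mc L^T_{n-R}$; taking $w=\varepsilon e_r$ proves~(b). Part~(a) follows by running the same analysis for arbitrary admissible $w$ (for $R>(n-m)/2$ the polynomial combination $\alpha_2-(\lambda^{n-m-R}/w_R)\alpha_1$ exhibits a null vector of degree $n-R-1$, and a symmetric degree argument yields the same multiset $\{m+R-1,n-R-1\}$), and observing that $R$ ranges over $\{0,1,\dots,n-m-1\}$ to produce exactly the KCFs listed in \eqref{u5g}. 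I expect the main obstacle to be this degree bookkeeping, since it depends critically on the bound $2R\le n-m$ to preclude polynomial cancellations in the bottom entries.
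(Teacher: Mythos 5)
Your route is genuinely different from the paper's: after the common reduction to the miniversal slice (Theorem~\ref{first} and Remark~\ref{rrs}), the paper reduces each miniversal pair to Kronecker form by an explicit chain of simultaneous elementary transformations (Lemma~\ref{kkk} plus an iteration through successive miniversal forms), whereas you read the Kronecker form off invariants: full column rank at every $\lambda\in\mb C\cup\{\infty\}$ excludes right minimal indices and elementary divisors, so the form consists of exactly two blocks $\mc L^T_{\epsilon_1+1}\oplus\mc L^T_{\epsilon_2+1}$ with $\epsilon_1+\epsilon_2=m+n-2$, and the $\epsilon_i$ are the degrees of a minimal polynomial basis of the left null space (a standard fact you should cite, since the paper never uses it). Your basis $\alpha_1,\alpha_2$ is correct, the argument that polynomial null vectors have polynomial coefficients $c_1,c_2$ (via the last entries of the two blocks) is correct, and for part~(b), i.e.\ $w=\varepsilon e_r$ with $m+r\le n-r$, the degree argument does work: entry $m+1$ equals $c_1\varepsilon\lambda^{m+r-1}+c_2\lambda^{n-1}$, which is divisible by $\lambda^{m+r-1}$, so a null vector of degree $<m+r-1$ forces $\lambda^{n-m-r}\mid c_1$, impossible when $\deg c_1\le r-1\le n-m-r-1$; hence $\epsilon_1=m+r-1$, $\epsilon_2=n-r-1$, and \eqref{vbf} has Kronecker form $\mc L^T_{m+r}\oplus\mc L^T_{n-r}$, which is what (b) asserts.

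Part (a), however, contains a genuine error: it is not true that for a general admissible $w$ the minimal indices are $\{m+R-1,n-R-1\}$ with $R$ the largest index carrying a nonzero entry, and the combination $\alpha_2-(\lambda^{n-m-R}/w_R)\alpha_1$ does not have degree $n-R-1$ once $w$ has further nonzero entries (the lower terms of $v^{\mathrm{part}}$ do not cancel). Concretely, take $m=1$, $n=5$, $w=(0,1,1,0)$, so $R=3$: the vector $(\lambda^2,\,0,\,0,\,-\lambda^2,\,-\lambda-\lambda^2,\,-1-\lambda)$ is a polynomial left null vector of degree $2$, while there is none of degree $\le 1$ (the general null vector is determined by its first and last entries $\beta_1,\beta_6$, and its second entry is $\lambda^4\beta_6+(\lambda^3+\lambda^2)\beta_1$, which cannot have degree $\le1$ unless $\beta_1=\beta_6=0$); so the minimal indices are $\{2,2\}$ and the Kronecker form is $\mc L^T_3\oplus\mc L^T_3$, not the $\{3,1\}$ your rule predicts; also in this example your proposed combination $\alpha_2-\lambda\alpha_1$ has second entry $-\lambda^3$, of degree $3$. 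The Kronecker form of $\mc A_w$ depends on more of $w$ than $R$. Fortunately the theorem does not need the exact form for every $w$, and the repair stays inside your framework: for (a) you only need that every $\mc A_w$ has form \eqref{u5g}, which follows because $\epsilon_1\ge m-1$ (a null vector of degree $<m-1$ forces $c_1=0$ from entry $1$ and then $c_2=0$ from entry $m+1$), so with $r:=\epsilon_1-m+1\ge0$ one gets $m+r=\epsilon_1+1\le\epsilon_2+1=n-r$; attainability of each pair \eqref{u5g} then comes from part (b) together with $r=0$ realized by $\mc A$ itself. Note that the paper's own proof of (a) likewise only establishes membership in \eqref{u5g} rather than identifying the exact orbit for each star vector. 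With the false ``same multiset'' claim replaced by this weaker bound, your proof is sound.
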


\begin{lemma}\label{kkk}
 Each pair of $n\times (n-1)$ matrices of the form
\begin{equation}\label{oui}
\left(\begin{bmatrix}
         1&*&&*\\
         0&1&\ddots&\\
         &0&\ddots&*\\
         &&\ddots&1\\0&&&0
         \end{bmatrix},\
\begin{bmatrix}
         *&*&&*\\
         1&*&\ddots&\\
         &1&\ddots&*\\
         &&\ddots&*\\0&&&1
         \end{bmatrix}\right)
\end{equation}
is reduced to $\mc L^T_n$ by simultaneous additions of columns from left to right and simultaneous additions of rows from the bottom to up.
\end{lemma}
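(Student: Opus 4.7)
The two kinds of operations correspond to changes of basis: column additions from left to right amount to right-multiplication of the pair by a matrix in the group $U_{n-1}$ of $(n-1)\times(n-1)$ upper-triangular matrices with $1$'s on the diagonal, and row additions from the bottom to up amount to left-multiplication by a matrix in the analogous group $U_n$. The lemma is therefore equivalent to the existence of $S\in U_n$ and $R\in U_{n-1}$ with $SAR=L_n^T$ and $SBR=R_n^T$. My plan is to establish this by induction on $n$; the base case $n=1$ is vacuous since the matrices are $1\times 0$.

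For the inductive step, I let $A'$ and $B'$ be the $(n-1)\times (n-2)$ matrices obtained from $A$ and $B$ by deleting the last row and the last column. The shapes of $A$ and $B$ make the last row of $A'$ zero and the last row of $B'$ equal to $(0,\dots,0,1)$, so $(A',B')$ again satisfies the hypothesis of the lemma, in size $(n-1)\times(n-2)$. By the inductive hypothesis I obtain $S'\in U_{n-1}$ and $R'\in U_{n-2}$ with $S'A'R'=L_{n-1}^T$ and $S'B'R'=R_{n-1}^T$. I will then extend them to
\[
S=\mat{S'&\sigma\\0&1}\in U_n,\qquad R=\mat{R'&\rho\\0&1}\in U_{n-1},
\]
where $\sigma\in\mathbb C^{n-1}$ and $\rho\in\mathbb C^{n-2}$ must be chosen so that $SAR=L_n^T$ and $SBR=R_n^T$.

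Writing $A=\mat{A'&a\\0&0}$, $B=\mat{B'&b\\0&1}$, $L_n^T=\mat{L_{n-1}^T&e_{n-1}\\0&0}$, and $R_n^T=\mat{R_{n-1}^T&0\\0&1}$ (where $a$ is the top $n-1$ entries of the last column of $A$, with $a_{n-1}=A_{n-1,n-1}=1$, and $b$ is the corresponding slice of $B$), a direct block multiplication reduces the two conditions to
\[
S'A'\rho+S'a=e_{n-1},\qquad \sigma=-(S'B'\rho+S'b).
\]
The hard part will be verifying consistency of the first equation: using $S'A'=L_{n-1}^T(R')^{-1}$, whose last row vanishes, the $(n-1)$-st component reduces to $(S'a)_{n-1}=1$, which holds because $S'\in U_{n-1}$ satisfies $s'_{n-1,n-1}=1$ and because $a_{n-1}=1$ is precisely the diagonal entry $A_{n-1,n-1}$. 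The remaining $n-2$ components then uniquely determine $\rho$, after which the second equation determines $\sigma$, completing the induction.
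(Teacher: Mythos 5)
Your proof is correct and follows essentially the same route as the paper: induction on $n$ by deleting the last row and last column, reducing the resulting subpair to $\mc L_{n-1}^T$ by the inductive hypothesis, and then clearing the last columns. You have merely recast the paper's operational description (first column additions into the last column to clear the first matrix, then additions of the last row to clear the second) as solving the block equations for $\rho$ and then $\sigma$, using $S'A'=L_{n-1}^T(R')^{-1}=\matt{(R')^{-1}\\0}$ to see both consistency and uniqueness.
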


\begin{proof}
Consider the subpair $\cal P$ of \eqref{oui} obtained by removing the last row and last column in the matrices of the pair \eqref{oui}. Reasoning by induction on $n$, we suppose that the subpair $\cal P$ is reduced to $\mc L^T_{n-1}$ by simultaneous additions of columns of its matrices from left to right and simultaneous additions of rows from the bottom to up. We obtain \eqref{oui} in which all stars are zero except for some stars of the last columns.
We make zero the stars of the last column in the first matrix by adding the other columns simultaneously in both matrices; then we make zero the stars of the last column in the second matrix by adding the last row.
\end{proof}

\begin{proof}[Proof of Theorem {\rm\ref{case1}}]
(a) \
By Theorem \ref{first}, there is a neighborhood of \eqref{(1)}, in which all pairs are equivalent to pairs of the form
\begin{equation}\label{(2)}
(C,D):=\left(
\left[\begin{array}{c|c}
  \\[-7pt]L^T_m & \quad 0\quad \\[7pt]
  \hline
  0 & \rule{0pt}{13pt}L^T_n
\end{array}\right],
\left[\begin{array}{c|c}
   R^T_m & \begin{matrix}
  \alpha _1\,\dots\, \alpha _{n-1}\\0
  \end{matrix} \\\hline
  0 & \rule{0pt}{13pt}R^T_n
\end{array}\right]
\right),\quad\text{all }\alpha _i\in\mathbb C,
\end{equation}
in which the last $m$ entries in the sequence $\alpha _1,\dots, \alpha _{n-1}$ are zero.
It is sufficient to prove that $(C,D)$ is equivalent to a pair of the form \eqref{u5g}.

We can suppose that not all $\alpha _1,\dots,\alpha _{n-1}$ are zero (otherwise,
$(C,D)$ is the pair \eqref{(1)}).
Let $\alpha_s$ be the first nonzero entry. Then
\begin{equation}\label{nos}
1\le s<n-m \quad\text{if }m\ne n.
\end{equation}
Let us reduce $(C,D)$ by simultaneous elementary transformations to the form \eqref{u5g}. We usually specify only  transformations with one of the matrices $C$ and $D$ which means that we make the same transformations with the other matrix.
We divide the first horizontal strips of $C$ and $D$ by $\alpha_s$, then multiply the  first vertical strips by $\alpha_s$, and obtain\\
$(C,D)=\left(\left[
\begin{MAT}(@){c3c}
C_{11}&C_{12}\\3
C_{21}&C_{22}\\
 \end{MAT}\right],\,
\left[
\begin{MAT}(@){c3c}
D_{11}&D_{12}\\3
D_{21}&D_{22}\\
 \end{MAT}\right]
\right)$\\[-1mm]
\begin{equation}\label{uuu}
=\left(\
\begin{MAT}(@)[0.9pt]{ccccccccccccc}
\mathit{\scriptstyle 1} &\phantom{0_0}&
\mathit{\scriptstyle\hspace{-7pt} m-1} &\mathit{\scriptstyle 1}&&&{\scriptstyle\!s\!} &&&
 \mathit{\scriptstyle\!\!\!\!\!\!s+m-1\!\!\!\!\!\!}
  &&&\\
 \,1&& &&&& &&& &&&\\
 \,0&\ddotc& &&&& &&& &&&\\
 &\ddotc&1\! &&&& &&& &&&\\
 \phantom{0_0}&&0\! &&&& &&& &&&\\
 && &1&&& &&& &&&\\
 && &0&\ddotc&& &&& &&&\\
 && &&\ddotc&1& &&& &&&\\
 &\phantom{0_0}& &&&0&1 &&& &&&\\
 \,\ze&& &&&&0 &\,1&& &&&\\
 0&\ddotc& &&&& &\,0&\ddotc& &&&\\
 &\ddotc&\ze &&&& &&\ddotc&1\!\! &&&\\
 &&0 &&&& &\phantom{0_0}&&0\!\! &\,1&&\\
 && &&&& &&& &0\,&\ddotc&\\
 && &&&& &&& &&\ddotc&1\\
 &\phantom{0_0}& &&&& &&& &&&0
\addpath{(3,11,3)rrrrrrrrrrr}
\addpath{(3,11,4)lll}
 \addpath{(0,11,4)rrruuuullldddd}
 \addpath{(0,3,4)rrruuuulll}
 \addpath{(7,3,4)rrruuuullldddd}
 \addpath{(0,0,4)rrrrrrrrrrrrr%
 uuuuuuuuuuuuuuulllllllllllll}
 \addpath{(0,0,4)uuuuuuuuuuuuuuu}
 \addpath{(7,0,.)uuuuuuuuuuuuuuu}
\addpath{(10,0,.)uuuuuuuuuuuuuuu}
\addpath{(3,3,.)rrrr}\addpath{(10,3,.)rrrr}
\addpath{(3,7,.)rrrr}\addpath{(10,7,.)rrrr}
\addpath{(3,0,3)uuu}\addpath{(3,7,3)uuuu}
\addpath{(3,15,3)u} \\ \end{MAT}\ ,\
 \begin{MAT}(@)[0.9pt]{cccccccccccccl}
\mathit{\scriptstyle 1} &\phantom{0_0}&\mathit{\scriptstyle\hspace{-7pt} m-1} &\mathit{\scriptstyle 1}&&&{\scriptstyle\!s\!} &&&
 \mathit{\scriptstyle\!\!\!\!\!\!s+m-1\!\!\!\!\!\!}
  &&&&\\
 \,0&& &\,0&...&0& 1&*&\!...\!\!& *\!&*&\!...\!&* &
 \mathit{\,\scriptstyle 1}\\
 \,1&\ddotc& &&&& &&& &&& &\\
 &\ddotc&0\! &&&& &&& &&& &\\
 &\phantom{0_0}&1\! &&&& &&& &&& &
 {\,\scriptstyle m}
 \\
 && &0&&& &&& &&& &\mathit{\,\scriptstyle 1}\\
 && &1&\ddotc&& &&& &&& &\\
 && &&\ddotc&0& &&& &&& &\\
 &\phantom{0_0}& &&&1&0 &&& &&& &
{\,\scriptstyle s}\\
 0&& &&&&1 &\,0&& &&& &
 \mathit{\,\scriptstyle s+1\hspace{-27pt}}\\
 \,\ze&\ddotc& &&&& &\,1&\ddotc& &&& &\\
 &\ddotc&0 &&&& &&\ddotc&0\!\! &&& &\\
 &&\ze &&&& &&\phantom{0_0}&1\!\! &\,0&& &
 {\,\scriptstyle s+m\hspace{-27pt}}\\
 && &&&& &&& &1&\ddotc& &\\
 && &&&& &&& &&\ddotc&0 &\\
 &\phantom{0_0}& &&&& &&& &&&1 &
\addpath{(3,11,3)rrrrrrrrrrr}
\addpath{(3,11,4)lll}
 \addpath{(0,11,4)rrruuuullldddd}
 \addpath{(0,3,4)rrruuuulll}
 \addpath{(7,3,4)rrruuuullldddd}
 \addpath{(0,0,4)rrrrrrrrrrrrr%
 uuuuuuuuuuuuuuulllllllllllll}
 \addpath{(0,0,4)uuuuuuuuuuuuuuu}
 \addpath{(7,0,.)uuuuuuuuuuuuuuu}
\addpath{(10,0,.)uuuuuuuuuuuuuuu}
\addpath{(3,3,.)rrrr}\addpath{(10,3,.)rrr}
\addpath{(3,7,.)rrrr}\addpath{(10,7,.)rrr}
\addpath{(3,0,3)uuu}\addpath{(3,7,3)uuuu}
\addpath{(3,15,3)u}
      \\
 \end{MAT}\hspace{13pt}
\right)
\end{equation}
with $\alpha_s=1$. We reduce $(C,D)$ by the following simultaneous elementary transformations in order to make zero the entry ``1'' under $\alpha_s$ (the zero entries in \eqref{uuu} that first are transformed to $-1$ and then are restored to $0$ are denoted by $\ze$):
\begin{itemize}
  \item The strip $[D_{11}\; D_{12}]$ is subtracted from the substrip formed by rows $s+1, s+2,\dots, s+m$ in the strip $[D_{21}\; D_{22}]$.  Thus, the block $(1,1)$ is subtracted from the rectangle in the block $(2,1)$ (see \eqref{uuu}).

\item  Then the substrip formed by columns $s+1,\dots,s+m-1$ in $\matt{D_{12}\\D_{22}}$ is added to $\matt{D_{11}\\D_{21}}$.  Thus, the rectangle in the block $(2,2)$ is added to the rectangle in the block $(2,1)$ restoring it.

\end{itemize}
We obtain
\begin{equation}\label{gri}
(C,D)= \left(
 \left[\begin{MAT}(b){ccc3cccc2ccc}
 1&&& &&& &&&\\
 0&\ddotc&& &&& &&&\\
 &\ddotc&1& &&& &&&\\
 &&0& &&& &&&\\3
 &&& 1&&& &&&\\
 &&& &\ddotc&& &&&\\
 &&& &&1& &&&\\
 &&& &&&1 &&&\\2
 &&& &&& &1&&\\
 &&& &&& &0&\ddotc&\\
 &&& &&& &&\ddotc&1\\
 &&& &&& &&&0\\
 \end{MAT}
 \right],\
 \left[\begin{MAT}(b){ccc3cccc2ccc}
 *&\!...\!\!&*& 0&\!...\!\!&0&1 &*&\!...\!\!&*\\
 1&&0& &&& &&&\\
 &\ddotc&& &&& &&&\\
 &&1& &&& &&&\\3
 &&& 0&&& &&&\\
 &&& 1&\ddotc&& &&&\\
 &&& &\ddotc&0& &&&\\
 &&& &&1&0 &&&\\2
 *&\!...\!\!&*& &&& &*&\!...\!\!&*\\
 &&& &&& &1&&0\\
 &&& &&& &&\ddotc&\\
 &&& &&& &&&1\\
 \end{MAT}
 \right]\right),
\end{equation}
in which the stars denote complex numbers.
Interchange
the first and second vertical strips, then the first and second horizontal strips,
and obtain
\begin{align}\nonumber
(C,D)&=\left(\left[
\begin{MAT}(@){c3c3c}
C_{11}&C_{12}&C_{13}\\3
C_{21}&C_{22}&C_{23}\\3
C_{31}&C_{32}&C_{33}\\
 \end{MAT}\right],\,
\left[
\begin{MAT}(@){c3c3c}
D_{11}&D_{12}&D_{13}\\3
D_{21}&D_{22}&D_{23}\\3
D_{31}&D_{32}&D_{33}\\
 \end{MAT}\right]
\right)
          \\ \label{(3)}
&= \left(
 \left[\begin{MAT}(@)[1.5pt]{cccc3cccc2ccc}
 1&&&& &&& &&&\\
 0&\ddotc&&& &&& &&&\\
 &\ddotc&1&& &&& &&&\\
 &&0&1& &&& &&&\\3
 &&&0& 1&&& &&&\\
 &&&& 0&1&& &&&\\
 &&&& &0&\ddotc& &&&\\
 &&&& &&\ddotc&1 &&&\\
 &&&& &&&0 &&&\\2
 &&&& 0&*&\!...\!\!&* &1&&\\
 &&&& &0&\ddotc&\vdotc &0&\ddotc&\\
 &&&& &&\ddotc&* &&\ddotc&1\\
 &&&& &&&0 &&&0\\
 \end{MAT}
 \right],\
 \left[\begin{MAT}(b){cccc3cccc2ccc}
 0&&&& &&& &&&\\
 1&\ddotc&&& &&& &&&\\
 &\ddotc&0&& &&& &&&\\
 &&1&0& &&& &&&\\3
 &&&1& *&*&\!...\!\!&* &*&\!...\!\!&*\\
 &&&& 1&&&0 &&&\\
 &&&& &1&& &&&\\
 &&&& &&\ddotc& &&&\\
 &&&& &&&1 &&&\\2    
 &&&& *&*&\!...\!\!&* &*&\!...\!\!&*\\
 &&&& &*&\ddotc&\vdotc &1&&0\\
 &&&& &&\ddotc&* &&\ddotc&\\
 &&&& &&&* &&&1\\
 \end{MAT}
 \right]\right),
\end{align}
in which we replace by stars some zero entries of blocks $C_{32}$ and $D_{32}$.

Using transformations from Lemma \ref{kkk}, we make zero all stars in $D_{33}$; the forms of the other blocks do not change. Make zero row 1 of $D_{32}$ by adding  rows $2, 3,\dots$ of horizontal strip 2 to row 1 of strip 3 simultaneously in $C$ and $D$. Make zero row 1 of $C_{32}$ by adding column 1 of vertical strip 3 simultaneously in $C$ and $D$. Then, adding  rows $3, 4,\dots$ of  strip 2 to the row 2 of strip 3, we make zero row 2 of $D_{32}$. Adding column 2 of vertical strip 3 we make zero row 2 of $C_{32}$, and so on until we obtain \eqref{(3)} in which all stars in  horizontal strips 3 of $C$ and $D$ are zero.

Using Lemma \ref{kkk}, we make zero all stars in $D_{22}$.
Multiplying horizontal strips 2 in $C$ and $D$ by an arbitrarily small number and then dividing vertical strips 2 by the same number, we make the entries of $D_{23}$ arbitrarily small; these transformations do not change the other blocks. We obtain the pair that is equivalent to the initial perturbed pair \eqref{(2)}
and that is obtained from
$\mc L^T_{m+s}\oplus \mc L^T_{n-s}$ by an arbitrarily small perturbation, in which $s$ as in \eqref{uuu} and satisfies \eqref{nos}.
We interchange $\mc L^T_{m+s}$ and $\mc L^T_{n-s}$ if $m+s>n-s$, and reduce the obtained pair by equivalence transformations to its miniversal form
\begin{equation}\label{(2a)}
\left(
\left[\begin{array}{c|c}
  \\[-7pt]L^T_{m'} & \quad 0\quad \\[7pt]
  \hline
  0 & \rule{0pt}{13pt}L^T_{n'}
\end{array}\right],
\left[\begin{array}{c|c}
   R^T_{m'} & \begin{matrix}
  *\dots *\\0
  \end{matrix} \\\hline
  0 & \rule{0pt}{13pt}R^T_{n'}
\end{array}\right]
\right),
\end{equation}
in which the stars are sufficiently small complex numbers. By \eqref{nos},
\[
m<m':=\min(m+s,n-s)\le n':=\max(m+s,n-s).
\]
We repeat this procedure until we obtain a pair
\begin{equation}\label{(2ab)}
\left(
\left[\begin{array}{c|c}
  \\[-7pt]L^T_{m^{(l)}} & \quad 0\quad \\[7pt]
  \hline
  0 & \rule{0pt}{13pt}L^T_{n^{(l)}}
\end{array}\right],
\left[\begin{array}{c|c}
   R^T_{m^{(l)}} & \begin{matrix}
  *\dots *\\0
  \end{matrix} \\\hline
  0 & \rule{0pt}{13pt}R^T_{n^{(l)}}
\end{array}\right]
\right)
\end{equation}
in which all stars are zero, and $m<m^{(l)}\le n^{(l)}$. Thus, \eqref{(2ab)} is of the form \eqref{u5g} with $r>0$.
\medskip

(b)
Let $\mc L^T_{m+r} \oplus
\mc L^T_{n-r}$ be the pair \eqref{u5g} with $r>0$; we must prove that it is equivalent to \eqref{vbf}. We divide the first horizontal strips of \eqref{vbf} by $\varepsilon $, then multiply the  first vertical strips by $\varepsilon $, and obtain the pair \eqref{uuu} in which all stairs are zero.
The obtained pair is reduced as above to \eqref{gri} in which all stairs are zero. This pair is  permutation equivalent to $\mc L^T_{m+r} \oplus
\mc L^T_{n-r}$.
\end{proof}

\subsection{Perturbations of ${\mc L_n \oplus
\mc L_m}$}

\begin{theorem} \label{case1a}
{\rm(a)}
The set of Kronecker
canonical forms of all pairs in a sufficiently small neighborhood of
\begin{equation}\label{(1a)}
\mc L_m \oplus
\mc L_n,\qquad m\le n
\end{equation}
consists of the pairs
\begin{equation}\label{u5ga}
\mc L_{m+r} \oplus
\mc L_{n-r},\qquad m+r\le n-r,\  r\ge 0.
\end{equation}

{\rm(b)} Each pair \eqref{u5ga} with $r>0$ is equivalent to a pair of the form
\begin{equation*}\label{vbfa}
\left(
\mat{L_m&0\\0&L_n},
\mat{R_m&0
\\\D_r(\varepsilon )^T&R_n}
\right)
\end{equation*}
(which is obtained by an arbitrarily small perturbation of \eqref{(1a)}),
in which $\D_r(\varepsilon )$ is defined in \eqref{rdpd} and $\varepsilon$ is an arbitrary nonzero complex number.
\end{theorem}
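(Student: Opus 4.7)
The plan is to derive Theorem \ref{case1a} from Theorem \ref{case1} by a direct transposition argument, avoiding any repetition of the elementary-transformation bookkeeping carried out in the previous subsection. Define the transpose of a matrix pair by $(A,A')^{\mathsf t}:=(A^T,(A')^T)$. This is a continuous involution from matrix pairs of shape $m\times n$ to matrix pairs of shape $n\times m$, and it intertwines equivalence: since $(SAR,SA'R)^{\mathsf t}=(R^TA^TS^T,R^T(A')^TS^T)$, we have $\mc A\sim\mc B$ if and only if $\mc A^{\mathsf t}\sim \mc B^{\mathsf t}$. Hence $\mc A\mapsto \mc A^{\mathsf t}$ induces an order-preserving bijection on orbit posets and carries an $r$-neighborhood of $\mc A$ in the Frobenius norm onto an $r$-neighborhood of $\mc A^{\mathsf t}$.

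Next I would check how transposition acts on the indecomposable Kronecker pairs. Directly from the definitions, $\mc L_n^{\mathsf t}=(L_n^T,R_n^T)=\mc L_n^T$ and $(\mc L_n^T)^{\mathsf t}=\mc L_n$. For the regular part, $\mc D_n(\lambda)^{\mathsf t}$ is equivalent to $\mc D_n(\lambda)$: conjugating by the antidiagonal permutation matrix $P=P^{-1}$ fixes $I_n$ and sends $J_n(\lambda)^T$ to $J_n(\lambda)$ (respectively $J_n(0)^T$ to $J_n(0)$ when $\lambda=\infty$). Consequently, transposition turns the Kronecker canonical form of $\mc A$ into the Kronecker canonical form of $\mc A^{\mathsf t}$, with the roles of the summands $\mc L_k$ and $\mc L_k^T$ interchanged while the regular summands are preserved.

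Applying this to the pair of interest gives $(\mc L_m\oplus\mc L_n)^{\mathsf t}=\mc L_m^T\oplus\mc L_n^T$. By Theorem \ref{case1}(a), the Kronecker canonical forms appearing in a sufficiently small neighborhood of $\mc L_m^T\oplus\mc L_n^T$ are exactly the pairs $\mc L_{m+r}^T\oplus\mc L_{n-r}^T$ with $r\ge 0$ and $m+r\le n-r$. Transposing everything back, the Kronecker canonical forms appearing in a sufficiently small neighborhood of $\mc L_m\oplus\mc L_n$ are the pairs $\mc L_{m+r}\oplus\mc L_{n-r}$ with the same constraints, which proves assertion (a). For (b), transpose the explicit perturbation
\[
\left(\mat{L_m^T&0\\0&L_n^T},\mat{R_m^T&\D_r(\varepsilon)\\0&R_n^T}\right)
\]
exhibited in Theorem \ref{case1}(b); this produces exactly
\[
\left(\mat{L_m&0\\0&L_n},\mat{R_m&0\\ \D_r(\varepsilon)^T&R_n}\right),
\]
the pair claimed in the statement.

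There is no serious obstacle in this plan. The only routine points to spell out are that the Frobenius norm is invariant under transposition (so ``arbitrarily small'' is preserved in both directions) and that the identification of $\mc D_n(\lambda)^{\mathsf t}$ with $\mc D_n(\lambda)$ via the antidiagonal permutation is an \emph{equivalence}, not a similarity; once these are verified, the whole result is obtained by literally transposing the proof of Theorem \ref{case1}.
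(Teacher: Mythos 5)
Your argument is exactly the paper's: the authors prove Theorem \ref{case1a} with the single sentence that it ``is obtained from Theorem \ref{case1} by matrix transposition.'' Your write-up simply spells out the routine verifications (transposition intertwines equivalence, preserves the Frobenius norm, swaps $\mc L_k$ with $\mc L_k^T$, and fixes $\mc D_n(\lambda)$ up to equivalence) that the paper leaves implicit.
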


\begin{proof} This theorem is obtained from Theorem \ref{case1} by matrix transposition.
\end{proof}

\subsection{Perturbations of {${\mc L^T_m\oplus
{\cal D}_n(\lambda)}$}}

\begin{theorem}\label{case4}
The set of Kronecker
canonical forms of all pairs obtained by perturbations of the blocks $(1,2)$ in
\begin{equation}\label{bmx}
\mc L^T_m\oplus {\cal D}_n(\lambda )=
\begin{cases}
\left(
\mat{L^T_m&0\\ 0&I_n},\,
\mat{
R^T_m&0\\ 0&J_n(\lambda )
}
 \right)&\text{if }\lambda \in\mathbb C
                        \\[18pt]
\left(
 \mat{
L^T_m&0\\ 0&J_n(0)
},\,
\mat{
R^T_m&0\\ 0&I_n
} \right)&\text{if }\lambda =\infty
\end{cases}
\end{equation}
consists of the pairs
\begin{equation}\label{cxm}
\mc L^T_{m+r} \oplus \mc D_{n-r}(\lambda ),\quad\text{in which } 0\le r\le n.
\end{equation}

{\rm(b)} Each pair \eqref{cxm} with $r>0$ is equivalent to a pair of the form
\begin{equation}\label{bmx1}
\begin{aligned}
&\left(
 \mat{
L^T_m&0\\ 0&I_n},\,
\mat{
R^T_m&\D_{n-r+1}(\varepsilon)
\\ 0&J_n(\lambda )}
 \right)
&&\text{if }\lambda \in\mathbb C
                             \\
&\left(
 \mat{
L^T_m&\nabla_{n-r+1}(\varepsilon)\\ 0&J_n(0)},\,
\mat{
R^T_m&0\\ 0&I_n}
 \right)
&&\text{if }\lambda =\infty
\end{aligned}
\end{equation}
(which is obtained by an arbitrarily small perturbation of \eqref{bmx}),
in which $\D_r(\varepsilon )$ and $\nabla_r(\varepsilon)$ are defined in \eqref{rdpd} and $\varepsilon$ is an arbitrary nonzero complex number.
\end{theorem}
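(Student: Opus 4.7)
I will mimic the strategy of the proof of Theorem \ref{case1}: first reduce the perturbation to a one-parameter normal form by equivalence transformations that preserve the diagonal blocks, and then exhibit an explicit equivalence to the claimed Kronecker canonical form. I describe the case $\lambda\in\mathbb C$; the case $\lambda=\infty$ is entirely parallel after transposing the two matrices of the pair.

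Starting from an arbitrary perturbation of the block $(1,2)$,
\[
\left(\mat{L^T_m & A \\ 0 & I_n},\,\mat{R^T_m & B \\ 0 & J_n(\lambda)}\right),
\]
applying $\mat{I_m & -A \\ 0 & I_n}$ on the left kills $A$ (and replaces $B$ by $B-AJ_n(\lambda)$). Next, for any $(m-1)\times n$ matrix $U$, applying simultaneously $\mat{I_m & -L^T_mU \\ 0 & I_n}$ on the left and $\mat{I_{m-1} & U \\ 0 & I_n}$ on the right preserves the (new) first matrix and shifts $B$ by the image of the linear map $U\mapsto R^T_mU - L^T_mUJ_n(\lambda)$. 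A back-substitution row by row (starting from the last row of $U$) shows this map is injective, hence its image has codimension exactly $n$, and the subspace of $m\times n$ matrices whose only nonzero row is the first is a complement. Thus we may assume $B$ has only first row $\beta=(\beta_1,\dots,\beta_n)$ nonzero. Finally, the transformations $S=\mat{I_m & 0 \\ 0 & R_2^{-1}}$, $R=\mat{I_{m-1} & 0 \\ 0 & R_2}$ with $R_2$ commuting with $J_n(\lambda)$ act on $\beta$ by $\beta\mapsto\beta R_2$; such $R_2$ are precisely the invertible polynomials in $J_n(\lambda)-\lambda I$, i.e.\ units of $\mathbb C[x]/(x^n)$. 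Consequently $\beta$ reduces to a single nonzero entry $\varepsilon$ in column $k$, where $k=\min\{i:\beta_i\ne 0\}$. Writing $r=n-k+1$ gives exactly the perturbation $\D_{n-r+1}(\varepsilon)$ from Part (b); the case $\beta=0$ corresponds to $r=0$, i.e.\ the unperturbed pair.

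It remains to prove that the reduced pair
\[
\left(\mat{L^T_m & 0 \\ 0 & I_n},\,\mat{R^T_m & \varepsilon E_{1,n-r+1} \\ 0 & J_n(\lambda)}\right)
\]
is equivalent to $\mc L^T_{m+r}\oplus\mc D_{n-r}(\lambda)$. Following the pattern of the proof of Theorem \ref{case1}, I would carry this out by explicit row and column operations: after swapping suitable rows and columns so that the $\varepsilon$-perturbation sits at the seam between the two diagonal blocks, one successively absorbs the first $r$ columns of the identity-Jordan block (together with the corresponding rows) into the $L^T_m$ block, extending it to $L^T_{m+r}$, while the remaining bottom-right $(n-r)\times(n-r)$ block stays of the form $(I_{n-r},J_{n-r}(\lambda))$. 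An auxiliary lemma analogous to Lemma \ref{kkk}, stating that a certain almost-canonical mixed block can be cleaned to $L^T$-form by row and column operations of the allowed kind, will be used to tidy up stars produced along the way. The case $\lambda=\infty$ proceeds identically after transposing the two matrices of the pair: the reduction is now of $A$ (rather than $B$) and its last row (rather than the first) becomes the only nonzero row, giving the perturbation $\nabla_{n-r+1}(\varepsilon)$ in the first matrix as in Part (b). The main obstacle is this last step: as in Theorem \ref{case1}, the sequence of explicit row and column operations is long and requires careful staircase-style bookkeeping to verify that all residual entries vanish and the result is exactly the stated direct sum.
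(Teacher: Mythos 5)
Your reduction of the perturbed $(1,2)$-block to the one-parameter normal form $\D_{n-r+1}(\varepsilon)$ (resp.\ $\nabla_{n-r+1}(\varepsilon)$) is correct, and it is in fact a more elementary route than the paper's: where the paper first rescales to make the perturbation small and then invokes the miniversal deformation Theorem~\ref{first}, you compute a complement to the tangent directions by hand --- the injectivity of $U\mapsto R^T_mU-L^T_mUJ_n(\lambda)$ and the right action of the commutant of $J_n(\lambda)$ on the residual first row $\beta$ are exactly right --- and this even dispenses with the paper's separate preliminary reduction from $\lambda\in\mathbb C$ to $\lambda=0$.

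The gap is the step you defer to the end: you never actually prove that the reduced pair
\[
\left(\mat{L^T_m&0\\0&I_n},\,\mat{R^T_m&\D_{n-r+1}(\varepsilon)\\0&J_n(\lambda)}\right)
\]
has Kronecker canonical form $\mc L^T_{m+r}\oplus\mc D_{n-r}(\lambda)$, and that is essentially the whole content of both parts of the theorem. The paper carries it out by explicit simultaneous row and column additions that annihilate the single ``$1$'' of $J_n(\lambda)$ sitting directly below $\varepsilon$ --- the $(s-1,s)$ entry with $s=n-r+1$ --- split into the two cases $m<s$ and $m\ge s$, after which the pair is visibly a permutation of $\mc L^T_{m+n-s+1}\oplus\mc D_{s-1}(\lambda)$. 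Moreover, your sketch misidentifies the outcome: tracing the Kronecker chain through the perturbed entry shows that rows and columns $s,\dots,n$ of the $\mc D_n(\lambda)$-summand --- the \emph{last} $r$ of them --- are what get welded onto the $\mc L^T_m$ chain, while the \emph{top-left} $(s-1)\times(s-1)$ corner $\mc D_{s-1}(\lambda)=\mc D_{n-r}(\lambda)$ splits off, not ``the first $r$ columns'' with a ``bottom-right'' remainder as you wrote. So the proposed absorption scheme and the unspecified auxiliary lemma would need to be replaced by an argument along the lines of the paper's explicit two-case computation.
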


\begin{proof}
Let $(A,B)$ be the pair \eqref{bmx} with $\lambda =\infty$.
Since
\begin{equation*}\label{xsm}
(R^T_m,L^T_m)=Z_m(L^T_m,R^T_m)Z_{m-1},\qquad Z_p:=\mat{0&&1\\&
 \udots&\\1&&0}\ (p\text{-by-}p) ,
\end{equation*}
$(B,A)$ is equivalent to the pair \eqref{bmx} with $\lambda =0$. Therefore, it suffices to prove the theorem for $\lambda \in\mathbb C$.

Let $(A,B(\lambda ))$ be the pair \eqref{bmx} with $\lambda \in\mathbb C$. Since $(L^T_m, R^T_m-\lambda L^T_m)$ is equivalent to $(L^T_m, R^T_m)$, the pair $(A,B(\lambda )-\lambda A)$ is equivalent to $(A,B(0))$. Therefore, it
suffices to prove the theorem for $\lambda =0$. In the rest of the proof, we set $\lambda=0$.
\medskip

(a)
Let $(C,D)$ be a pair that is obtained from
\eqref{bmx} with $\lambda=0$ by replacing its blocks $(1,2)$ by arbitrary matrices; we must prove that the Kronecker canonical form of $(C,D)$ is \eqref{cxm} for some $r$.

Multiplying the first horizontal strips of $C$ and $D$ by an arbitrarily small number and then dividing the first vertical strips by the same number, we make the entries of the blocks $(1,2)$ arbitrarily small.
Theorem \ref{first} ensures that
$(C,D)$ is reduced by equivalence transformations to the form
\begin{align}\nonumber
(C,D)&=\left(\left[\begin{array}{c|c}
 C_{11}&C_{12}\\\hline C_{21}&C_{22}
\end{array}\right],
\left[\begin{array}{c|c}
 D_{11}&D_{12}\\\hline D_{21}&D_{22}
\end{array}\right]\right)\\ \label{v5s}
&=
\left(
 \left[\begin{array}{ccc|cccc}
1&&   &&&&\\
0&\ddots&   &&&&\\
 &\ddots&1   &&&\text{\raisebox{8pt}[0pt][0pt]{$\!\!\!\!\!\!0$}}&\\
 &&0   &&&&\\ \hline
 &&   &1&&&\\
 &&   &&1&&\\
 &\text{\raisebox{7pt}[0pt][0pt]{$0$}}&   &&&\ddots&\\
 &&   &&&&1\\
 \end{array}
 \right],\,
 \left[\begin{array}{ccc|cccc}
0&&   &\alpha_1&\alpha_2&...&\alpha_n\\
1&\ddots&   &&&&\\
 &\ddots&0   &&&\!\!\!\!\!\!0&\\
 &&1   &&&&\\ \hline
 &&&   0&1&&\\
 &&   &&0&\ddots&\\
 &\text{\raisebox{7pt}[0pt][0pt]{$0$}}&   &&&\ddots&1\\
 &&   &&&&0\\
 \end{array}
 \right]
 \right),
\end{align}
in which $\alpha_1,\dots,\alpha_n$
are arbitrarily small.

Each matrix that commutes with $J_n(0)$ has the form
\begin{equation*}\label{jc4}
K:=\begin{bmatrix}
\kappa_1&\kappa_2&\ddots&\kappa_n
\\&\kappa_1&\ddots&\ddots\\
&&\ddots&\kappa_2
\\0&&&\kappa_1
\end{bmatrix},\qquad \kappa_1,\dots,\kappa_n\in\mathbb C.
\end{equation*}
The equivalence transformation
\[
(I_m\oplus K^{-1})(C,D)(I_{m-1}\oplus K),\qquad c_1\ne 0
\]
replaces $(\alpha_1,\dots,\alpha_n)$ by
\begin{equation}\label{ff0}
(\alpha_1,\dots,\alpha_n) K=(\alpha_1\kappa_1,\,
\alpha_1\kappa_2+\alpha_2\kappa_1,\,\dots,\,
\alpha_1\kappa_n+\dots+
\alpha_n\kappa_1)
\end{equation}
and does not change the other entries of $C$ and $D$. Let $\alpha_s$ be the first nonzero entry in $(\alpha_1,\dots,\alpha_n)$.
Using transformations \eqref{ff0}, we make $(\alpha_1,\dots,\alpha_n)=
(0,\dots,0,1,0,\dots,0)$ with ``1'' at the position $s$.

Let first $s\ge 2$.
The ``1'' under $\alpha_s=1$ is the $(s-1,s)$th entry of the block $D_{22}$ (see \eqref{v5s}).
We make zero this entry of $D_{22}$ by the following elementary transformations:

\begin{itemize}
  \item \emph{Case 1: $m<s$.}
We subtract the rows $1,2,\dots,m$ of the first horizontal strip from the rows $s-1,s-2,\dots,s-m$
of the second horizontal strip, respectively, in $C$ and $D$.
Then we add the columns $s-1,s-2,\dots,s-m+1$ of the second vertical strip to the columns $1,2, \dots,m-1$ of the first vertical strip in $C$ and $D$. For example, if $m=3$, $n=6$, and $s=5$, then
\[
(C,D)= \left(\,\begin{MAT}(e)[1.5pt]{cccccccc}
1&0&&&&&&\\
0&1&&&&&&\\
0&0&&&&&&\\3
&&1&&&&&\\
0&0&&1&0&0&&\\
0&\ze&&&1&0&&\\
\ze&0&&&0&1&&\\
&&&&&&1&\\
&&&&&&&1
\addpath{(0,0,4)uuuuuuuuurrrrrrrrdddddddddlllllllll}
\addpath{(0,2,4)rruuullddd}
\addpath{(0,6,4)rruuullddd}
\addpath{(4,2,4)rruuullddd}
\addpath{(4,0,.)uull}
\addpath{(6,0,.)uurr}
\addpath{(2,5,.)rruuuu}
\addpath{(8,5,.)lluuuu}
\addpath{(2,0,3)uu}\addpath{(2,5,3)u}
\\
 \end{MAT}\,,\,
\begin{MAT}(e)[1.5pt]{cccccccc}
0&0&&&&&1&\\
1&0&&&&&&\\
0&1&&&&&&\\3
&&0&1&&&&\\
0&\ze&&0&1&0&&\\
\ze&0&&&0&1&&\\
0&0&&&0&0&1&\\
&&&&&&0&1\\
&&&&&&&0
\addpath{(0,0,4)uuuuuuuuurrrrrrrrdddddddddlllllllll}
\addpath{(0,2,4)rruuullddd}
\addpath{(0,6,4)rruuullddd}
\addpath{(4,2,4)rruuullddd}
\addpath{(4,0,.)uull}
\addpath{(6,0,.)uurr}
\addpath{(2,5,.)rruuuu}
\addpath{(8,5,.)lluuuu}
\addpath{(2,0,3)uu}\addpath{(2,5,3)u}
\\
 \end{MAT}\,
\right);
\]
the zero entries that are transformed to $-1$ and then are restored to $0$ are denoted by $\ze$.

  \item \emph{Case 2:
$m\ge s$.}
We subtract the rows $1,2,\dots,s-1$ of the first horizontal strip from the rows $s-1,s-2,\dots,1$
of the second horizontal strip, respectively, in $C$ and $D$.
Then we add the columns $s-1,s-2,\dots,1$ of the second vertical strip to the columns $1,2,\dots,s-1$ of the first vertical strip in $C$ and $D$. For example,
if $m=5$, $n=4$, and $s=3$, then
\[
(C,D)= \left(\,\begin{MAT}(e)[1.5pt]{cccc3cccc}
1&0&&&&&&\\
0&1&&&&&&\\
&0&1&&&&&\\
&&0&1&&&&\\
&&&0&&&&\\3
0&\ze&&&1&0&&\\
\ze&0&&&0&1&&\\
&&&&&&1&\\
&&&&&&&1
\addpath{(0,0,4)uuuuuuuuurrrrrrrrdddddddddlllllllll}
\addpath{(0,2,4)rruulldd}
\addpath{(0,7,4)rruulldd}
\addpath{(4,2,4)rruulldd}
\addpath{(2,0,.)uurr}
\addpath{(6,0,.)uurr}
\addpath{(2,4,.)uuu}
\addpath{(6,4,.)uuuuu}
\addpath{(2,7,.)rrrrrr}
\\
 \end{MAT}\,,\,
\begin{MAT}(e)[1.5pt]{cccc3cccc}
0&0&&&&&1&\\
1&0&&&&&&\\
&1&0&&&&&\\
&&1&0&&&&\\
&&&1&&&&\\3
\ze&0&&&0&1&&\\
0&0&&&0&0&1&\\
&&&&&&0&1\\
&&&&&&&0
\addpath{(0,0,4)uuuuuuuuurrrrrrrrdddddddddlllllllll}
\addpath{(0,2,4)rruulldd}
\addpath{(0,7,4)rruulldd}
\addpath{(4,2,4)rruulldd}
\addpath{(2,0,.)uurr}
\addpath{(6,0,.)uurr}
\addpath{(2,4,.)uuu}
\addpath{(6,4,.)uuuuu}
\addpath{(2,7,.)rrrrrr}
\\
 \end{MAT}\,
\right)\,.
\]
\end{itemize}

Let now $s=1$. The pair $(C,D)$ is permutation equivalent to
$(L^T_{m+n},R^T_{m+n})$, which is a pair of the form \eqref{cxm}.

Therefore,
for each $s$ the pair
$(C,D)$ is reduced to the pair that is obtained from \eqref{v5s} by replacing $(\alpha_1,\dots,\alpha_n)$ by
$(0,\dots,0,1,0,\dots,0)$ and the entry ``1'' under $\alpha _s$ by 0.
This pair is permutation equivalent to
$(L^T_{m+n-s+1}, R^T_{m+n-s+1}) \oplus
( I_{s-1}, J_{s-1}(0 ))$, which is a pair of the form \eqref{cxm}.
\medskip

(b)
Let $(E,F):=\mc L^T_{m+r} \oplus \mc D_{n-r}(0)$ with $0< r\le n$
be the pair \eqref{cxm} with $\lambda =0$; we must prove that it is equivalent to
\eqref{bmx1}. The pair \eqref{bmx1}  with $\lambda =0$ is the pair \eqref{v5s} in which
$(\alpha_1,\dots,\alpha_n)
=(0,\dots,0,\varepsilon ,0,\dots,0)$ with $\varepsilon\ne 0$ at the place $s:= n-r+1$. Reasoning as in part (a), we reduce it to a pair that is permutation equivalent to $(E,F)$.
\end{proof}

\subsection{Perturbations of
{${\mc L_m\oplus {\cal D}_n(\lambda)}$
}}

\begin{theorem}\label{case4a}
{\rm(a)}
The set of Kronecker
canonical forms of all pairs obtained by perturbations of the blocks $(2,1)$ in
\begin{equation}\label{bmxa}
\mc L_m\oplus {\cal D}_n(\lambda )
\end{equation}
consists of the pairs
\begin{equation}\label{cxma}
\mc L_{m+r} \oplus \mc D_{n-r}(\lambda ),\quad\text{in which } 0\le r\le n.
\end{equation}

{\rm(b)} Each pair \eqref{cxma} with $r>0$ is equivalent to a pair of the form
\[\begin{aligned}
&\left(
 \mat{
L_m&0\\ 0&I_n},\,
\mat{
R_m&0
\\ \D_{r}(\varepsilon)^T&J_n(\lambda )}
 \right)
&&\text{if }\lambda \in\mathbb C
                             \\
&\left(
 \mat{
L_m&0\\ 0&J_n(0)},\,
\mat{
R_m&0\\ \nabla_{r}(\varepsilon)^T&I_n}
 \right)
&&\text{if }\lambda =\infty
\end{aligned}\]
(which is obtained by an arbitrarily small perturbation of \eqref{bmxa}),
in which $\varepsilon$ is an arbitrary nonzero complex number.
\end{theorem}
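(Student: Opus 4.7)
The plan is to derive Theorem \ref{case4a} from Theorem \ref{case4} by matrix transposition, mirroring the one-line derivation of Theorem \ref{case1a} from Theorem \ref{case1}. The key observation is that the map $(A,B) \mapsto (A^T, B^T)$ is a well-defined involution on equivalence classes of matrix pairs: it sends $\mc L_n^T$ to $\mc L_n$ (and conversely), and it preserves the equivalence class of $\mc D_n(\lambda)$, since $J_n(\lambda)^T$ is similar to $J_n(\lambda)$ via the reverse permutation matrix $Z_n$. Transposition is also an isometry in the Frobenius norm, so it carries arbitrarily small perturbations to arbitrarily small perturbations; and it swaps the roles of upper-right and lower-left blocks.

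Applying transposition to the pair $\mc L_m^T \oplus \mc D_n(\lambda)$ from Theorem \ref{case4} yields a pair equivalent to $\mc L_m \oplus \mc D_n(\lambda)$, and perturbations of the $(1,2)$ block of the former correspond to perturbations of the $(2,1)$ block of the latter. Thus part (a) follows immediately: the Kronecker canonical forms $\mc L_{m+r}^T \oplus \mc D_{n-r}(\lambda)$, $0 \le r \le n$, from Theorem \ref{case4} transpose to $\mc L_{m+r} \oplus \mc D_{n-r}(\lambda)$, which is \eqref{cxma}.

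For part (b), I would transpose the explicit perturbation \eqref{bmx1} of Theorem \ref{case4}. In the case $\lambda \in \mathbb C$, this produces a pair with $J_n(\lambda)^T$ in the lower-right diagonal block and $\Delta_{n-r+1}(\varepsilon)^T$ in the $(2,1)$ block. Conjugating the lower $n \times n$ rows and columns by $Z_n$ (which is an equivalence transformation) restores $J_n(\lambda)$, since $Z_n J_n(\lambda)^T Z_n = J_n(\lambda)$, and simultaneously reverses the rows of the $(2,1)$ block, moving the nonzero entry $\varepsilon$ from row $n-r+1$ to row $n-(n-r+1)+1 = r$. This yields $\Delta_r(\varepsilon)^T$ in the $(2,1)$ block, as the theorem asserts. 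The $\lambda = \infty$ case is analogous, with the roles of the two matrices of the pair swapped. The only step requiring verification is this $Z_n$-conjugation, which is a routine computation; no substantive obstacle arises beyond what was already settled in Theorem \ref{case4}.
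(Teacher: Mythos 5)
Your proposal takes the same route as the paper: the paper's entire proof is the one line defining the map $A\mapsto\matt{I_{m-1}&0\\0&Z_n}A^T\matt{I_m&0\\0&Z_n}$, which is exactly the composite transposition-and-$Z_n$-conjugation that you describe more explicitly. For $\lambda\in\mathbb C$ your computation is right: $Z_n J_n(\lambda)^T Z_n = J_n(\lambda)$ and $Z_n\Delta_{n-r+1}(\varepsilon)^T = \Delta_r(\varepsilon)^T$, so the transposed pair lands precisely on the form claimed.

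One caveat you should be aware of concerns the $\lambda=\infty$ case, which you dismiss as ``analogous, with the roles of the two matrices swapped.'' If you carry out the transposition-conjugation on the pair from Theorem \ref{case4} for $\lambda=\infty$, namely $\bigl(\matt{L_m^T&\nabla_{n-r+1}\\0&J_n(0)},\matt{R_m^T&0\\0&I_n}\bigr)$, you obtain
\[
\left(\mat{L_m&0\\ \nabla_r(\varepsilon)^T&J_n(0)},\ \mat{R_m&0\\0&I_n}\right),
\]
with the perturbation sitting in the \emph{first} matrix of the pair. The form stated in Theorem \ref{case4a}(b), by contrast, places $\nabla_r(\varepsilon)^T$ in the \emph{second} matrix; ``swapping the roles of the two matrices'' does not reconcile the two. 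This is not a defect of your argument so much as an internal inconsistency in the source: the version with the perturbation in the first matrix agrees (up to permuting the diagonal blocks) with the form recorded in Theorem III(iv), so the statement in Theorem \ref{case4a}(b) for $\lambda=\infty$ is almost certainly a misprint. You should record the pair that actually results from transposition rather than asserting that it matches the printed form; as written, your last sentence papers over a discrepancy that your own computation, if carried out, would reveal.
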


\begin{proof} The mapping
\[
A\mapsto\mat{I_{m-1}&0\\0&Z_n}A^T
\mat{I_{m}&0\\0&Z_n},\qquad Z_n:=\mat{0&&1\\&
\udots&\\1&&0}\ (n\text{-by-}n)
\]
transforms the matrices from Theorem \ref{case4} to the matrices from Theorem \ref{case4a}.
\end{proof}

\subsection{Perturbations of ${\mc D_m(\lambda)\oplus\mc D_n(\lambda)}$}

\begin{theorem}    \label{kurf}
{\rm(a)}
If a Kronecker pair $\mc K$ is equivalent to a pair in an arbitrarily small neighborhood of
\begin{equation}\label{moeg}
\mc D_m(\lambda)\oplus\mc D_n(\lambda),\qquad m\le n,
\end{equation}
then $\mc K$ has the form
\begin{equation}\label{cfqf}
\mc D_{m-r}(\lambda)
\oplus \mc D_{n+r}(\lambda),\qquad 0\le r\le m.
\end{equation}

{\rm(b)} Each pair \eqref{cfqf} with $r>0$ is equivalent to a pair of the form
\begin{equation*}\label{tgth}
  \begin{aligned}
&\left(I_{m+n},\mat{J_m(\lambda )&\D_r(\varepsilon)^T\\0&J_n(\lambda )}\right)
&&\text{if }\lambda \in\mathbb C
                        \\
&\left(\mat{J_m(0)&\D_r(\varepsilon )^T\\0&J_n(0)},I_{m+n}\right)
&&\text{if }\lambda =\infty
  \end{aligned}
\end{equation*}
(which is obtained by an arbitrarily small perturbation of \eqref{moeg}), in which $\varepsilon$ is an arbitrary nonzero complex number.
\end{theorem}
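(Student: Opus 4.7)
My plan is to reduce the theorem to the case $\lambda=0$ by two standard tricks, pass to similarity of square matrices, and then argue via lower semicontinuity of rank for part~(a) and by direct Jordan-chain analysis for part~(b).

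\emph{Reductions.} The case $\lambda=\infty$ reduces to $\lambda=0$ via the equivalence-preserving swap $(A,B)\mapsto(B,A)$, which sends $\mc D_k(\mu)\mapsto\mc D_k(\mu^{-1})$ with the convention $0^{-1}:=\infty$, and fixes $\mc L_k$ and $\mc L_k^T$; in particular it takes $\mc D_m(\infty)\oplus\mc D_n(\infty)$ to $\mc D_m(0)\oplus\mc D_n(0)$. For general $\lambda\in\mb C$, the shift $(A,B)\mapsto(A,B-\lambda A)$ is an equivalence-preserving bijection that sends $\mc D_k(\mu)\mapsto\mc D_k(\mu-\lambda)$ for $\mu\in\mb C$ while fixing all singular and $\infty$-summands. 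So it suffices to prove the theorem for $\lambda=0$. In a small enough neighborhood of $\mc A:=(I_{m+n},J)$ with $J:=J_m(0)\oplus J_n(0)$ the first component $A$ is invertible and $(A,B)\sim(I_{m+n},BA^{-1})$, reducing the study to similarity orbits of matrices close to $J$.

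\emph{Part~(a).} Suppose $\ll\mc A\rr\le\ll\mc K\rr$. The rank of the first component is an equivalence invariant, so the presence in $\mc K$ of any singular summand $\mc L_i,\mc L_i^T$ or $\infty$-eigenvalue summand $\mc D_k(\infty)$ would make this rank strictly less than $m+n$, contradicting that some representative of $\ll\mc K\rr$ lies arbitrarily close to $(I_{m+n},J)$. Hence $\mc K=\bigoplus_i\mc D_{k_i}(\mu_i)$ with $\mu_i\in\mb C$, and continuity of eigenvalues applied to representatives $C=BA^{-1}$ close to $J$ forces every $\mu_i=0$; thus $C$ is nilpotent of size $m+n$. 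Lower semicontinuity of the rank of each power yields $\mathrm{rank}\,C^j\ge\mathrm{rank}\,J^j$ for all $j\ge 1$, which is equivalent to the Jordan partition of $C$ dominating $(n,m)$ in the partition dominance order. The only partitions of $m+n$ dominating $(n,m)$ are $(n+r,m-r)$ for $0\le r\le m$, giving \eqref{cfqf}.

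\emph{Part~(b).} For each $r$ with $0<r\le m$, I verify directly that $M_\varepsilon:=J+\varepsilon E$, where $E$ has its sole nonzero entry equal to $1$ at position $(r,m+1)$ so that $(I_{m+n},M_\varepsilon)$ matches the stated perturbation, has Jordan form $J_{m-r}(0)\oplus J_{n+r}(0)$. Labeling the standard basis of $\mb C^{m+n}$ as $f_1,\dots,f_m,g_1,\dots,g_n$, the action of $M_\varepsilon$ is $f_i\mapsto f_{i-1}$, $g_j\mapsto g_{j-1}$ (with $f_0=g_0:=0$), augmented by the branching $g_1\mapsto\varepsilon f_r$. Then $g_n$ generates a Jordan chain of length $n+r$ through $g_n\to g_{n-1}\to\cdots\to g_1\to\varepsilon f_r\to\cdots\to\varepsilon f_1\to 0$, while the vectors $h_k:=f_{r+k}-\varepsilon^{-1}g_k$ for $k=1,\dots,m-r$ span a complementary $M_\varepsilon$-invariant subspace on which $M_\varepsilon$ acts as $h_k\mapsto h_{k-1}$, giving a chain of length $m-r$. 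Hence the Kronecker form of $(I_{m+n},M_\varepsilon)$ is $\mc D_{m-r}(0)\oplus\mc D_{n+r}(0)$; the $\lambda=\infty$ form in the statement follows by applying the swap from the reductions above. The main obstacle is the dominance-order step in part~(a); an alternative is to invoke Arnold's miniversal form (Theorem~\ref{teo2}) for $J$ and compute directly on the resulting family, but the rank-semicontinuity route is more efficient.
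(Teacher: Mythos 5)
Your proof is correct, and its overall strategy matches the paper's: swap $(A,B)\mapsto(B,A)$ to handle $\lambda=\infty$, invert the first component to pass from pair equivalence to matrix similarity, and study closures of nilpotent similarity orbits (you add the shift $B\mapsto B-\lambda A$ to normalize to $\lambda=0$; the paper performs that normalization only inside Lemma \ref{caose4}). The technical core, however, differs in both parts. For (a), you invoke lower semicontinuity of rank applied to each power $C^j$, together with the standard dominance-order characterization of closures of nilpotent similarity orbits; the paper instead proves the needed implication from scratch in Lemma \ref{caose4} via Weyr canonical forms and a Gram--Schmidt compactness argument, and then routes through the immediate-successor description of Theorem \ref{jut}(b) to obtain Theorem \ref{kur}(a). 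Your route is shorter if one accepts the dominance fact as known, while the paper's is self-contained. For (b), you determine the Jordan structure of $J+\varepsilon E$ directly by exhibiting two Jordan chains (the chain $g_n\to\cdots\to g_1\to\varepsilon f_r\to\cdots\to\varepsilon f_1$ of length $n+r$, and $h_k:=f_{r+k}-\varepsilon^{-1}g_k$ for $k=1,\dots,m-r$ giving a complementary invariant chain of length $m-r$), whereas the paper reduces the perturbed block matrix to block-diagonal form by an explicit sequence of simultaneous row and column operations. Both computations are valid; your chain argument is arguably cleaner for this particular case, while the paper's elementary-transformation technique is the one it reuses uniformly throughout Section \ref{pairs}.
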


\begin{proof}
This theorem follows from Theorem \ref{kur} by the reasons that are given at the beginning of Section \ref{jord}.
\end{proof}

\subsection{Perturbations of ${\mc L^T_m \oplus \mc L_n}$}

\begin{theorem}\label{case2}
{\rm(a)}
The set of Kronecker
canonical forms of all pairs in a sufficiently small neighborhood of
\begin{equation}\label{(a1)}
\mc L^T_m \oplus \mc L_n
\end{equation}
consists of the pairs
\eqref{(a1)} and
\begin{equation}\label{a1def}
{\cal D}_{r_1} (\lambda_1)\oplus\dots\oplus {\cal D}_{r_t}(\lambda_t),\qquad r_1+\dots +r_t=m+n-1,
\end{equation}
with distinct eigenvalues $\lambda_1,\dots,\lambda_t\in{\mathbb C}\cup\infty$.

{\rm(b)}
Each pair \eqref{a1def} with distinct eigenvalues $\lambda_1,\dots,\lambda_t\in{\mathbb C}\cup\infty$
is equivalent  to a pair of the form
\begin{equation}\label{min.case2}
\arraycolsep=0.24em
\left(
 \left[\begin{array}{ccc|cccc}
1&&&&&&\alpha _1\\
0&\ddots&&&&&\alpha _2 \\[-3pt]
&\ddots&1&&
\text{\raisebox{7pt}[0pt][0pt]{$0$}}
&&\vdots\\
&&0&&&&\alpha _m\\\hline
&&&1&\,0\!\!\!\\
&0&&&\ddots&\ddots\\
&&&&&1\!\!&\,0\!\!
 \end{array}
              \right],\,
\left[\begin{array}{ccc|cccc}
0&&&\beta_1&\beta_2&\dots &\beta _n\\
1&\ddots&&&&&\\[-3pt]
&\ddots&0& &&
\!\!\!\!\!\!0&\\
&&1&&&&\\\hline
&&&0&1\\
&0&&&\ddots&\ddots\\
&&&&&0\!\!&1\!\!
 \end{array}
 \right]
 \right)
\end{equation}
(which is obtained by an arbitrarily small perturbation of \eqref{(a1)}),
in which
\begin{equation}\label{i9i}
(-\beta _1,\dots,-\beta _n,\alpha _1,\dots,\alpha _m):=
\varepsilon (c_0,\dots,c_{r-1},1,0,\dots,0),
\end{equation}
$\varepsilon$ is an arbitrary nonzero complex number, and $c_{0},\dots,c_{r-1}$ are defined by
\begin{equation*}\label{amz}
c_0+c_1x+\dots+c_{r-1}x^{r-1}+x^r
:=\prod_{\lambda_i\ne\infty}
(x-\lambda _i)^{r_i}.
\end{equation*}
\end{theorem}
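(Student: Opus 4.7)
The strategy is to reduce both parts of the theorem to an explicit determinant computation on the miniversal deformation of $\mc L^T_m \oplus \mc L_n$. By Theorem~\ref{first} and Remark~\ref{rrs}, every matrix pair in a sufficiently small neighborhood of \eqref{(a1)} is equivalent to a miniversal pair. Since $\mc L^T_m \oplus \mc L_n$ has no Jordan summands, the miniversal form given by Theorem~\ref{first} specializes exactly to \eqref{min.case2}, parametrized by the $m+n$ complex numbers $\alpha_1,\dots,\alpha_m$ in the arrow-column of the first matrix and $\beta_1,\dots,\beta_n$ in the arrow-row of the second. Thus both parts reduce to determining the Kronecker form of \eqref{min.case2} as a function of $(\alpha,\beta)$.

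The key computation is the determinant identity
\begin{equation*}
\det(\mu A - B) = \pm\Bigl(\sum_{i=1}^{m}\alpha_i\mu^{n+i-1} - \sum_{j=1}^{n}\beta_j\mu^{j-1}\Bigr),
\end{equation*}
which I would establish by writing $\mu A - B$ as a block matrix with corners $P := \mu L_m^T - R_m^T$, $R := \mu L_n - R_n$, and $Q := \mu C_\alpha - C_\beta$, then evaluating $\det(\mu A - B) = \pm\, y(\mu)^T Q(\mu)\, w(\mu)$, where $y(\mu) = (1,\mu,\dots,\mu^{m-1})^T$ spans the one-dimensional cokernel of $P$ and $w(\mu) = (1,\mu,\dots,\mu^{n-1})^T$ spans the one-dimensional kernel of $R$; the resulting bilinear form expands to the displayed polynomial. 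If $(\alpha,\beta) = 0$ we obtain the pair \eqref{(a1)}; otherwise this polynomial is nonzero in $\mu$, making the pencil regular, and its Kronecker form is a direct sum of blocks $\mc D_{r_i}(\mu_i)$ with $\sum r_i = m+n-1$, proving part (a). For part (b), substituting \eqref{i9i} and using \eqref{amt} collapses the polynomial into $\pm\varepsilon\prod_{\lambda_i \neq \infty}(\mu - \lambda_i)^{r_i}$, which fixes the finite eigenvalues and their algebraic multiplicities, while the degree deficit $m+n-1-r$ accounts for the prescribed multiplicity at infinity.

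The main obstacle is that the determinant only pins down algebraic multiplicities; to conclude that the Kronecker form is exactly $\mc D_{r_1}(\lambda_1)\oplus\cdots\oplus\mc D_{r_t}(\lambda_t)$ I still need to show that each eigenvalue has geometric multiplicity one. The block-staircase shape of $\mu A - B$ is decisive here: the last $m-1$ rows of $P$ form an upper triangular $(m-1)\times(m-1)$ matrix with $-1$ on the diagonal, and the last $n-1$ columns of $R$ form a lower triangular $(n-1)\times(n-1)$ matrix with $-1$ on the diagonal; these live in disjoint rows and disjoint columns of $\mu A - B$, so their direct sum is an invertible $(m+n-2)\times(m+n-2)$ submatrix for every $\mu\in\mathbb C$. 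Therefore $\operatorname{rank}(\mu A - B) \geq m+n-2$ for all $\mu$, and at any eigenvalue $\lambda_i$ the vanishing of the determinant forces $\operatorname{rank}(\lambda_i A - B) = m+n-2$, yielding geometric multiplicity exactly $1$. The same rank bound applied to $A$ itself (singular iff $\alpha_m = 0$, iff $\infty$ is an eigenvalue) handles the infinite eigenvalue. Cyclicity at every eigenvalue then gives the desired Kronecker form.
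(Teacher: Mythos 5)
Your proposal is correct but takes a genuinely different route from the paper's. The paper splits into three cases depending on which $\alpha_i$ vanish and performs explicit equivalence transformations in each: when $\alpha_m\neq0$ it multiplies by the matrices $Q_p$, $Z_p$ of \eqref{ode} to reach a companion pair $(I,\Phi)$ and reads the Kronecker form off the Jordan form of $\Phi$; when $\alpha_k\neq0=\alpha_{k+1}=\cdots=\alpha_m$ it peels off a $\mc D_{m-k}(\infty)$ summand by hand via the elementary transformations proving \eqref{v5t}; and when all $\alpha_i=0$ it reduces to the preceding case through the transpose-and-reversal identity \eqref{mmi}. Your argument collapses all three cases into one conceptual step: the determinant computation reproduces the same characteristic polynomial $\sum_i\alpha_i\mu^{n+i-1}-\sum_j\beta_j\mu^{j-1}$ that appears in the paper's \eqref{uvr}, and the rank bound $\mathrm{rank}(\mu A-B)\geq m+n-2$ (for every $\mu\in\mathbb C$, and for $A$ itself to handle $\infty$) replaces the explicit transformations: it forces geometric multiplicity one at every eigenvalue, which together with the characteristic polynomial pins down the Kronecker form uniquely. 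This is shorter and avoids the paper's three-case analysis; the trade-off is that the paper's proof is fully constructive (it exhibits the equivalence matrices, fitting the constructive aims of the paper as a whole), whereas yours establishes the canonical form without producing the transformation. One small point you should flesh out in a final write-up: the identity $\det(\mu A-B)=\pm\,y(\mu)^{T}Q(\mu)w(\mu)$ requires a brief justification that the sign is $\mu$-independent; this follows from a Laplace expansion over the first $m-1$ columns, where each cofactor factors as the minor of $P$ with its $i$th row deleted times the $n\times n$ determinant formed from the $i$th row of $Q$ stacked above $R$, and the accumulated signs combine to a constant.
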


\begin{proof}
Let
$
(C,D)= \mc P\matp{\alpha _1\dots\alpha _m\\\beta _1\,\dots\,\beta _n}
$
denote the pair \eqref{min.case2}. Then
\begin{equation}\label{mmi}
\begin{split}
\setlength{\arraycolsep}{2pt}
(D^T,C^T)&=\left(\arr{{c|c}R_m&0\\\hline \ma{\ma{\beta _1\\\vdots\\\beta _n}&0&}&\ R^T_n},
\arr{{c|c}L_m&0\\\hline \ma{\\[-2pt]0\\\ma{\alpha _1&\dots&\alpha_m}}&\ L^T_n}\right)
                                       \\
     &\sim
\left(\arr{{c|c}R^T_n&
\ma{\ma{\beta _1\\\vdots\\\beta _n}&0&}
\\\hline 0&\ R_m\rule{0pt}{13pt}},
\arr{{c|c}L^T_n&\ma{\\[-2pt]0\\\ma{\alpha _1&\dots&\alpha_m}\!\!}\\\hline 0&\ L_m\rule{0pt}{13pt}}\right)\hspace{60pt}
                                \\
&\sim
\left(\arr{{c|c}L^T_n&
\ma{&0&\ma{\beta _n\\\vdots\\\beta _1}}
\\\hline 0&\ L_m\rule{0pt}{13pt}},
\arr{{c|c}R^T_n&\ma{\ma{\alpha _m&\dots&\alpha_1\!\!}\\[10pt] 0\\[-10pt]\phantom{.}}\\\hline 0&\ R_m\rule{0pt}{13pt}}\right)=\mc P\matp{\beta _n\,\dots\,\beta _1\\\alpha _m\dots\alpha _1},
\end{split}
\end{equation}
in which
the third pair is obtained from the second by reversing the order of rows in each horizontal strip and reversing the order of columns in each vertical strip.

By Theorem \ref{first}, there is a neighborhood of \eqref{(a1)}, in which each pair is equivalent to the pair
\begin{equation}\label{dlk}
\mc P\matp{\alpha _1\dots\alpha _m\\\beta _1\,\dots\,\beta _n}
\end{equation}
for some $\alpha _1,\dots,\alpha _m,\beta _1,\dots,\beta _n$.
The following three cases are possible.
\medskip

\emph{Case 1: $\alpha _m\ne 0$ in \eqref{dlk}.}
In this case,
$\mc P\matp{\alpha _1\dots\alpha _m\\\beta _1\,\dots\,\beta _n}\sim(I_{m+n-1},\Phi)$ with
\begin{align}\label{gtm}
 \Phi&:=\begin{bmatrix}
 -c_{m+n-2}&\dots&-c_1&-c_0\\
 1&&0&0\\&\ddots&&\vdots\\0&&1&0
 \end{bmatrix},\\ \label{gjm}
(c_0\dots,c_{m+n-2})&:=
\alpha _m^{-1}(
-\beta_1,\dots,-\beta _n,
\alpha _{1},\dots, \alpha_{m-1})
\end{align}
because
\begin{equation}\label{ode}
\mc P\matp{\alpha _1\dots\alpha _m\\\beta _1\,\dots\,\beta _n}(Q_{m-1}\oplus Z_{n})=
 (Q_{m}\oplus Z_{n-1}) (I_{m+n-1},\Phi),
\end{equation}
in which
\[
Q_p:=\mat{\alpha _m&\alpha _{m-1}&\alpha _{m-2}&\ddots\\
&\alpha _m&\alpha _{m-1}&\ddots\\
&&\alpha _m&\ddots\\
&&&\ddots\\
}\ (p\text{-by-}p),\qquad
Z_p:=\mat{0&&1\\&
\udots&\\1&&0}\ (p\text{-by-}p).
\]
For example, if $m=n=4$, then \eqref{ode} takes the form
\begin{multline*}
\left(\rule{0pt}{60pt}
\left[\begin{MAT}(e)[0pt]{ccc3cccc}
1&0&0&&&&\alpha_1\\
0&1&0&&&&\alpha_2\\
0&0&1&&&&\alpha_3\\
0&0&0&&&&\alpha_4
\\3
&&&1&0&0&0\\
&&&0&1&0&0\\
&&&0&0&1&0
\\
 \end{MAT}\right],
\left[\begin{MAT}(e)[0.5pt]{ccc3cccc}
0&0&0 & \beta_1&\beta_2&\beta_3&\beta_4\\
1&0&0&&&&\\
0&1&0&&&&\\
0&0&1&&&&
\\3
&&&0&1&0&0\\
&&&0&0&1&0\\
&&&0&0&0&1
\\
 \end{MAT}\right]
\right)\left[\begin{MAT}(e)[0pt]{ccc3cccc}
\alpha_4 &\alpha_3&\alpha_2 &&&&\\
&\alpha_4&\alpha_3&&&&\\
&&\alpha_4&&&&\\3
&&&&&&1
\\
&&&&&1&\\
&&&&1 &&\\
&&&1 &&&
\\
 \end{MAT}\right]
    \\ 
=\left[\begin{MAT}(e)[0.5pt]{cccc3ccc}
\alpha_4&\alpha_3&\alpha_2&\alpha_1&&&\\
&\alpha_4&\alpha_3&\alpha_2&&&\\
&&\alpha_4&\alpha_3&&&\\
&&&\alpha_4&&&
\\3
&&&&&&1 \\
&&&&&1 &\\
&&&&1 &&
\\
 \end{MAT}\right]
\left(\rule{0pt}{65pt}
I_7
,
\left[\begin{MAT}(b){ccc3cccc}
-c_6&-c_5&-c_4 & -c_3&-c_2&-c_1&-c_0\\
1&0&0&&&&\rule{0pt}{12pt}\\
0&1&0&&&&\\
0&0&1&&&&
\\3
&&&1&0&0&0\\
&&&0&1&0&0\\
&&&0&0&1&0
\\
 \end{MAT}\right]
\right)
\end{multline*}
in which
\[
\alpha_4 (c_0,c_1,c_2,c_3,c_4,c_5,c_6)=
(-\beta_1,-\beta_2,-\beta_3,-\beta_4,
\alpha_1,\alpha_2,\alpha_3).
\]

The Jordan canonical form of
\eqref{gtm} is  $J_{r_1} (\lambda_1)\oplus\dots\oplus J_{r_t}(\lambda_t)$ with distinct $\lambda_1,\dots,\lambda_t\in\mathbb C$; its characteristic polynomial is
\begin{equation}\label{uvr}
\begin{gathered}
(x-\lambda _1)^{r_1}\cdots(x-\lambda _t)^{r_t}=c_0+c_1x+\dots+c_{m+n-2}x^{m+n-2}+
x^{m+n-1}\\
=
\alpha_m^{-1}(
-\beta_1-\beta_2x-\dots-\beta _nx^{n-1}+
\alpha_1x^n+\alpha_2x^{n+1}+\dots+
\alpha_mx^{m+n-1}).
\end{gathered}
\end{equation}

We have proved that
\begin{equation}\label{mmw}
\mc P\matp{\alpha _1\dots\alpha _m\\\beta _1\,\dots\,\beta _n}\sim (I,\Phi)\sim \mc D_{r_1} (\lambda_1)\oplus\dots\oplus \mc D_{r_t}(\lambda_t
)\qquad \text{if $\alpha_m\ne 0$};
\end{equation}
it is a pair of the form \eqref{a1def}, which
proves the statement (a) in Case 1.

By \eqref{mmw}, each pair \eqref{a1def} with distinct \emph{nonzero} eigenvalues $\lambda_1,\dots,\lambda_t\in{\mathbb C}\cup\infty$ is equivalent to $\mc P\matp{\alpha _1\dots\alpha _m\\\beta _1\,\dots\,\beta _n}$ defined by \eqref{uvr}. Then \eqref{gjm} holds, and so $\mc P\matp{\alpha _1\dots\alpha _m\\\beta _1\,\dots\,\beta _n}$ is the pair \eqref{min.case2} defined by \eqref{i9i} with $\varepsilon =\alpha _m$. The pair $\mc P\matp{\alpha _1\dots\alpha _m\\\beta _1\,\dots\,\beta _n}$ is also equivalent to the pair \eqref{min.case2} defined by \eqref{i9i} with an arbitrary nonzero $\varepsilon$ since
\begin{equation}\label{3g3}
\setlength{\arraycolsep}{1.pt}
\left(\mat{L_m^T&P\\0&L_n},
\mat{R_m^T&Q\\0&R_n}
\right)\mat{I_{m-1}&0\\0&\delta I_n}
=
\mat{I_{m}&0\\0&\delta I_{n-1}}
\left(\mat{L_m^T&\,\delta P\\0&L_n},
\mat{R_m^T&\,\delta Q\\0&R_n}
\right)
\end{equation}
for an arbitrary nonzero $\delta$. This proves the statement (b) if all $\lambda _i\ne\infty$.
\medskip

\emph{Case 2: $\alpha _k\ne 0=\alpha _{k+1}=\dots=\alpha_m$ for some $k<m$ in \eqref{dlk}.}
Let us show that
\begin{equation}\label{v5t}
\mc P\matp{\alpha _1\dots\alpha _m\\\beta _1\,\dots\,\beta _n}=\mc P\matp{\alpha _1\dots\alpha _k\,0\dots 0\\\beta _1\ \dots\ \beta _n}\sim\mc P\matp{\alpha _1\,\dots\,\alpha _{k}\\\beta _1\,\dots\, \beta _n}
\oplus (J_{m-k}(0),I_{m-k})\qquad \text{ if }\alpha _k\ne 0.
\end{equation}
For clarity, we first prove the following special case of \eqref{v5t}:
\begin{equation}\label{v3t}
\mc P\matp{\alpha _1\,\alpha _2\:0\; 0\\ \beta _1\beta_2\beta_3\beta_4}\sim\mc P\matp{\alpha _1\, \alpha _2\\\beta _1\beta _2\beta_3\beta _4}\oplus(J_2(0),I_2)\qquad \text{if }\alpha _2\ne 0.
\end{equation}
The first pair in \eqref{v3t} is
\[
\setlength{\arraycolsep}{3pt}
(C,D):=\left(\arrr{
1&0&0 & &&&\alpha _1\\
0&1&0 & &&&\alpha _2\\
0&0&1 & &&&0\\
0&0&0 & &&&0\\\hline
&& & 1&0&0&0\\
&& & 0&1&0&0\\
&& & 0&0&1&0
},\:
\arrr{0&0&0 & \beta _1&\beta _2&\beta_3&\beta _4\\
1&0&0 & &&&\\
0&1&0 & &&&\\
0&0&1 & &&&\\\hline
&& & 0&1&0&0\\
&& & 0&0&1&0\\
&& & 0&0&0&1
}\right),\ \alpha _2\ne 0.
\]
It is sufficient to make zero the entry $(2,2)$ of $C$; i.e., to prove that
\begin{equation}\label{anb}
(C,D)\sim
\left(\,\left[\begin{MAT}(b)[1.5pt]{ccc3cccc}
1&0&0&&&&\alpha _1\\
0&0&0&&&&\alpha _2\\
0&0&1&&&&0\\
0&0&0&&&&0
\\3
&&&1&0&0&0\\
&&&0&1&0&0\\
&&&0&0&1&0
\addpath{(1,3,4)uurrddll}
\\
 \end{MAT}\right]\,,\,
\left[\begin{MAT}(b)[1.5pt]{ccc3cccc}
0&0&0 & \beta _1&\beta _2&\beta_3&\beta _4\\
1&0&0&&&&\\
0&1&0&&&&\\
0&0&1&&&&
\\3
&&&0&1&0&0\\
&&&0&0&1&0\\
&&&0&0&0&1
\addpath{(1,3,4)uurrddll}
\\
 \end{MAT}\right]
\right)
\end{equation}
since the pair $\left(\matt{0&1\\0&0}, \matt{1&0\\0&1}\right)$ in the squares is a direct summand.
We make this zero preserving the other entries by the following sequence of elementary transformations with $(C,D)$:
\begin{itemize}
  \item
Substituting column 7 multiplied by $\alpha _2^{-1}$ from column 2, we make zero the entry $(2,2)$ of $C$:
\[
\left(\arrr{1&*&0 & &&&\alpha _1\\
0&0&0 & &&&\alpha _2\\
0&0&1 & &&&0\\
0&0&0 & &&&0\\\hline
&& & 1&0&0&0\\
&& & 0&1&0&0\\
&& & 0&0&1&0
},\:
\arrr{0&* &0 & \beta _1&\beta _2&\beta_3&\beta _4\\
1&0&0 & &&&\\
0&1&0 & &&&\\
0&0&1 & &&&\\\hline
&& & 0&1&0&0\\
&& & 0&0&1&0\\
&*& & 0&0&0&1
}\right).
\]
This transformation may spoil the entries  denoted by $*$ in columns 2 of $C$ and $D$; we restore them as follow.

  \item
We restore column 2 of $C$ by adding column 1 (multiplied by a scalar) to column 2. This transformation spoils entry $(2,2)$ of $D$; we restore it and the entries denoted by stars in column 2 of $D$ by adding row 3 to rows 1, 2, and 7. We  obtain
 \[
\setlength{\arraycolsep}{4pt}
\left(\arrr{
1&0&* & &&&\alpha _1\\
0&0&* & &&&\alpha _2\\
0&0&1 & &&&0\\
0&0&0 & &&&0\\\hline
&& & 1&0&0&0\\
&& & 0&1&0&0\\
&& *& 0&0&1&0
},\:
\arrr{
0&0&0 & \beta _1&\beta _2&\beta_3&\beta _4\\
1&0&0 & &&&\\
0&1&0 & &&&\\
0&0&1 & &&&\\\hline
&& & 0&1&0&0\\
&& & 0&0&1&0\\
&& & 0&0&0&1
}\right).
\]

  \item
We restore column 3 of $C$ by adding columns 1, 6, and 7, which spoils column 3 of $D$. We restore it by adding row 4 and obtain \eqref{anb}, which proves \eqref{v3t}.
\end{itemize}

The equivalence \eqref{v5t} for an arbitrary pair $(C,D)=\mc P\matp{\alpha _1\dots\alpha _m\\\beta _1\,\dots\,\beta _n}$ with $\alpha _k\ne 0=\alpha _{k+1}=\dots=\alpha_m$ is proved in the same way: we make zero the entry $(k,k)$ of $C$ by adding the last column, which may spoil the entries $(1,k),\dots,(k-1,k)$ of  $C$; they are made zero by adding  columns $1,\dots,k-1$. This spoils
column $k$ of $D$; we restore it by row transformations. This spoils
column $k+1$ of $C$; we restore it by column transformations, and so on, until we obtain the equivalence \eqref{v5t}.

By \eqref{mmw} and \eqref{v5t},
\begin{equation}\label{doc}
\mc P\matp{\alpha _1\dots\alpha _m\\\beta _1\,\dots\,\beta _n}=\mc P\matp{\alpha _1\dots\alpha _k\,0\dots 0\\\beta _1\ \dots\ \beta _n}\sim
\mc D_{r_1} (\lambda_1)\oplus\dots\oplus \mc D_{r_{t-1}}(\lambda_{t-1})
\oplus \mc D_{m-k}(\infty),
\end{equation}
which proves the statement (a) in Case 2. Since
\begin{equation}\label{uvr1}
\alpha_k^{-1}(-\beta_1
 -\dots-\beta _nx^{n-1}+\alpha_1x^{n}
+\dots+\alpha_kx^{n+k-1})
=
\prod_{i=1}^{t-1}(x-\lambda _i)^{r_i},
\end{equation}
the statement (b) holds for $\varepsilon =\alpha_k$.
It holds for an arbitrary nonzero $\varepsilon$  due to \eqref{3g3}.
\medskip

\emph{Case 3: $\alpha _1=\dots=\alpha _m=0$ in \eqref{dlk}}; that is, $(C,D)=\mc P\matp{0\  \dots\  0\ \\\beta _1\,\dots\,\beta _n}$. If $\beta _1=\dots=\beta _n=0$, then $(C,D)$ is the pair \eqref{(a1)}.
Let $\beta _k\ne 0=\beta _{k-1}=\dots=\beta _1$ for some $k\ge 1$. By \eqref{mmi}, \eqref{doc}, and \eqref{uvr1}, we have
\[
(D^T,C^T)\sim \mc P\matp{\beta _n\,\dots\,\beta _1\\0\ \dots\ 0}
=\mc P\matp{\beta _n\,\dots\,\beta _k\,0\dots 0\\ 0\ \dots\ 0}\sim
\mc D_{r_1} (\mu_1)\oplus\dots\oplus\mc D_{r_t} (\mu_t)
\oplus \mc D_{k-1}(\infty),
\]
in which $\mu_1,\dots,\mu_{t}$ are distinct and
\[
(x-\mu _1)^{r_1}\cdots(x-\mu _{t})^{r_{t}}=\beta _k^{-1}
(\beta_nx^m+\beta_{n-1}x^{m+1}
+\dots+\beta _kx^{m+n-k}).
\]

Let $\beta _n=\beta_{n-1}=\dots=\beta _{l+1}= 0\ne\beta _{l}$ for some $l\ge k$. Then
\[
(x-\mu _1)^{r_1}\cdots(x-\mu _{t})^{r_{t}}=\beta _k^{-1}
(\beta_lx^{m+n-l}+\beta_{l-1}x^{m+n-l+1}
+\dots+\beta _kx^{m+n-k}).
\]
Set $\mu _t=0$ and rewrite this equality as follows:
\begin{equation}\label{214}
(x-\mu _1)^{r_1}\cdots(x-\mu _{t-1})^{r_{t-1}}x^{m+n-l}=\beta _k^{-1}
(\beta_l+\beta_{l-1}x
+\dots+\beta _kx^{l-k})x^{m+n-l}.
\end{equation}
This proves that
\[
(D^T,C^T)\sim
\mc D_{r_1} (\mu_1)\oplus\dots\oplus\mc D_{r_{t-1}} (\mu_{t-1})\oplus
\mc D_{m+n-l}(0)
\oplus \mc D_{k-1}(\infty),
\]

If $k=l$, then $t=1$ and
$
(C,D)\sim \mc D_{m+n-l}(\infty)
\oplus \mc D_{k-1}(0).
$

Let $l>k$, then $t\ge 2$.
Setting
\begin{equation*}\label{7yt}
\lambda _1:=\mu_1^{-1},\ \dots,\ \lambda _{t-1}:=\mu_{t-1}^{-1},
\end{equation*}
 we find that
\[
(C,D)\sim \mc D_{r_1} (\lambda_1)\oplus\dots\oplus \mc D_{r_{t-1}}(\lambda_{t-1})
     \oplus
\mc D_{m+n-l}(\infty)
\oplus \mc D_{k-1}(0).
\]
This proves the statement (a) in Case 3.

Replacing $x$ by $x^{-1}$ in the polynomials \eqref{214} and equating the leading coefficients, we obtain
\[
(x^{-1}-\lambda_1^{-1})^{r_1}
\cdots(x^{-1}-\lambda_{t-1}^{-1})^{r_{t-1}}=\beta _k^{-1}
(\beta_l+\beta_{l-1}x^{-1}
+\dots+\beta _kx^{k-l}),
\]
and so
\[
(x-\lambda_1)^{r_1}\cdots
 (x-\lambda_{t-1})^{r_{t-1}}
 =
 \beta _l^{-1}
(\beta_k+\beta_{k+1}x
+\dots+\beta _lx^{k-l}).
\]
This proves the statement (b) for $\varepsilon =-\beta _l$. It holds for an arbitrary nonzero $\varepsilon $ due to \eqref{3g3}.
\end{proof}

\section{Perturbations of Jordan matrices}
\label{jord}

By Lipschitz's property (see \cite{rod} or \cite{ala}), each matrix that is obtained by an
arbitrarily small
perturbation of $I_n$ is reduced to
$I_n$ by equivalence transformations that are close to the identity transformation. Hence, each pair that is obtained by an
arbitrarily small
perturbation of
$(I_n,B)$ is reduced to a pair of the form
$(I_n,C)$ by equivalence transformations that are close to the identity transformation.

Hence, the theory of perturbations of matrix pairs $(A,B)$ with a nonsingular $A$ under equivalence is reduced to the theory of perturbations of square matrices under similarity. By Theorem
\ref{th}, it reduces to the theory of perturbations of Jordan matrices with a single eigenvalue.

The closures of orbits of Jordan matrices under similarity have been described by
Den  Boer and Thijsse \cite{den} and, independently, by Markus and Parilis \cite{mar}; see also \cite[Theorem 2.1]{kag2}. In this section, we describe the closures of orbits of Jordan matrices in the form that is used in the proof of Theorem I.

\begin{theorem}\label{jut}
Let $J$ be a
Jordan matrix with a
single eigenvalue
$\lambda $.
\begin{itemize}
  \item[\rm(a)] If $J$ is a Jordan block, then $\langle
J\rangle$ has no successors.

  \item[\rm(b)]
Let $J$ have at least $2$ Jordan blocks. Write it as follows:
\begin{equation}\label{aa1}
J=P\oplus J_{p}(\lambda)\oplus
J_{q}(\lambda)\oplus Q,\qquad
p\le q,
\end{equation}
in which $P$ is a direct sum of Jordan blocks of sizes $\le p$ and $Q$ is a direct sum of Jordan blocks of sizes $\ge q$ ($P$ and $Q$ can be zero).
Define the Jordan matrix
\begin{equation}\label{aa2}
J_{p,q}:=P\oplus J_{p-1}(\lambda)\oplus
J_{q+1}(\lambda)\oplus Q,
\end{equation}
in which $J_{p-1}(\lambda)$ is
absent if $p=1$.
Then
$\langle
J_{p,q}\rangle$ immediately succeeds $\langle J\rangle$, and
each immediate
successor of $\langle J\rangle$ is $\langle
J_{p,q}\rangle$ for some $p$ and $q$.
\end{itemize}
\end{theorem}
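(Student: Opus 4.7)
The plan is to reduce everything to the dominance order on partitions of $n$ via the rank characterization of similarity closures. The key preliminary lemma is that if $\langle A\rangle\le\langle B\rangle$ as similarity orbits of $n\times n$ matrices then $\operatorname{rank}((A-\mu I)^k)\le \operatorname{rank}((B-\mu I)^k)$ for every $\mu\in\mathbb C$ and $k\ge 0$, and this becomes an ``iff'' once both matrices are restricted to having only the eigenvalue~$\lambda$. For part~(a), taking $\mu=\lambda$ and $k=n$ in the rank inequality forces $(N-\lambda I)^n=0$ for any $N$ with $\langle J_n(\lambda)\rangle<\langle N\rangle$, so $N$ has only the eigenvalue~$\lambda$ and is similar to some $J_\pi$. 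Since $(n)$ is the unique maximum of the dominance lattice on partitions of $n$, we get $\pi\le_{\mathrm{dom}}(n)$, hence $\langle N\rangle\le\langle J_n(\lambda)\rangle$, contradicting strict inequality; so $\langle J_n(\lambda)\rangle$ has no successor at all.

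For part~(b), to exhibit $\langle J_{p,q}\rangle>\langle J\rangle$ I would apply Theorem~\ref{kurf}(b) with $r=1$ to the direct summand $\mc D_p(\lambda)\oplus\mc D_q(\lambda)$ of $(I,J)$. This gives an arbitrarily small perturbation whose Kronecker form on that block is $\mc D_{p-1}(\lambda)\oplus\mc D_{q+1}(\lambda)$; leaving $P$ and $Q$ untouched, the whole perturbed pair has Kronecker form $(I,J_{p,q})$, which proves $\langle J\rangle<\langle J_{p,q}\rangle$.

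For the immediacy step, suppose $\langle J\rangle<\langle N\rangle<\langle J_{p,q}\rangle$. The rank criterion again forces $N$ to have only the eigenvalue~$\lambda$, so $N\sim J_\pi$ with $\pi(J)<_{\mathrm{dom}}\pi<_{\mathrm{dom}}\pi(J_{p,q})$. Writing the partial-sum vectors $S_k=\sum_{i\le k}(\cdot)_i$ of the partitions in decreasing order, the difference $S(\pi(J_{p,q}))-S(\pi(J))$ is the indicator of the interval $[J_0,I_0)$ of row indices that $\pi(J)$ fills with rows of size $q$ followed by rows of size $p$. Any strictly intermediate $\pi$ gives an indicator-like difference supported inside that interval; the hypothesis that $P$ has blocks of size $\le p$ and $Q$ has blocks of size $\ge q$ (so that no part of $\pi(J)$ lies strictly between $p$ and $q$) eliminates every would-be intermediate run of boxes, yielding the required contradiction. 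For the converse, every immediate successor $\langle N\rangle$ of $\langle J\rangle$ is again of the form $\langle J_\pi\rangle$ with $\pi$ a cover of $\pi(J)$ in dominance; by Brylawski's description of covers, such a $\pi$ arises from moving one box from a row of size $p$ to a row of size $q$ with the adjacency condition satisfied, i.e.\ $\pi=\pi(J_{p,q})$.

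The main obstacle is the combinatorial analysis in the immediacy step: one must rule out \emph{every} partition $\pi$ strictly between $\pi(J)$ and $\pi(J_{p,q})$. Doing so requires a careful case split on whether $p<q$ or $p=q$ and on the multiplicities of $p$ and $q$, together with the observation that the only legal ``runs'' in the partial-sum difference $S(\pi)-S(\pi(J))$ must start at row indices where $\pi(J)$ strictly decreases; the hypothesis on $P$ and $Q$ is precisely what forces the only legal run to be the full interval $[J_0,I_0)$, so $\pi=\pi(J_{p,q})$.
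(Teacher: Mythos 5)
Your strategy is essentially the same as the paper's: translate the closure order to the dominance lattice of partitions via the Weyr/rank criterion (your ``preliminary lemma'' is Lemma~\ref{caose4}), and then argue combinatorially. Part~(a) matches the paper. For the inequality $\langle J\rangle<\langle J_{p,q}\rangle$, applying Theorem~\ref{kurf}(b) (equivalently, Theorem~\ref{kur}(b)) is a valid alternative to the paper's direct partial-sum computation, and it is not circular since Theorem~\ref{kur}(b) is proved by an explicit similarity calculation independently of Theorem~\ref{jut}.

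The genuine gap is the immediacy argument, which you yourself flag as ``the main obstacle''. Two things are missing. First, the assertion that the hypothesis ``$P$ has blocks of size $\le p$ and $Q$ has blocks of size $\ge q$'' (so that no part of $\pi(J)$ lies strictly between $p$ and $q$) already eliminates every intermediate partition is not justified and is in fact delicate once $p$ or $q$ occurs with multiplicity in $J$: a move among repeated parts can produce a partition strictly between $\pi(J)$ and $\pi(J_{p,q})$, and the ``indicator-like difference supported inside that interval'' language is too imprecise to rule this out. Second, the appeal to ``Brylawski's description of covers'' only pays off after one carefully matches Brylawski's two cover types (adjacent rows $j=i+1$, or equal parts $\lambda_i=\lambda_j$) against the $(p,q)$-decompositions that the theorem allows; that matching, together with the case analysis on multiplicities, \emph{is} the proof, not a footnote. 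In the paper these issues are handled self-containedly: Fact~2 shows that between $J$ and any strict successor there sits some $J_{p,q}$ (via a Weyr partial-sum computation that treats $p=q$ and $p<q$ separately), and Fact~3 rules out a $J_{p',q'}$ strictly between $J$ and $J_{p,q}$ by comparing the block-count vectors $t(\cdot)$ in the two lexicographic orders $\stackrel{l}{\prec}$ and $\stackrel{r}{\prec}$. Those two arguments are precisely what is missing from your sketch and must be supplied for the proof to stand.
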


The \emph{Weyr
characteristic} of a
square matrix $A$ for an eigenvalue
$\lambda$ is the non-increasing
sequence $(m_{1},
m_{2},\dots)$ in which
$m_i$ is the number of
Jordan blocks
$J_l(\lambda )$ of
size $l\ge i$ in the
Jordan form of $A$.

In the proof of Theorem \ref{jut}, we use the fact that
each nilpotent matrix $A$ is similar to a matrix of the form
\begin{equation}\label{weyr}
W=\begin{bmatrix}
0_{m_{1}}&F_1&&0\\
&0_{m_{2}}&\ddots&\\
&&\ddots&F_{k-1}\\
0&&&0_{m_{k}}
\end{bmatrix},\qquad F_i:= \begin{bmatrix}
I_{m_{i+1}}\\0
\end{bmatrix},
\end{equation}
which
is permutation similar to the Jordan canonical form of $A$.
The matrix $W$
has been called in \cite{ser} the \emph{Weyr
canonical form} of $A$. Now this term is generally accepted; see historical remarks in \cite[pp.
80--82]{omear}. The
Weyr characteristic of
$A$ for its single eigenvalue $0$ is  $(m_{1},
m_{2},\dots)$. The latter holds since the equalities
\begin{equation*}\label{nilmah}
W^2=
\begin{bmatrix}
0_{m_{1}}&0&F_1F_2&&0\\
&0_{m_{2}}&0&\ddots\\
&&0_{m_{3}}&\ddots&F_{k-2}F_{k-1}\\
&&&\ddots&0
\\ 0&&&&0_{m'_{k}}
\end{bmatrix},\ \ W^3=\cdots,\ \ \dots
\end{equation*}
imply
\begin{align*}\label{tre}
m_1&=\nullity W=\nullity
A,\\ m_1+m_2&=\nullity
W^2=\nullity A^2,\\
m_1+m_2+m_3&=\nullity
W^3=\nullity
A^3,\\&\dots
\end{align*}

\begin{lemma}      \label{caose4}
Let $J$ and
$J'$ be
Jordan matrices with a
single eigenvalue
$\lambda $. Let
$(m_{1}, m_{2},\dots)$
and $(m'_{1},
m'_{2},\dots)$ be
their Weyr
characteristics. Write
\begin{equation}\label{jyr}
s_i:=m_1+\dots+m_i,\qquad
s'_i:=m'_1+\dots+m'_i
\end{equation}
for $i=1,2,\dots$.
Then
\begin{equation}\label{kli}
\langle J\rangle\le
\langle J'\rangle
\quad\Longleftrightarrow\quad
s_i\ge s'_i\text{ for
all }i.
\end{equation}
\end{lemma}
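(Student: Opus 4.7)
The plan is to express $s_i$ as the semicontinuous spectral invariant $\nullity\bigl((J-\lambda I)^i\bigr)$, so that the forward implication becomes a standard semicontinuity argument, and then, for the backward implication, to realize the dominance order combinatorially by chaining together the single-coin-move degenerations already proved in Theorem~\ref{kurf}(b). The identity $s_i=\nullity\bigl((J-\lambda I)^i\bigr)$ follows from the Weyr canonical form~\eqref{weyr} and the explicit computation of $\nullity(W^i)$ displayed just before the statement of the lemma; equivalently, $s_i=n-\operatorname{rank}\bigl((J-\lambda I)^i\bigr)$, where $n$ is the common size of $J$ and $J'$.

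For the forward implication, I would assume $\ll J\rr\le\ll J'\rr$ and pick a sequence of invertible matrices $S_k$ with $S_kJ'S_k^{-1}\to J$. Then $(S_kJ'S_k^{-1}-\lambda I)^i=S_k(J'-\lambda I)^iS_k^{-1}\to(J-\lambda I)^i$; every matrix in the sequence has rank $n-s'_i$, and since rank is lower semi-continuous in the entries, $\operatorname{rank}\bigl((J-\lambda I)^i\bigr)\le n-s'_i$, whence $s_i\ge s'_i$.

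For the backward implication, let $\mu$ and $\nu$ be the partitions of Jordan block sizes of $J$ and $J'$. The Weyr sequences $m$ and $m'$ are the conjugate partitions of $\mu$ and $\nu$, and conjugation reverses dominance, so the hypothesis $s_i\ge s'_i$ for all $i$ together with the common total $n$ is exactly $\mu\le\nu$ in the dominance order on partitions of $n$. I would invoke the classical fact that the dominance order is generated by single coin moves---operations that decrease one part by $1$ and increase a smaller part by $1$---to obtain a chain $\mu=\mu^{(0)}<\mu^{(1)}<\cdots<\mu^{(N)}=\nu$ in which each step replaces two parts of sizes $p-1$ and $q+1$ by parts of sizes $p$ and $q$ for some $1\le p\le q$. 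By transitivity it suffices to handle one step, which after splitting off the common unchanged Jordan blocks reduces to proving
\[
\ll J_p(\lambda)\oplus J_q(\lambda)\rr\le\ll J_{p-1}(\lambda)\oplus J_{q+1}(\lambda)\rr\quad\text{for }1\le p\le q.
\]
This is the $r=1$ case of Theorem~\ref{kurf}(b): $\mc D_{p-1}(\lambda)\oplus\mc D_{q+1}(\lambda)$ is the Kronecker canonical form of an arbitrarily small perturbation of $\mc D_p(\lambda)\oplus\mc D_q(\lambda)$, and since the first matrix remains $I_{p+q}$ throughout that perturbation, it is actually a similarity perturbation of $J_p(\lambda)\oplus J_q(\lambda)$, delivering the desired closure relation. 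Chaining the covers yields $\ll J\rr\le\ll J'\rr$.

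The main obstacle is really only bookkeeping: keeping the direction of dominance consistent through the Jordan/Weyr conjugation, and confirming that the matrix-pair perturbation supplied by Theorem~\ref{kurf}(b) restricts to a similarity perturbation of the corresponding Jordan matrix (which it does, precisely because the first matrix in the miniversal pair stays equal to $I$ for $\lambda\in\mb C$). The combinatorial cover-generation of the dominance order is classical; it could also be sidestepped by applying Theorem~\ref{kurf}(b) with $r>1$ to perform a larger coin move in a single step.
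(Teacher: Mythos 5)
Your proof is correct, but it takes a genuinely different route from the paper in both directions. For ``$\Longrightarrow$'' the paper runs a compactness argument: it Gram--Schmidt-orthogonalizes the similarity matrices $S_i$, extracts a limit unitary $U$, and reads the inequalities $\nullity J^i\ge m'_1+\dots+m'_i$ off the block form of $U^{-1}J^iU$; you replace all of this by the observation that $s_i=n-\operatorname{rank}\bigl((J-\lambda I)^i\bigr)$ together with lower semicontinuity of rank along the sequence $S_k J' S_k^{-1}\to J$, which is shorter and loses nothing. For ``$\Longleftarrow$'' the paper perturbs once: it asserts that for small $\varepsilon$ the Weyr form of $\varepsilon W'+W$ is $W'$, so $W$ lies in the closure of $\langle W'\rangle$; you instead pass to the conjugate (Jordan-block-size) partitions, invoke the classical fact that the dominance order is generated by single-box moves, and realize each cover $\langle J_p(\lambda)\oplus J_q(\lambda)\rangle\le\langle J_{p-1}(\lambda)\oplus J_{q+1}(\lambda)\rangle$ by the explicit perturbation of Theorem~\ref{kur}(b). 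Your version is more constructive and recycles machinery already in the paper, at the cost of importing an external combinatorial lemma (the Brylawski/Hardy--Littlewood--P\'olya-type generation of dominance by box moves) that the paper's one-step argument avoids; conversely, the paper's step ``the Weyr form of $\varepsilon W'+W$ is $W'$'' is left as an implicit rank computation, which your route makes unnecessary.

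Two small points. First, you should cite Theorem~\ref{kur}(b) rather than Theorem~\ref{kurf}(b): parts (a) of Theorems~\ref{kurf} and \ref{kur} rest on Theorem~\ref{jut}, whose proof uses this very lemma, whereas \ref{kur}(b) is an explicit similarity computation independent of it; your argument needs only that part, and, as you note, the pair perturbation restricts to a similarity perturbation because the first matrix stays equal to $I$. Second, your description of the covering move is reversed --- the upward step in dominance replaces parts $(p,q)$ with $p\le q$ by $(p-1,q+1)$, not the other way around --- although the closure relation you actually reduce to and verify is the correct one; and the closing remark about sidestepping the cover-generation fact by taking $r>1$ does not really sidestep it, since one still needs to know that dominance is generated by moves of this shape.
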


\begin{example}
Let
\[
J=J_3(\lambda )\oplus
J_4(\lambda )\oplus J_4(\lambda ),\qquad
J'=J_3(\lambda )\oplus
J_3(\lambda )\oplus J_5(\lambda ).
\]
Then
\[
\begin{array}{llll}
m_1=m_2=m_3=3,\quad&
m_4=2,\quad&
m_5=0,\quad&m_6=m_7=\dots=0,
                   \\
  m'_1=m'_2=m'_3=3,\quad&
m'_4=1,\quad& m'_5=1,\quad&
m'_6=m'_7=\dots=0,
\end{array}
\]
and so
\[
\begin{array}{lllll}
s_1=3,\quad& s_2=6,\quad&
s_3=9,\quad&
s_4=11,\quad& s_5=s_6=\dots=11,
               \\
s'_1=3,\quad& s'_2=6,\quad&
s'_3=9,\quad& s'_4=10,\quad&
s'_5=s'_6=\dots=11.
\end{array}
\]
Hence, $\langle J\rangle<
\langle J'\rangle$.
\end{example}

\begin{proof}[Proof of Lemma \ref{caose4}] Let $J$ be a Jordan matrix with a single eigenvalue $\lambda $. Then $/\ll J-\lambda I\rr= \ll J\rr-\lambda I$ and   $\overline{\ll J-\lambda I\rr}= \overline{\ll J\rr}-\lambda I$ for their closures. Hence, we must prove  \eqref{kli} only for $J$ and $J'$  with the single eigenvalue $\lambda =0$.

$\Longleftarrow$.
Let $W$ and $W'$ be Weyr
canonical matrices of the same size with the single eigenvalue $0$. Let their
Weyr characteristics
$(m_1, m_2, \dots,
m_k)$ and $(m'_1, m'_2, \dots,
\ m'_k)$ satisfy
$s_1\ge s'_1$, $s_2\ge
s'_2$, \dots .
Then for each
sufficiently small
$\varepsilon $ the
Weyr canonical form of
$\varepsilon W'+W$ is
$W'$. If $\varepsilon_i \to 0$, then $\varepsilon_i W'+W \to
W$. Hence $\langle W\rangle\le
\langle W'\rangle$.
\medskip

$\Longrightarrow$. Let $J$ be a Jordan matrix  with the single eigenvalue $\lambda =0$.
Let $J'$ be a Jordan
matrix such that each
neighborhood of $J$
contains a matrix whose Jordan canonical form is $J'$. This
means that there is a
convergent sequence
\begin{equation}
\label{iyt} A_1, \,
A_2, \, \dots \, \to
\, J
\end{equation}
in which all $A_i$ are
similar to $J'$. All $A_i$  have the
same characteristic
polynomial $f(x)$.
Since the coefficients
of characteristic
polynomial
continuously depend on
the matrix entries,
$f(x)$ is also the
characteristic
polynomial of
$J$. Hence, $f(x)=x^n$, and so $J'$ is nilpotent.

Since all $A_i$ are
similar to $J'$, they
have the same Weyr
canonical form
\begin{equation*}\label{weyr1}
S^{-1}_i A_i
S_i=\begin{bmatrix}
0_{m'_{1}}&F_1'&&0\\
&0_{m'_{2}}&\ddots&\\
&&\ddots&F_{k-1}'\\
0&&&0_{m'_{k}}
\end{bmatrix},\qquad F_i':= \begin{bmatrix}
I_{m'_{i+1}}\\0
\end{bmatrix},
\end{equation*}
in which $(m'_{1},
m'_{2},\dots)$ is the
Weyr characteristic of
$J'$. Applying the
Gram--Schmidt
orthogonalisation
process to the columns
of $S_i$, we obtain a
unitary matrix
$U_i=S_iR_i$, where
$R_i$ is a nonsingular
upper-triangular
matrix. Then
\begin{equation*}\label{nilmax}
U^{-1}_i A_i
U_i=R^{-1}_i\cdot S^{-1}_i A_i
S_i\cdot R_i=
 \begin{bmatrix}
 0_{m'_{1}}&V^{(i)}_1&*&\dots&*\\
 &0_{m'_{2}}&V^{(i)}_2&\ddots&\vdots\\
 &&0_{m'_{3}}&\ddots&*\\
 &&&\ddots&V^{(i)}_{k-1}\\
 0&&&&0_{m'_{k}}
 \end{bmatrix},
\end{equation*}
in which every $V^{(i)}_j$
is an $m'_i \times
m'_{i+1}$ matrix with
linearly independent
columns.

The set of matrices $U_1,U_2,\dots$ is bounded since
each entry of a unitary matrix has modulus $\le 1$. Hence this set has a limit
point, which we denote
by $U$. Deleting some $A_i$
in \eqref{iyt} if
necessarily, we make $U_i \to
U$.  Since each $U_i$ is unitary, we have $U_iU^*_i=I$, and so $UU^*=I$. Hence $U$ is unitary and
\begin{equation*}\label{bweyr}
U^{-1}_i A_i U_i\to
U^{-1} JU=
 \begin{bmatrix}
  0_{m'_{1}}&V_1&*&\dots&*\\
 & 0_{m'_{2}}&V_2&\ddots&\vdots\\
 && 0_{m'_{3}}&\ddots&*\\
 &&&\ddots&V_{k-1}\\
 0&&&& 0_{m'_{k}}
 \end{bmatrix},
\end{equation*}
in which $V^{(i)}_1
\to V_1$, \dots,
$V^{(i)}_{k-1} \to
V_{k-1}$. Note that the columns of some $V_i$ can be linearly dependent.

Therefore,
\[
m_1=\nullity J=
\nullity U^{-1}
JU\ge  m_1'.
\]
Since
\[
U^{-1}
J^2U=\begin{bmatrix}
0_{m'_{1}}&0&V_1V_2&&0\\
&0_{m'_{2}}&0&\ddots\\
&&0_{m'_{3}}&\ddots&V_{k-2}V_{k-1}\\
&&&\ddots&0
\\ 0&&&&0_{m'_{k}}
\end{bmatrix},
\]
we have
\[
 m_1+m_2= \nullity J^2= \nullity U^{-1} J^2U \ge m_1'+m_2',
\]
and so on, which proves ``$\Longrightarrow$'' in
\eqref{kli}.
\end{proof}

\begin{proof}[Proof of Theorem \ref{jut}]
(a) Let $J=J_p(\lambda)$ and  $\langle J\rangle\le
\langle J'\rangle$.
By \eqref{kli}, $m'_1\le m_1=1$. Since $m'_1$ is the number of Jordan blocks, $J'$ is a Jordan block. Since $J$ and $J'$ have the same size, $J'=J_p(\lambda )=J$.

(b) Denote
by $(m_{1}(X),
m_{2}(X),\dots)$ the
Weyr characteristic of a matrix
$X$ and write
$s_i(X):=m_1(X)+\dots+m_i(X).$
Let $A$, $B$, and $C$ be square matrices with a single eigenvalue. Since $m_i(A\oplus
B)=m_i(A)+m_i(B)$, we have $s_i(A\oplus
B)=s_i(A)+s_i(B)$.
Thus, $s_i(A\oplus
B)\le s_i(A\oplus C)$
if and only if
$s_i(B)\le s_i(C)$. By
\eqref{kli},
\begin{equation}\label{kut}
\langle A\oplus B\rangle\le
\langle A\oplus C\rangle \quad\Longleftrightarrow\quad
\langle B\rangle\le \langle C\rangle.
\end{equation}

Let $(m_1, m_2, \dots)$ and $(\wt m_1, \wt m_2, \dots)$ be the Weyr characteristics of the matrices \eqref{aa1} and \eqref{aa2}.
Then $
\wt m_p=m_p-1$,
$\wt m_{q+1}=m_{q+1}+1$, the other $\wt m_i=m_i$,
and so
\begin{equation}\label{hsd}
\wt s_p=s_p-1,\
\wt s_{p+1}=s_{p+1}-1,\
\dots,\
\wt s_q=s_q-1,\quad\text{the
other }\wt s_i=s_i
\end{equation}
in the notation \eqref{jyr}. Let us prove the following three facts.

\smallskip

\emph{Fact 1:
$\ll J\rr<\ll J_{p,q}\rr.$}
It follows from \eqref{kut} and the inequality $\ll J_p(\lambda)\oplus
J_q(\lambda)\rr<\ll J_{p-1}(\lambda)\oplus
J_{q+1}(\lambda)\rr$, which holds by  \eqref{kli} and \eqref{hsd}.

\smallskip

\emph{Fact 2: if
$J'$ is a Jordan
matrix with the single
eigenvalue $\lambda $,
then}
\begin{equation}\label{yte}
  \ll J\rr < \ll J'\rr
\quad\Longrightarrow\quad
\ll J\rr <\ll J_{p,q}\rr  \le \ll J'\rr
\textit{ for some } p,q.
\end{equation}
Due to \eqref{kut}, it is sufficient to prove \eqref{yte} for
$J$ and
$J'$ that have no
common Jordan blocks. By the assumptions of Theorem \ref{jut}(b), $J$ has at least two Jordan blocks.
Let $p$ and $q$ be such that
\begin{equation*}\label{fde}
J=J_p(\lambda )\oplus
J_q(\lambda )\oplus Q,\qquad
p\le q,
\end{equation*}
in which all Jordan
blocks of $Q$ are of
size $\ge q$. Let us
prove that
\begin{equation*}\label{few}
J_{p,q}=J_{p-1}(\lambda)\oplus
J_{q+1}(\lambda)\oplus Q
\end{equation*}
satisfies \eqref{yte}.

By  $\ll J'\rr \ge
\ll J\rr$ and Lemma \ref{caose4}, $s'_i\le s_i$
for all $i$.
By Step 1,
$\ll J_{p,q}\rr>\ll J\rr$. We must prove that
$\ll J'\rr \ge
\ll J_{p,q}\rr$; i.e., that $s_i'\le\wt s_i$ for all $i$.  Due to
\eqref{hsd}, it
suffices to prove that
\begin{equation}\label{yre}
s_p'<s_p,\ \
s_{p+1}'<s_{p+1},\ \
\dots,\ \ s_q'<s_q.
\end{equation}

Since $J$ and
$J'$ do not have
common Jordan blocks, $J'$ does not contain $J_p(\lambda )$, and so
\begin{equation*}\label{teff}
\begin{array}{l}
  s_1=m_1=\dots=m_p>m_{p+1}\\[-6pt]
  {\text{\rotatebox{270}{$\ge$}}}\\[7pt]
  s_1'=m'_1\ge\cdots\ge m'_p=m'_{p+1}\\
\end{array}
\end{equation*}
Thus, $m_p\ge m_p'$.

If $m_p= m_p'$, then
\[
\begin{array}{l}
  s_1=m_1=\dots=m_p>m_{p+1}\\[-6pt]
  {\text{\rotatebox{270}{$=$}}}\\[7pt]
  s_1'=m'_1=\cdots= m'_p=m'_{p+1}\\
\end{array}
\]
Hence, $s_1=s_1'$, $s_2=s_2'$, \dots, $s_p=s_p'$, $s_{p+1}=s_p+m_{p+1}<s_p'+m_{p+1}'=s_{p+1}'$,
which contradicts $s_{p+1}\ge s_{p+1}'$.

Therefore, $m_p> m_p'$,
$s_p=s_{p-1}+m_p>s_{p-1}'+m_p'=s_p'$, and so $s_p>s_p'$, which proves \eqref{yre} if $p=q$.

Let $p<q$. Then $J$ has only one
$J_p(\lambda )$, which means that $m_p=m_{p+1}+1$. Since $m_p> m_p'$, we have $m_p-1\ge m_p'$, and so
\[
\begin{array}{l}
\hspace{-17pt}  m_p-1=m_{p+1}=m_{p+2}=\dots=m_q\\[-6pt]
  {\text{\rotatebox{270}{$\ge$}}}\\[7pt]
  m_p'\ =m'_{p+1}\ge m'_{p+2}\ge\cdots\ge m'_q\\
\end{array}
\]
We obtain consistently $s_p>s_p'$, $s_{p+1}=s_p+m_{p+1}>s_p'+m'_{p+1}= s_{p+1}'$, \dots, $s_q=s_{q-1}+m_q>s_{q-1}'+m_q'= s_q'$, which proves \eqref{yre} if $p<q$.

\smallskip

\emph{Fact 3: if $J'$ is a Jordan
matrix with the single
eigenvalue $\lambda $, then}
\begin{equation*}\label{ytr}
\ll J\rr < \ll
J'\rr\le
\ll J_{p,q}\rr
\quad\Longrightarrow\quad
J'=J_{p,q}
\end{equation*}
\emph{up to permutations of Jordan blocks in $J'$.}

On the contrary, let
$\ll J\rr<\ll
J'\rr<
\ll J_{p,q}\rr$ for some $J'$. By Fact 2, we can take $J'=J_{p',q'}$ for some $p'\le q'$.

Write $t(J):=(t_1,t_2,\dots)$, in which $t_i$ is the number of $i\times i$ Jordan blocks in $J$. Then $s(J):=t_1+t_2+\cdots$ is the number of Jordan blocks in $J$.

Let $u=(u_1,\dots,u_s)$ and $u=(v_1,\dots,v_s)$ be two sequences of nonnegative integers.
Define two lexicographical orders:
\begin{align*}
u\stackrel{l}{\prec}v\quad&\text{if }u_1=v_1,\ \dots, \ u_{k-1}=v_{k-1},\ u_k<v_k\text{ for some }k\ge 1;\\
u\stackrel{r}{\prec}v\quad&\text{if }u_k<v_k,\ u_{k+1}=v_{k+1},\ u_{k+2}=v_{k+2},\ \dots\text{ for some }k\ge 1.
\end{align*}

By Fact 2, the inequality $\ll
J_{p',q'}\rr<\ll J_{p,q}\rr$ implies that $J_{p,q}$ is obtained from $J_{p',q'}$ by a sequence of replacements of type $J\!\!\re\!\!\! J_{s,r}$:
\begin{equation}\label{x3w}
J_{p',q'}\re (J_{p',q'})_{r_1,s_1}\re
((J_{p',q'})_{r_1,s_1})_{r_2,s_2}\re\cdots\re
J_{p,q}.
\end{equation}
Therefore,
\begin{itemize}
  \item[(i)] $s(J_{p',q'})\ge s(J_{p,q})$,
  \item[(ii)]
  if $s(J_{p',q'})=s(J_{p,q})$, then $t(J_{p',q'})\stackrel{l}{\preceq} t(J_{p,q})$, and
  \item[(iii)] $t(J_{p',q'})\stackrel{r}{\succeq} t(J_{p,q})$
\end{itemize}
since the analogous statements hold for each of the replacements \eqref{x3w}.

Let $s(J_{p',q'})> s(J_{p,q})$. Then
$J=J_1(\lambda )\oplus\cdots$ and $p=1$.
Hence $q\le p'$, and so $t(J_{p',q'})\stackrel{r}{\prec} t(J_{p,q})$, which contradicts (iii).

Thus, $s(J_{p',q'})=s(J_{p,q})$. If $p'<p$, then (ii) does not hold. If $q'>q$, then (iii) does not hold. Hence, $p\le p'\le q'\le q$, which contradicts with $(p',q')\ne (p,q)$.
\end{proof}

The following theorem ensures Theorem III(v).

\begin{theorem}      \label{kur}
{\rm(a)}
All matrices in a sufficiently small neighborhood of
\begin{equation*}\label{moe}
J_m(\lambda)\oplus
J_n(\lambda),\qquad m\le n
\end{equation*}
are similar to matrices
of the form
\begin{equation}\label{cfq}
J_{m-r}(\lambda)
\oplus J_{n+r}(\lambda),\qquad 0\le r\le m.
\end{equation}

{\rm(b)} Each matrix \eqref{cfq} with $r>0$ is similar to
\begin{equation}\label{tgt}
\mat{J_m(\lambda )&\Delta_r(\varepsilon )^T\\0&J_n(\lambda )},
\end{equation}
in which $\Delta_r(\varepsilon )$ is defined in \eqref{rdpd}, and $\varepsilon$ is an arbitrary nonzero complex number.
\end{theorem}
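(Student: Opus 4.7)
The plan is to split the theorem into its two assertions and prove them by distinct methods: part~(a) by an order-theoretic argument based on Lemma~\ref{caose4}, and part~(b) by exhibiting two explicit Jordan chains in the perturbed matrix~\eqref{tgt}.

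For part~(a), I would first reduce via the shift $A\mapsto A-\lambda I$ to the nilpotent case $\lambda=0$. Let $J'$ be a Jordan matrix such that every neighborhood of $J_m(0)\oplus J_n(0)$ contains a matrix similar to~$J'$; by continuity of the characteristic polynomial, $J'$ is nilpotent of size $m+n$, and by definition $\langle J_m\oplus J_n\rangle\le\langle J'\rangle$. Lemma~\ref{caose4} then gives $s'_i\le s_i$ for every~$i$, where $(m_1,m_2,\dots)$ is the Weyr characteristic of $J_m(0)\oplus J_n(0)$, namely $m$ twos followed by $n-m$ ones. The case $i=1$ forces $m'_1\le 2$, so $J'$ has at most two Jordan blocks. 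Writing $J'=J_a(0)\oplus J_b(0)$ with $a\le b$ and $a+b=m+n$ (the degenerate case $a=0$ yielding a single block $J_{m+n}$), the binding inequality $s'_{m+1}\le s_{m+1}$ reads $a+m+1\le 2m+1$, i.e., $a\le m$; all remaining inequalities hold automatically. Setting $r:=m-a$ gives $J'=J_{m-r}(0)\oplus J_{n+r}(0)$ with $0\le r\le m$, as required.

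For part~(b), I would denote by $M$ the matrix in~\eqref{tgt}, set $N:=M-\lambda I$, and label the standard basis as $e_1,\dots,e_m,f_1,\dots,f_n$ according to the two diagonal blocks. The block structure of $N$ yields at once
\[
Ne_1=0,\quad Ne_i=e_{i-1}\ (i\ge 2),\quad Nf_1=\varepsilon e_r,\quad Nf_j=f_{j-1}\ (j\ge 2).
\]
Iterating $N$ from the seed $f_n$ produces the chain $f_n,f_{n-1},\dots,f_1,\varepsilon e_r,\dots,\varepsilon e_1$ of length $n+r$, terminating at $N^{n+r}f_n=0$; these vectors span the $N$-invariant subspace $U:=\operatorname{span}(f_1,\dots,f_n,e_1,\dots,e_r)$ on which $N$ is cyclic nilpotent of index $n+r$, so $N|_U\sim J_{n+r}(0)$. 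The quotient $V/U$ has dimension $m-r$, with basis the cosets of $e_{r+1},\dots,e_m$; since $Ne_{r+1}=e_r\in U$, the induced operator $\bar N$ satisfies $\bar N\bar e_{r+1}=0$ and $\bar N\bar e_i=\bar e_{i-1}$ for $i>r+1$, so $\bar N\sim J_{m-r}(0)$. Combining the two cyclic pieces gives $N\sim J_{n+r}(0)\oplus J_{m-r}(0)$, i.e., $M\sim J_{m-r}(\lambda)\oplus J_{n+r}(\lambda)$.

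The only genuine checks are that the chain from~$f_n$ really has length $n+r$ (immediate from the explicit iterates and the identity $N(\varepsilon e_1)=0$) and that the cosets $\bar e_{r+1},\dots,\bar e_m$ are linearly independent in~$V/U$ (immediate from the explicit basis of~$U$). No serious technical obstacle arises; the whole argument reduces to the termination identity $N^{n+r}f_n=0$ and the routine quotient analysis.
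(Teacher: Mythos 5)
Your argument for part~(a) is correct and is essentially the paper's approach, only slightly more explicit: the paper cites Theorem~\ref{jut}(b), which itself rests on Lemma~\ref{caose4}, whereas you apply Lemma~\ref{caose4} directly and check the partial-sum inequalities $s'_i\le s_i$ by hand. The Weyr characteristic of $J_m(0)\oplus J_n(0)$ is $(2,\dots,2,1,\dots,1)$ as you say, the bound $m'_1\le 2$ restricts $J'$ to at most two blocks, and the remaining inequalities amount to $a\le m$; all correct.

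Part~(b) takes a genuinely different route from the paper, which eliminates the off-diagonal entry by an explicit sequence of row and column similarity operations. Exhibiting Jordan chains is a natural alternative, and the explicit computations you carry out (the chain of length $n+r$ from $f_n$, and the induced cyclic operator on $V/U$) are correct. However, the final step --- ``Combining the two cyclic pieces gives $N\sim J_{n+r}(0)\oplus J_{m-r}(0)$'' --- does not follow from what you have shown. An $N$-invariant subspace $U$ on which $N$ is a single Jordan block, together with a cyclic induced operator on $V/U$, does \emph{not} determine the Jordan type of $N$: for $N=J_3(0)$ and $U=\operatorname{span}(e_1,e_2)$ one has $N|_U\sim J_2(0)$ and $\bar N\sim J_1(0)$, yet $N\not\sim J_2(0)\oplus J_1(0)$. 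In module-theoretic terms, the short exact sequence $0\to U\to V\to V/U\to 0$ of $\mathbb C[x]$-modules need not split. To close the gap you need one additional observation, and either of the following is short. One option: compute $\ker N=\operatorname{span}\bigl(e_1,\ f_1-\varepsilon e_{r+1}\bigr)$ when $r<m$ (two-dimensional, so $N$ has exactly two Jordan blocks), and note that $N^{n+r}=0$ while $N^{n+r-1}f_n=\varepsilon e_1\ne 0$, so the block sizes are forced to be $n+r$ and $m+n-(n+r)=m-r$. Another option: lift the quotient chain to an honest invariant complement by choosing the representative $w:=e_m-\varepsilon^{-1}f_{m-r}$ of $\bar e_m$ (well-defined since $1\le m-r\le n$); then $N^{m-r}w=e_r-\varepsilon^{-1}\cdot\varepsilon e_r=0$, so $w,Nw,\dots,N^{m-r-1}w$ span an $N$-invariant complement of $U$, and the direct-sum decomposition follows. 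With either repair the argument is complete.
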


\begin{proof}
(a) This statement follows from Theorem \ref{jut}(b).
\medskip

(b) We make $\varepsilon=1$ in \eqref{tgt} preserving the other entries by the following similarity transformation: we divide by $\varepsilon$ the first horizontal strip, then multiply by $\varepsilon$ the first vertical strip.  In the obtained matrix
\begin{equation}\label{12c}
\begin{MAT}(@)[1.5pt]{ccccccccccccccl}
 &&\vphantom{A_A}&\!\!\!\mathit{\scriptstyle r+1}\!\!\!\!\!\!& &&{\scriptstyle m}&\mathit{\scriptstyle 1}&&&\!\!\!\!\!\!{\scriptstyle m-r}\!\!\!&&&&\\ 
 \,\lambda &1&&& &&&\vdots&&&&&&&\mathit{\scriptstyle 1}\\ 
 &\ddotc&\ddotc &&&&&0&&&&&&&\\ 
  &&\lambda &1 &&&&1&&\vphantom{A_A}
  &&&&&{\,\scriptstyle r}\\ 
  &&&\lambda &1 &&&0&&&&&&&\mathit{\,\scriptstyle r+1}\!\!\!\!\!\!\!\!\\   
&&&&\lambda &\ddotc &&0&&&&&&&\\ 
&&&&&\ddotc&1&\vdots&&&&&&&\\  
 &&&&&&\lambda &0&&\vphantom{A_A}
 &&&&&{\,\scriptstyle m}\\  
 &&&0&{\ze}&&&\lambda &1&&&&&&\mathit{\scriptstyle 1}\\   
 &&&&0&\ddotc&&&\lambda &\ddotc&\vphantom{\ze}
&
&
&&\\  
&&&&&\ddotc&\ze&&&\ddotc&1&&&&\\  
 &&&&&&0&&\vphantom{A_A}
 &&\lambda &1&&&{\,\scriptstyle m-r}\!\!\!\!\!\!\!\!\\
 &&&&&&&&&&&\lambda &\ddotc&&\\
  &&&&&&&&&&&&\ddotc&1&\\
 &&&&&&&
 &
 &&&&&\lambda &{\,\scriptstyle n}
 \addpath{(0,0,4)rrrrrrrrrrrrrruuuuuuuuuuuuuu%
 lllllllllllllldddddddddddddd}
 \addpath{(3,3,4)rrrrrrrruuuullllllll}
 \addpath{(3,3,4)uuuuuuuurrrrdddddddd}
 \addpath{(0,7,2)rrrrrrrrrrrrrrr}
\addpath{(7,0,2)uuuuuuuuuuuuuuu}
\addpath{(3,11,.)uuu}
\addpath{(11,7,.)uuuuuuu}
\addpath{(7,11,.)rrrrrrr}
\addpath{(11,3,.)rrr}
 \\
 \end{MAT}
\end{equation}
we make zero the entry ``1'' to the left of $\varepsilon=1$ by the following similarity transformations (every $\ze$
denotes the zero entry that first is transformed to $-1$ and then is restored to $0$; compare with  \eqref{uuu}):
\begin{itemize}
  \item Make zero the entry ``1'' to the left of $\varepsilon=1$ by subtracting columns $1,2,\dots,m-r$ of the second vertical strip from columns $r+1, r+2,\dots, m$ of the first vertical strip, respectively. Thus, the marked $(m-r)\times(m-r)$ subblock in the $(2,2)$th block of the matrix \eqref{12c} is subtracted from the marked $(m-r)\times(m-r)$ subblock in the $(2,1)$th block.

  \item Make the inverse transformations of rows,      adding rows $r+1,\dots,m$  of the first horizontal strip to rows $1,\dots,m-r$ of the second horizontal strip. Thus, the $(m-r)\times(m-r)$ subblock in the $(1,1)$th block is added to the $(m-r)\times(m-r)$ subblock in the $(2,1)$th block, restoring it.
\end{itemize}
The $(m-r)\times(m-r)$ marked subblock in the $(1,1)$th block of the obtained matrix is a direct summand, and so the obtained matrix is permutation similar to \eqref{cfq}.
\end{proof}

\section{Theorem II follows from Theorem I}\label{sdsx}

Theorem II is formulated in terms of coin moves and proved sketchily by Edelman, Elmroth, and K\r{a}gstr\"{o}m \cite[Theorem 3.2]{kag2}.
In this section  (which can be read independently of Sections \ref{prel}--\ref{jord}), we give a detailed proof of Theorem II, deriving it from Theorem I.  It is sufficient to prove the following statement:
\begin{equation}\label{aam}
\parbox[c]{0.84\textwidth}{\it Let a Kronecker pair $\mc B$ be obtained from a Kronecker pair  $\mc A$ by a replacement {\rm(j)} from Theorem I, where $\text{\rm j}\in\{\text{\rm i},\text{\rm ii},\dots,\text{\rm vi}\}$. Then $\ll\mc B\rr$ immediately succeeds $\ll\mc A\rr$ if and only if\/ {\rm(j)} is the
replacement {\rm(j$'$)} from Theorem II.}
\end{equation}

Let us show that \eqref{aam} holds for the pair $\mc A$ given in \eqref{hwm}.
\medskip

\emph{Case 1:
{\rm (j)} is the replacement {\rm(i)}:}
\begin{equation}\label{eex}
\mc L_{n_i} \oplus
\mc L_{n_j} \re
\mc L_{n_i+1} \oplus
\mc L_{n_j-1}
,\quad \text{in which }n_i+2\le n_j.
\end{equation}

${\Longrightarrow}$.
Let $\ll\mc B\rr$ immediately succeed $\ll\mc A\rr$. We must prove that \eqref{eex} is the replacement (i$'$). To the contrary,
let $i+2\le  j$, $n_i<n_{i+1}<n_j$, and $n_i+3\le n_j$. If $n_i+2\le n_{i+1}$, then \eqref{eex} is the following composition of replacements of type (i):
\[\mc L_{n_i} \oplus \mc L_{n_{i+1}} \oplus
\mc L_{n_j}
               \re
\mc L_{n_i+1}
 \oplus \mc L_{n_{i+1}-1}
\oplus
\mc L_{n_j}
               \re
               \mc L_{n_i+1}
 \oplus \mc L_{n_{i+1}}
\oplus
\mc L_{{n_j}-1}.
 \]
By Theorem I,
\[\ll\mc L_{n_i} \oplus \mc L_{n_{i+1}} \oplus
\mc L_{n_j}\rr
               <
\ll\mc L_{n_i+1}
 \oplus \mc L_{n_{i+1}-1}
\oplus
\mc L_{n_j}\rr
               <
\ll\mc L_{n_i+1}
 \oplus \mc L_{n_{i+1}}
\oplus
\mc L_{{n_j}-1}\rr,
 \]
and so $\ll\mc B\rr$ is not an immediate successor of $\ll\mc A\rr$.
If $n_i+1= n_{i+1}$, then $n_{i+1}+2\le n_j$ and \eqref{eex} is the following composition of replacements of type (i):
\[\mc L_{n_i} \oplus \mc L_{n_{i+1}} \oplus
\mc L_{n_j}
               \re
\mc L_{n_i}
 \oplus \mc L_{n_{i+1}+1}
\oplus
\mc L_{n_j-1}
               \re
               \mc L_{n_i+1}
 \oplus \mc L_{n_{i+1}}
\oplus
\mc L_{{n_j}-1}.
 \]
Thus, $\ll\mc B\rr$ is not an immediate successor of $\ll\mc A\rr$ too.

${\Longleftarrow}$.
Let $\mc B$ be obtained from $\mc A$ by replacement (i$'$). Let $\mc B$ can be also obtained from $\mc A$ by a sequence
\begin{equation*}\label{vbv}
\mc A=\mc A_1 \stackrel{\varphi_1}{\longmapsto}
\mc A_2
\stackrel{\varphi_2}{\longmapsto}
\mc A_3
\stackrel{\varphi_3}{\longmapsto}
\cdots
\stackrel{\varphi_p}{\longmapsto}
\mc A_{p+1}=\mc B
\end{equation*}
of replacements of types (i)--(vi). In order to show that $\ll\mc B\rr$ is an immediate successor of $\ll\mc A\rr$, we must prove that $p=1$.

All replacements $\varphi _1,\dots,\varphi _p$ are not of
\begin{itemize}
  \item type (vi) since $\mc A$ and $\mc B$ have the same number $\underline s$ of summands $\mc L_i^T$, but (vi) decreases the number
      $\underline s$ and this number cannot be restored by (i)--(v);

  \item type (iv) since it increases the number $m_1+\dots+m_{\underline s}$ whereas this number is not changed by (i), (ii), (iii), and (v);
  \item type (iii) since it increases $n_1+\dots+n_{\tilde s}$;

\item type (v) with $\lambda =\lambda _i$ since it increases $\sum_{p<q}(k_{iq}-k_{ip})$ whereas this number is not changed by (i) and (ii);

\item type (ii) since it decreases $\sum_{i<j}(m_{j}-m_{i})$.
\end{itemize}
Therefore, all $\varphi _1,\dots,\varphi _p$ are replacements of type (i). Since each replacement (i$'$) is not the composition of several replacements of type (i), $p=1$, and so $\ll\mc B\rr$ is an immediate successor of $\ll\mc A\rr$.
We have proved \eqref{aam} in Case 1.
\medskip

\emph{Case 2:
{\rm (j)} is the replacement {\rm(ii)}.} The statement \eqref{aam} is proved in this case by transposing the matrices in Case 1.
\medskip

\emph{Case 3:
{\rm (j)} is the replacement {\rm(iii)}:}
\begin{equation}\label{eex3}
\mc L_{n}\oplus {\cal D}_k(\lambda_i) \re
\mc L_{n+1} \oplus \mc D_{k-1}(\lambda_i ),
\end{equation}
in which $(n,k)=(n_l,k_{ij})$ for some $l$, $i$, and $j$.

${\Longrightarrow}$.
To the contrary, suppose that \eqref{eex3} is not (iii$'$); that is,
$n<n_{\up s}$ or $k<k_{is_i}$. If $n<n_{\up s}$, then \eqref{eex3} is the composition of replacements of types (i) and (iii):
\[\mc L_n\oplus \mc L_{n_{\up s}}\oplus {\cal D}_k(\lambda_i )
               \re
 \mc L_n\oplus \mc L_{n_{\up s}+1}\oplus {\cal D}_{k-1}(\lambda_i )
               \re
\mc L_{n+1}\oplus \mc L_{n_{\up s}}\oplus {\cal D}_{k-1}(\lambda_i ).
 \]
If $k<k_{is_i}$, then
\begin{align*}
\mc L_{n}\oplus {\cal D}_k(\lambda_i)
\oplus {\cal D}_{k_{is_i}}(\lambda_i)
&\re
\mc L_{n+1} \oplus \mc D_{k}(\lambda_i )
\oplus {\cal D}_{k_{is_i}-1}(\lambda_i)
\\
&\re
\mc L_{n+1} \oplus \mc D_{k-1}(\lambda_i )
\oplus {\cal D}_{k_{is_i}}(\lambda_i).
\end{align*}
By Theorem I,
$\ll\mc B\rr$ is not an immediate successor of $\ll\mc A\rr$.

${\Longleftarrow}$.
Let $\mc B$ be obtained from $\mc A$ by replacement (iii$'$). Let
$\mc B$ can be also obtained from $\mc A$ by a sequence $\mc A=\mc A_1 \stackrel{\varphi_1}{\longmapsto}
\mc A_2
\stackrel{\varphi_2}{\longmapsto}
\cdots
\stackrel{\varphi_p}{\longmapsto}
\mc A_{p+1}=\mc B
$
of replacements of types (i)--(vi).

All replacements $\varphi _1,\dots,\varphi _p$ are not of
\begin{itemize}
  \item type (vi) since it decreases the number $\un s$;

  \item type (i) since it increases lexicographically $(n_1,n_2,\dots,n_{\un {s}})$;

  \item types (ii) and (iv) since they change the sequence $(m_1,m_2,\dots,m_{\up s})$;

\item type (v) with $\lambda =\lambda _l$ since it decreases lexicographically $(k_{i1},k_{i2},\dots,k_{is_i})$.
\end{itemize}
Therefore, all $\varphi _1,\dots,\varphi _p$ are of type (iii). Since each replacement (iii$'$) is not the composition of several replacements of type (iii), $p=1$, and so $\ll\mc B\rr$ immediately succeeds $\ll\mc A\rr$.
\medskip

\emph{Case 4:
{\rm (j)} is the replacement {\rm(iv)}.} The statement  \eqref{aam} is proved in this case by transposing the matrices in Case 3.
\medskip

\emph{Case 5:
{\rm (j)} is the replacement {\rm(v)}:}
\begin{equation}\label{eex5}
\mc D_{k_{ij}}(\lambda_i)\oplus\mc D_{k_{il}}(\lambda_i)\re\mc D_{k_{ij}-1}(\lambda_i)
\oplus \mc D_{k_{il}+1}(\lambda_i),\quad\text{in which }j<l .
\end{equation}\\[-30pt]

${\Longrightarrow}$.
To the contrary, suppose that \eqref{eex5} is not (v$'$); that is, $k_{ij}<k_{i,j+1}<k_{il}$.
Then
\begin{align*}
\mc D_{k_{ij}}(\lambda_i)\oplus\mc D_{k_{i,j+1}}(\lambda_i)\oplus\mc D_{k_{il}}(\lambda_i)&\re
\mc D_{k_{ij}-1}(\lambda_i)\oplus\mc D_{k_{i,j+1}+1}(\lambda_i)\oplus\mc D_{k_{il}}(\lambda_i)
\\
&\re \mc D_{k_{ij}-1}(\lambda_i)\oplus\mc D_{k_{i,j+1}}(\lambda_i)\oplus\mc D_{k_{il}+1}(\lambda_i).
\end{align*}
By Theorem I,
$\ll\mc B\rr$ is not an immediate successor of $\ll\mc A\rr$.

${\Longleftarrow}$.
Let $\mc B$ be obtained from $\mc A$ by replacement (v$'$), and let
$\mc B$ can be also obtained from $\mc A$ by a sequence $\mc A=\mc A_1 \stackrel{\varphi_1}{\longmapsto}
\mc A_2
\stackrel{\varphi_2}{\longmapsto}
\cdots
\stackrel{\varphi_p}{\longmapsto}
\mc A_{p+1}=\mc B
$
of replacements of types (i)--(vi).
All replacements $\varphi _1,\dots,\varphi _p$ are not of types (i)--(iv) and (vi) since they change $n_1,n_2,\dots,n_{\up s}$.

Therefore, all $\varphi _1,\dots,\varphi _p$ are of type (v). Since each replacement (v$'$) is not the composition of several replacements of type (v), $p=1$, and so $\ll\mc B\rr$  immediately succeeds $\ll\mc A\rr$.
\medskip

\emph{Case 6:
{\rm (j)} is the replacement {\rm(vi)}:}
\begin{equation}\label{eex6}
\mc L_{n_i}\oplus
\mc L^T_{m_j} \re
{\cal D}_{r_1} (\mu_1)\oplus\dots\oplus {\cal D}_{r_q}(\mu_q),
\end{equation}
in which $\mu_1,\dots,\mu_q\in{\mathbb C}\cup\infty$ are distinct and $r_1+\dots +r_q=n_i+m_j-1$.

${\Longrightarrow}$.
To the contrary, suppose that \eqref{eex6} is not (vi$'$).

If $n_i<n_{\up s}$, then
\begin{equation*}\label{cxl}
\begin{aligned}
\mc L_{n_i}\oplus \mc L_{n_{\up s}}\oplus
\mc L^T_{m_j}
 &\re
\mc L_{n_i}\oplus {\cal D}_{r_1+n_{\up s}-n_i} (\mu_1)\oplus\dots\oplus {\cal D}_{r_q}(\mu_q)
\\
                &\re
 \mc L_{n_{\up s}}\oplus{\cal D}_{r_1} (\mu_1)\oplus\dots\oplus {\cal D}_{r_q}(\mu_q),
\end{aligned}
\end{equation*}
and so $\ll\mc B\rr$ is not an immediate successor of $\ll\mc A\rr$. Hence $n_i=n_{\up s}$ and, analogously, $m_j=m_{\un s}$.

If some $\lambda _i\notin\{\mu _1,\dots,\mu _q\}$, then
\begin{align*}
\mc L_{n_{\up s}}\oplus
\mc L^T_{m_{\un s}}\oplus {\cal D}_{k_{i1}}(\lambda_i )
               &\re
               \mc L_{n_{\up s}+k_{i1}}\oplus
\mc L^T_{m_{\un s}}\\
                &\re
{\cal D}_{r_1} (\mu_1)\oplus\dots\oplus {\cal D}_{r_q}(\mu_q)
\oplus {\cal D}_{k_{i1}}(\lambda_i ),
\end{align*}
and so $\ll\mc B\rr$ is not an immediate successor of $\ll\mc A\rr$. Hence $q\ge t$ and we can rearrange  $\mu_1,\dots,\mu_q $ such that   $\mu_1=\lambda _1,\dots,\mu_t=\lambda _t$.

Let $r_i< k_{is_i}$ for some $i$; for definiteness, for $i=1$. Then $\mu_1=\lambda _1$,
\begin{align*}
\mc L_{n_{\up s}}\oplus
\mc L^T_{m_{\un s}}\oplus {\cal D}_{k_{1s_1}}(\mu_1)
               &\re
\mc L_{n_{\up s}+k_{1s_1}-r_1}\oplus
\mc L^T_{m_{\un s}}\oplus {\cal D}_{r_1}(\mu_1)
\\
                &\re
{\cal D}_{r_2} (\mu_2)\oplus\dots\oplus {\cal D}_{r_q}(\mu_q)
\oplus{\cal D}_{k_{1s_1}}(\mu_1)\oplus {\cal D}_{r_1}(\mu_1),
\end{align*}
and so $\ll\mc B\rr$ is not an immediate successor of $\ll\mc A\rr$. Hence, $r_1\ge  k_{1s_1}$, \dots, $r_t\ge  k_{ts_t}$.

${\Longleftarrow}$.
Let $\mc B$ be obtained from $\mc A$ by a replacement
\begin{equation}\label{tvt}
\varphi :\
\mc L_{n_{\up s}}\oplus
\mc L^T_{m_{\un s}} \re
{\mc D}_{r_1} (\mu_1)\oplus\dots\oplus {\mc D}_{r_q}(\mu_q),\qquad q\ge t
\end{equation}
of type (vi$'$); that is,
$\mu_1=\lambda _1$,\,\dots,\,$\mu_t=\lambda _t$, and $k_{1s_1}\le r_1$,\,\dots,\,$ k_{ts_t}\le r_t$.

Let $\mc B$ can be also obtained from $\mc A$ by a sequence
$\mc A=\mc A_1 \stackrel{\varphi_1}{\longmapsto}
\mc A_2
\stackrel{\varphi_2}{\longmapsto}
\cdots
\stackrel{\varphi_p}{\longmapsto}
\mc A_{p+1}=\mc B
$ of replacements of types (i)--(vi).
Exactly one replacement $\varphi _u:\mc A_u\to \mc A_{u+1}$ is of type (vi) since $\varphi$ decreases $\up s$ by one. The preceding replacements $\varphi_1,\dots,\varphi_{u-1}$ of types (i)--(v) do not change $\up s$ and $\un s$.
Let
  \begin{gather*}
\mc A':=\mc A_u=\bigoplus_{i=1}^{\un s}\mc L^T_{m'_i}\oplus\bigoplus_{i=1}^{\up s}
\mc L_{n'_i}\oplus
\bigoplus_{i=1}^{t'}
\Big(\mc D_{k'_{i1}}(\lambda_i)\oplus\dots\oplus
\mc D_{k'_{is'_i}}(\lambda_i)\Big),
                           \\
m'_1\le\cdots\le m'_{\underline s}, \ \
n'_1\le\cdots\le n'_{\overline s},\ \
k'_{i1}\le\cdots\le k'_{is'_{i}}\ (i=1,\dots,t'),\ \ t'\le t.
  \end{gather*}
We can suppose that $\varphi _u$ is not a product of replacements. Then  $\varphi _u$  is of type (iv$'$) due to part ``${\Longrightarrow}$''; that is,
\begin{equation*}\label{een}
\varphi _u:\
\mc L^T_{m'_{\un s}}\oplus
\mc L_{n'_{\up s}} \re
{\cal D}_{\rho_1} (\nu_1)\oplus\dots\oplus {\cal D}_{\rho_{q'}}(\nu_{q'}), \qquad q'\ge t',
\end{equation*}
in which $\nu_1=\lambda _1$,\,\dots,\,$\nu_{t'}=\lambda _{t'}$, and $k_{1s_1}\le \rho_1$,\,\dots,\,$ k_{t's_{t'}}\le \rho_{t'}$.

If $m'_{\un s}>m_{\un s}$, then $m_{\un s}$ has been increased by some $\varphi _l$ with $l<u$ of type (iv). However, this $\varphi _l$ decreases $\sum_{i,j} k_{ij}$, which cannot be restored because of the condition $k_{1s_1}\le r_1$,\,\dots,\,$ k_{ts_t}\le r_t$. Hence $m'_{\un s}\le m_{\un s}$ and, analogously, $n'_{\up s}\le n_{\up s}$.

If $m'_{\un s}< m_{\un s}$, then $\sum_{i,j}k'_{ij}+\sum_i\rho_i<
\sum_{i,j}k_{ij}+\sum_ir_i $ and this inequality cannot be transformed to the equality by replacements $\varphi_{u+1},\dots,\varphi_p$ of types (i)--(v).
Hence $m'_{\un s}=m_{\un s}$ and, analogously, $n'_{\up s}=n_{\up s}$.

If $\rho _1<r_1$, then
$$k'_{11}+\dots+k'_{1s'_1}+\rho_1
<k_{11}+\dots+k_{1s_1}+r_1,$$
and this inequality cannot be transformed to the equality by replacements $\varphi_{u+1},\dots,\varphi_p$ of types (i)--(v). Hence $\rho_1\ge r_1$ and, analogously, $\rho_i\ge r_i$ for all $i$. Using $m'_{\un s}=m_{\un s}$ and $n'_{\up s}=n_{\up s}$, we find that $t'=t$ and $\rho_i= r_i$ for all $i$. Therefore, $\varphi _u$ is the replacement $\varphi $ from \eqref{tvt}. It is easy to check that $u=p=1$ and $\varphi _1=\varphi$.

\end{document}